\documentclass[12pt,a4paper,english]{amsart}
\newif\ifdisplaytable
\displaytabletrue 
    
\title{Cuspidal character sheaves on graded Lie algebras II}
\author{Wille Liu}\address{Institute of Mathematics, Academia Sinica, 7F, Astronomy-Mathematics Building, No. 1, Sec. 4, Roosevelt Road, Taipei, Taiwan}\email{wliu@gate.sinica.edu.tw}

\author{Kari Vilonen}\address{School of Mathematics and Statistics, University of Melbourne, VIC 3010, Australia, also Department of Mathematics and Statistics, University of Helsinki, Helsinki, Finland}
\email{kari.vilonen@unimelb.edu.au, kari.vilonen@helsinki.fi}
\thanks{KV was supported in part by the ARC grants  FL200100141, DP250100824 and the Academy of Finland}

\author{Ting Xue}
\address{School of Mathematics and Statistics, University of Melbourne, VIC 3010, Australia, also Department of Mathematics and Statistics, University of Helsinki, Helsinki, Finland} 
\email{ting.xue@unimelb.edu.au}
\thanks{TX was supported in part by the ARC grant  DP250100824}
\date{}

\voffset=-0.05\textheight \textheight=1.1\textheight
\hoffset=-0.15\textwidth \textwidth=1.3\textwidth
    
\usepackage{etoolbox}
\usepackage{nicematrix}
\usepackage[inline]{enumitem}

\usepackage{longtable}
\usepackage{aliascnt}
\usepackage{amsmath}
\usepackage{amscd}
\usepackage{amssymb}
\usepackage{amsfonts}
\usepackage{amsthm}
\usepackage{bbm}
\usepackage{cancel}
\usepackage{color}
\usepackage{eucal}
\usepackage{pdfsync}
\usepackage[all,cmtip]{xy}
\usepackage{graphicx}
\usepackage{graphics}
\usepackage[hidelinks]{hyperref}
\usepackage{latexsym}
\usepackage{mathrsfs}
\usepackage{placeins}
\usepackage{bookmark}
\usepackage{mathtools}
\usepackage{stmaryrd}
\usepackage{url}
\usepackage{mathabx}
\usepackage{float}

\usepackage{dynkin-diagrams}
\usepackage{multirow}
\usepackage{cellspace}

\usepackage{xcolor}
\usepackage{tikz-cd}
\usepackage[normalem]{ulem}
\usepackage{manfnt}
\usepackage{svg}

\usepackage{leftindex}
\newcommand{\li}{\leftindex[I]}

  \usepackage{mathrsfs}

  \usepackage{accents}

\usepackage[backend=biber,doi=false,url=false,isbn=false,url=false,clearlang,giveninits=true,style=alphabetic,maxbibnames=99,maxalphanames=6]{biblatex} 
\DeclareRedundantLanguages{english}{english}
\DeclareSourcemap{
  \maps{
    \map{
      \step[fieldsource=language, fieldset=langid, origfieldval, final]
      \step[fieldset=language, null]
    }
  }
}

\DeclareFieldFormat[article]{pages}{#1}
\DeclareFieldFormat[article]{page}{#1}
\DeclareFieldFormat[article]{volume}{\textbf{#1}}
\DeclareFieldFormat[article]{number}{no.~#1}

\DeclareFieldFormat[book]{volume}{\textbf{#1}}
\renewbibmacro{in:}{%
  \ifentrytype{article}{}{\printtext{\bibstring{in}\intitlepunct}}}

\renewbibmacro*{volume+number+eid}{%
  \printfield{volume}%
  \setunit*{\addcomma\space}%
  \printfield{number}%
  \setunit{\addcomma\space}%
  \printfield{eid}}
  
\addbibresource{main.bib} 

\numberwithin{equation}{section}

    \newcommand{\N}{\mathbb{N}}
    \newcommand{\Z}{\mathbb{Z}}
    \newcommand{\bZ}{\mathbb{Z}}
    \newcommand{\Q}{\mathbb{Q}}
    \newcommand{\R}{\mathbb{R}}
    \newcommand{\C}{\mathbb{C}}
    
    \newcommand{\Cc}{\mathbb{C}^{\times}}

    \newcommand{\cC}{\mathcal{C}}
    \newcommand{\cE}{\mathcal{E}}
    \newcommand{\cF}{\mathcal{F}}

    \newcommand{\cS}{\mathcal{S}}

    \newcommand{\cZ}{\mathcal{Z}}
    \newcommand{\cO}{\mathcal{O}}
    \newcommand{\bA}{\mathbf{A}}
    \newcommand{\bB}{\mathbf{B}}
    \newcommand{\bC}{\mathbf{C}}
    \newcommand{\bD}{\mathbf{D}}

    \newcommand{\bX}{\mathbf{X}}
    \newcommand{\bd}{\mathbf{d}}
    \newcommand{\bs}{\mathbf{s}}

    \mathchardef\mhyphen="2D
    
    \newcommand{\iiA}{\li^2{\mathbf{A}}}
    \newcommand{\iiAI}{\li^2{\mathbf{AI}}}
    \newcommand{\iiAII}{\li^2{\mathbf{AII}}}
    \newcommand{\iiAIII}{\li^2{\mathbf{AIII}}}
    \newcommand{\iiAIIIi}{\li^2{\mathbf{AIII}}\mhyphen\mathrm{i}}
    \newcommand{\iiAIIIii}{\li^2{\mathbf{AIII}}\mhyphen\mathrm{ii}}
    \newcommand{\CI}{\mathbf{CI}}
    \newcommand{\CII}{\mathbf{CII}}
    \newcommand{\CIII}{\mathbf{CIII}}
    \newcommand{\BD}{\mathbf{BD}}
    \newcommand{\BDI}{\mathbf{BDI}}
    \newcommand{\BDII}{\mathbf{BDII}}
    \newcommand{\DII}{\mathbf{DII}}
    \newcommand{\BDIII}{\mathbf{BDIII}}
    \newcommand{\Evi}{\mathrm{E}_6}
    \newcommand{\Evii}{\mathrm{E}_7}
    \newcommand{\Eviii}{\mathrm{E}_8}
    \newcommand{\Fiv}{\mathrm{F}_4}
    \newcommand{\Gii}{\mathrm{G}_2}
    \newcommand{\iiEvi}{\li^2{\mathrm{E}}_6}
    \newcommand{\iiiDiv}{\li^3{\mathrm{D}}_4}

    \newcommand{\im}{\operatorname{im}}


    \usepackage{calligra}
    \DeclareMathOperator{\scrHom}{R\mathscr{H}\text{\kern -3pt {\calligra\large om}}\,}

    \newcommand{\Pf}{\operatorname{\mathcal{P}_f}}

    \newcommand{\nil}{\mathrm{nil}}

    \newcommand{\rc}{\mathrm{c}}
    
    \newcommand{\id}{\operatorname{id}}
    \newcommand{\tr}{\operatorname{tr}}
    
    \newcommand{\supp}{\operatorname{supp}}

    \newcommand{\fc}{\mathfrak{c}}
    \newcommand{\fd}{\mathfrak{d}}
    \newcommand{\fg}{\mathfrak{g}}
    \newcommand{\fh}{\mathfrak{h}}
    \newcommand{\fl}{\mathfrak{l}}
    \newcommand{\ft}{\mathfrak{t}}

    \newcommand{\fp}{\mathfrak{p}}

      \newcommand{\fZ}{\mathfrak{Z}}
    \newcommand{\fgl}{\mathfrak{gl}}
    \newcommand{\fsl}{\mathfrak{sl}}
    \newcommand{\fso}{\mathfrak{so}}
    \newcommand{\fsp}{\mathfrak{sp}}

    \newcommand{\GL}{\operatorname{GL}}
    \newcommand{\SL}{\operatorname{SL}}
    \newcommand{\Sp}{\operatorname{Sp}}
    \newcommand{\SO}{\operatorname{SO}}
    \newcommand{\Spin}{\operatorname{Spin}}

     \newcommand{\Lt}{\mathfrak{t}}
    \newcommand{\Lg}{\mathfrak{g}}
    \newcommand{\La}{\mathfrak{a}}
    \newcommand{\Lh}{\mathfrak{h}}

    \newcommand{\Ad}{\operatorname{Ad}}
    
    \newcommand{\Aut}{\operatorname{Aut}}

    \newcommand{\End}{\operatorname{End}}
    
    \newcommand{\IC}{\operatorname{IC}}
    \newcommand{\Lie}{\operatorname{Lie}}
    \newcommand{\Out}{\operatorname{Out}}

    \newcommand{\Ind}{\operatorname{Ind}}

    \newcommand{\Hom}{\operatorname{Hom}}

    \newcommand{\ord}{\operatorname{ord}}

     \newcommand{\Waff}{W^{\mathrm{aff}}}

     \newcommand{\on}{\operatorname}

    \newcommand{\beqn}{\begin{equation*}}
\newcommand{\eeqn}{\end{equation*}}

\newcommand{\beq}{\begin{equation}}
\newcommand{\eeq}{\end{equation}}

\newcommand{\bega}{\begin{gathered}}
\newcommand{\eega}{\end{gathered}}
 \newcommand{\bern}{\begin{eqnarray*}}
\newcommand{\eern}{\end{eqnarray*}}
\newcommand{\ber}{\begin{eqnarray}}
\newcommand{\eer}{\end{eqnarray}}

\theoremstyle{plain}
\newtheorem{theo}{Theorem}[section]

\newaliascnt{lemm}{theo}
\newaliascnt{prop}{theo}
\newaliascnt{coro}{theo}
\newaliascnt{rema}{theo}
\newaliascnt{defi}{theo}
\newaliascnt{exam}{theo}
\newaliascnt{conj}{theo}
\newtheorem{lemm}[lemm]{Lemma}
\newtheorem{prop}[prop]{Proposition}
\newtheorem{coro}[coro]{Corollary}
\theoremstyle{definition}
\newtheorem{rema}[rema]{Remark}
\newtheorem{exam}[exam]{Example}

\aliascntresetthe{lemm}
\aliascntresetthe{prop}
\aliascntresetthe{coro}
\aliascntresetthe{rema}
\aliascntresetthe{defi}
\aliascntresetthe{exam}
\aliascntresetthe{conj}

\begin{document}
\begin{abstract}
    In this paper we give a complete classification of cyclically  graded semisimple Lie algebras that afford cuspidal character sheaves and determine the support of the cuspidal character sheaves. This constitutes a major step towards the explicit classification of cuspidal character sheaves for graded Lie algebras.
\end{abstract}
\maketitle

\section{Introduction}\label{sec:introduction}
  
This paper is a continuation of our previous work~\cite{LTVX}, where we give a uniform construction of cuspidal character sheaves on cyclically graded Lie algebras.  Making use of results in that paper we proceed here to determine the gradings which afford cuspidal character sheaves and to determine the supports of cuspidal character sheaves explicitly for graded simple Lie algebras. 

Character sheaves on graded Lie algebras are connected to representations of double affine Hecke algebras~\cite{Va05,LY18,Liu23} and cohomology of homogeneous affine Springer fibres as well as cohomology of Hessenberg varieties~\cite{VV09,OY16,CVX17,CVX20b,CVX20a}. They also appear to be closely related to the emerging theory of affine character sheaves~\cite{BC25,NY25}, which aim to provide a geometric tool to study characters of $p$-adic groups. In~\cite{LTVX} we explain the connection of {\em cuspidal} character sheaves on graded Lie algebras with homogeneous elliptic  affine Springer fibres and finite-dimensional representations of double affine Hecke algebras.  One of our goals is to classify cuspidal character sheaves on each graded simple Lie algebra. We expect to apply our classification results  to the study of cohomology of homogeneous affine Springer fibres as proposed in \cite{TX25}.
 \par

  We have shown in~\cite{LTVX} that cuspidal character sheaves always arise from a nearby-cycle construction using  \emph{nil-supercuspidal data} as input, see also~\autoref{ssec:strategy}. This allows us to reduce the problem of classifying  cuspidal character sheaves to the classificaiton of  nil-supercuspidal data and the calculation of endomorphism rings of their associated nearby-cycle sheaves. This calculation has been carried out in most cases previously by the second and third named authors. Combining the results in~\cite{VX23,VX24,X24} with results in this paper we obtain a complete description of cuspidal character sheaves for majority of classical graded Lie algebras.

Let $G$ be a simply connected almost simple algebraic group over $\C$ and $\Lg=\Lie G$. The main results of this paper classify cyclic gradings $\Lg=\oplus_{i\in\bZ/m\bZ}\Lg_i$   which afford cuspidal character sheaves  and determine the supporting strata of cuspidal character sheaves for these gradings. See~\autoref{thm-classical} for classical types and~\autoref{thm-excp} for exceptional types. \par  

To determine the supporting strata of cuspidal character sheaves, recall from~\cite{LTVX} that the existence of cuspidal character sheaves is equivalent to the existence of \linebreak nil-supercuspidal data, which in turn is equivalent to the following statement:
\begin{equation}\label{eqn-levi}
\begin{gathered}
   \text{There exists a $\theta$-stable Levi subgroup $L\subseteq G$ such that $\fl = Z_{\fg}(Z(\fl)_1)$, } 
   \\
 \text{$Z(\fl)_0 = 0$, and $\theta|_{L_{\on{der}}}$ affords a bi-orbital cuspidal character sheaf}  
   \end{gathered}
\end{equation}
 where $Z(\fl)$ denotes the centre of $\fl$ and $Z(\fl)_i=Z(\fl)\cap\Lg_i$. 
More explicitly, let $L$ be a Levi as in~\eqref{eqn-levi}. Let $Z(\fl)_1^\circ=\{s\in Z(\fl)_1\mid Z_\Lg(s)=\fl\}$ and $H=L_{\on{der}}$.  
The supporting strata of cuspidal character sheaves are
\beqn
\cS:=G_0\cdot (Z(\fl)_1^\circ+x_n)\,
\eeqn
where $H_0\cdot x_n\subset\Lh_1^{\on{nil}}$ is the supporting stratum of a {\em bi-orbital cuspidal} character sheaf on $\Lh_1$. Therefore, the description of the strata $\cS$ boils down to determining \begin{enumerate*}[label={(\roman*)}]\item the subspace $Z(\fl)_1$ and \item the supporting stratum $H_0x_n$ of bi-orbital supercuspidal sheaves on $\fh^{\nil}_1$. \end{enumerate*} 
\par

In what follows we call the space $Z(\fl)_1$ as above, and its $G_0$-conjugates, a \emph{cuspidal Cartan subspace} of $\Lg_1$. The $G_0$-saturation of such a cuspidal Cartan subspace  is the semisimple part of the support of a cuspidal character sheaf. 
Note that a cuspidal Cartan subspace is not necessarily a Cartan subspace.  

As a consequence of our classification results, we have the following theorem.

\begin{theo}\label{theo:cuspidal-cartan}
\leavevmode
 \begin{enumerate}
     \item For each $(G,\theta)$, there exists at most one $G_0$-conjugacy class of  
$\theta$-stable Levi subgroups $L$ such that $Z(\fl)_0 = 0$, $\fl = Z_{\fg}(Z(\fl)_1)$ and $\theta|_{L_{\on{der}}}$ affords a bi-orbital cuspidal character sheaf.

 \item All cuspidal Cartan subspaces are $G_0$-conjugate to each other and are of dimension $\dim \fg_1 - \dim \fg_0 $.
        \item         If $\fd\subseteq \fg_1$ is a cuspidal Cartan subspace, then $W_{\fd} := N_{G_0}(\fd) / Z_{G_0}(\fd)$ is a complex reflection group. Moreover, the natural map $\fd /\!/ W_{\fd}\to \fg_1 /\!/ G_0$ is a closed immersion. 
        \item 
        If $\cF$ is a cuspidal character sheaf on $\fg_1$, then $\supp \cF /\!/ G_0 \cong \fd/\!/ W_{\fd}$ for any cuspidal Cartan subspace $\fd\subseteq \fg_1$. 
 \end{enumerate}

\end{theo}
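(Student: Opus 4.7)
The plan is to deduce all four parts from the explicit case-by-case classification provided by Theorems~\ref{thm-classical} and~\ref{thm-excp}, together with the general nearby-cycle framework of~\cite{LTVX} that translates the existence of a cuspidal sheaf on $\fg_1$ into the existence of a Levi $L$ satisfying~\eqref{eqn-levi}. For part~(1), I would go through every entry of those tables and verify that only one $G_0$-conjugacy class of $\theta$-stable Levi $L$ with $Z(\fl)_0=0$, $\fl=Z_\fg(Z(\fl)_1)$, and $\theta|_{L_{\on{der}}}$ bi-orbital cuspidal appears. Granting~(1), the first half of~(2) is automatic: a cuspidal Cartan subspace equals $Z(\fl)_1$ for some such $L$, and uniqueness of $L$ up to $G_0$-conjugacy transfers to $\fd$. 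The dimension equality $\dim \fd = \dim \fg_1 - \dim \fg_0$ is then to be read off the tables; I would in parallel seek a conceptual derivation from the Lagrangian nature of the bi-orbital nilpotent orbit in $\fh_1$ combined with the identity $\dim \fl_0 = \dim \fh_0$ (which holds because $Z(\fl)_0=0$).

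For part~(3), the reflection-group character of $W_\fd$ is guided by the endomorphism-ring calculations of~\cite{LTVX,VX23,VX24,X24}: in each case the endomorphism algebra of the nearby-cycle sheaf matches a (spherical) rational Cherednik algebra associated with the pair $(W_\fd, \fd)$, forcing $W_\fd$ to act on $\fd$ by pseudo-reflections. I would verify this uniformly from the classification. For the closed immersion $\fd /\!/ W_\fd \hookrightarrow \fg_1 /\!/ G_0$, the key observation is that if $g\in G_0$ carries $z\in Z(\fl)_1^\circ$ to $z'\in Z(\fl)_1^\circ$, then $g$ carries $\fl = Z_\fg(z)$ to $Z_\fg(z')=\fl$, so $g\in N_{G_0}(L)\subseteq N_{G_0}(\fd)$; thus $W_\fd$-orbits and $G_0$-orbits coincide on the dense open $\fd^\circ \subseteq \fd$, and the induced map on Chevalley quotients is a closed immersion.

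Part~(4) then follows almost formally. The stratum is $\cS = G_0\cdot(Z(\fl)_1^\circ + x_n)$; for $z\in Z(\fl)_1^\circ$ the element $z$ is semisimple (it lies in the central torus of $L$) and commutes with the nilpotent $x_n\in[\fl,\fl]_1$, so $z+x_n$ has Jordan decomposition $(z, x_n)$. The Chevalley morphism $\fg_1\to\fg_1/\!/G_0$ factors through semisimple parts, whence the image of $\overline{\cS}$ coincides with the image of $\fd$, which by~(3) is $\fd/\!/W_\fd$.

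The principal obstacle I anticipate is the reflection-group statement in~(3): a cuspidal Cartan subspace is strictly smaller than a Cartan subspace of $\fg_1$ in general, so $W_\fd$ is not produced directly by Vinberg's theorem, and its being generated by pseudo-reflections will have to be extracted from the case-by-case structure of nil-supercuspidal data and the matching with spherical Cherednik algebras, which is a nontrivial input rather than a formal consequence of~\eqref{eqn-levi}.
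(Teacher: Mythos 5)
Your overall strategy (case-by-case verification from the classification of Theorems~\ref{thm-classical} and~\ref{thm-excp}) matches the paper's actual route: the paper disposes of the $r=0$ case ($\fd = 0$, nothing to prove), the $r = \dim\fc$ case ($\fd = \fc$, reduce to Vinberg), and then handles the four remaining families explicitly ($\bA$ case~(b), $\BDI$ case~(b), $2_a$ of $E_6$, $3_a$ of $E_7$), obtaining $W_\fd$ concretely in each. Your conceptual derivation of the dimension formula in part~(2) is valid and worth noting: writing $\fg = \fl\oplus\fl^\perp$ with both summands $\theta$-stable, $\ad s$ for generic $s\in\fd^\circ$ is injective on $\fl^\perp$ and shifts degree by one, so $\dim(\fl^\perp)_i$ is constant in $i$; combined with $Z(\fl)_0=0$ and $\dim\fh_1=\dim\fh_0$ this gives $\dim\fd = \dim\fl_1-\dim\fl_0 = \dim\fg_1-\dim\fg_0$. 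That is a cleaner route than reading dimensions off the tables.

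Where you diverge from the paper, and where there are real gaps, is part~(3). You propose to extract the reflection-group property of $W_\fd$ from the identification of the nearby-cycle endomorphism algebra with a spherical Cherednik algebra. But that identification in~\cite{VX23,VX24,X24} takes the structure of $W_\fd$ as input, not output, so this is not an independent argument. The paper instead computes $Z_\fg(\fc_t)_{\on{der}}$ and the normalizer directly (e.g., \autoref{example-e7}, \autoref{cor:E6}, \autoref{coro:E7}) and cites~\cite{ABDR18} for the identification of the quotient $N_W(\fd)/Z_W(\fd)$ as a reflection group; in classical types it recognizes $W_\fd\cong G_{m,1,r}$ by hand. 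You should replace the Cherednik-algebra heuristic with the explicit computation, as the former is circular.

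Your argument for the closed immersion in part~(3) is also incomplete. The observation that for $z,z'\in\fd^\circ$ any $g\in G_0$ with $gz=z'$ normalizes $\fl=Z_\fg(z)$ (hence normalizes $\fd=Z(\fl)_1$) only shows that $G_0$-orbits and $W_\fd$-orbits coincide on the \emph{open dense} locus $\fd^\circ$, not on all of $\fd$, and a bijection on a dense open does not by itself give surjectivity of $\C[\fg_1]^{G_0}\to\C[\fd]^{W_\fd}$. One must either extend the orbit-matching argument to the boundary strata of $\fd$ and additionally rule out non-reducedness, or (as the paper implicitly does) quote the closed-immersion result for flats of reflection arrangements with $W_\fd$ acting as a reflection group from the literature. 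Since $\fd\subseteq\fc$ and $\fc/W\cong\fg_1/\!/G_0$ by Vinberg, the question reduces to surjectivity of $\C[\fc]^W\to\C[\fd]^{W_\fd}$, which is precisely the kind of statement~\cite{ABDR18} supplies. Part~(4) as you describe it is fine once~(3) is in place: $f(z+x_n)=f(z)$ for $z\in\fd^\circ$ and nilpotent $x_n$, and finiteness of $\fd\to\fg_1/\!/G_0$ yields $f(\overline{\cS})=f(\fd)$.
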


More specifically, let us fix a Cartan subspace $\fc\subset\Lg_1$. We have
\begin{coro}
\leavevmode
\begin{enumerate}
\item If $\dim\Lg_1-\dim\Lg_0=\dim\fc>0$, then all cuspidal character sheaves have full support.
\item
    If $(G, \theta)$ is not of type $\mathbf{BDIII}$, in the Vinberg classification of~\cite{Vi77}, then every cuspidal character sheaf on $\fg_1$ has the same support. 
    \item 
    Assume that $(G, \theta)$ is of type $\mathbf{BDIII}$ and $G \cong \Spin(N)$.  
    Let $\varepsilon\in Z(G)$ be the generator of the kernel of the double cover $\Spin(N)\to \SO(N)$. 
    Every cuspidal character sheaf on $\fg_1$ with trivial (resp. non-trivial) $\varepsilon$-action has the same support, closure of a $G_0$-distinguished nilpotent orbit. 
\end{enumerate}
    
\end{coro}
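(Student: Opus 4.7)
The strategy is to combine the explicit formula $\cS=G_0\cdot(Z(\fl)_1^\circ+x_n)$ for the support of a cuspidal character sheaf (recalled just before \autoref{theo:cuspidal-cartan}) with the rigidity furnished by \autoref{theo:cuspidal-cartan}(1): the $\theta$-stable Levi $L$ underlying a nil-supercuspidal datum is unique up to $G_0$-conjugacy, so that the cuspidal Cartan subspace $Z(\fl)_1$ and the semisimple factor $H=L_{\on{der}}$ are determined up to conjugacy. The only residual freedom in $\cS$ lies in the $H_0$-orbit of the nilpotent element $x_n\in\fh_1$ carrying a bi-orbital cuspidal on $\fh_1$.

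For part (1), the hypothesis $\dim\fg_1-\dim\fg_0=\dim\fc>0$ combined with \autoref{theo:cuspidal-cartan}(2) forces $\dim Z(\fl)_1=\dim\fc$; since $Z(\fl)_1$ is an abelian subspace of commuting semisimple elements of maximal possible dimension, it coincides with a Cartan subspace. Vinberg's dimension formula then gives $\dim\fl_0=0$, so the $G_0$-stabiliser of a generic element $c+x_n\in\fc^\circ+x_n$ is finite. Consequently $G_0\cdot(\fc^\circ+x_n)$ has dimension $\dim\fg_0+\dim\fc=\dim\fg_1$ and is dense in $\fg_1$; hence $\supp\cF=\fg_1$.

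For part (2), the uniqueness of $L$ up to $G_0$-conjugacy reduces the problem to showing that on the associated $\fh_1$, the $H_0$-orbit of $x_n$ supporting a bi-orbital cuspidal is unique. This is the content of the case-by-case classification performed in \autoref{thm-classical} and \autoref{thm-excp}, together with the prior results of \cite{VX23,VX24,X24}: outside type $\BDIII$ there is exactly one such supporting orbit in every case.

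For part (3), the classification shows that in type $\BDIII$ with $G\cong\Spin(N)$ one has $Z(\fl)_1=0$, hence $L=G$ and the cuspidal character sheaves are genuine nil-supercuspidals supported in the nilpotent cone $\fg_1^{\nil}$. There are exactly two $G_0$-orbits of distinguished nilpotent elements carrying bi-orbital cuspidal character sheaves, distinguished precisely by whether the central element $\varepsilon\in\ker(\Spin(N)\to\SO(N))$ acts trivially or non-trivially on the corresponding sheaf (equivalently, by whether it descends to $\SO(N)$). Within each $\varepsilon$-isotype the two possibilities collapse to one, and the support is the closure of a $G_0$-distinguished nilpotent orbit. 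The principal obstacle throughout is the case-by-case determination of bi-orbital cuspidal supports encapsulated in \autoref{thm-classical} and \autoref{thm-excp}; once these tables are in hand, parts (1)--(3) follow formally from the formula for $\cS$ and \autoref{theo:cuspidal-cartan}.
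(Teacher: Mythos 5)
Your approach differs structurally from the paper's own proof, which is essentially a one-line citation: parts (1)--(3) are read off directly from the explicit classification statements (\autoref{thm-classical}, \autoref{thm-excp}, and \autoref{thm:bisup-excep}), with the only extra observations being that in type $\bA$ the sub-case $m=N$ has $r=\dim\fc=1$, and that in type $\BDI$ the sub-case (a) gradings and sub-case (b) gradings never coincide. Your route instead derives the corollary from the rigidity of \autoref{theo:cuspidal-cartan} combined with the formula $\cS=G_0\cdot(Z(\fl)_1^\circ+x_n)$; this reduction is more conceptual, and it is fine for parts (2) and (3), where you ultimately still invoke the classification to get uniqueness of the supporting $H_0$-orbit on $\fh_1$.

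Your argument for part (1), however, has a gap. The assertion ``Vinberg's dimension formula then gives $\dim\fl_0=0$'' is false except in the GIT-stable case. When $r=\dim\fc>0$ one indeed has $Z(\fl)_1=\fc$, hence $\fl=Z_\fg(\fc)$, but the nil-supercuspidal condition only forces $Z(\fl)_0=0$, not $\fl_0=0$: one has $\fl_0 = Z(\fl)_0\oplus\fh_0 = \fh_0$ with $\fh=\fl_{\on{der}}$, and $\fh$ is non-zero precisely when the grading is not GIT-stable (e.g.\ type $\BDI$ with $(a,b)\neq(0,0)$, or the grading $12_e$ of $\Eviii$). The conclusion you want, that $Z_{G_0}(c+x_n)$ is finite for generic $c\in\fc^\circ$, is still correct but requires a different argument: since $\fc=Z(\fl)_1$ is a Cartan subspace of $\fg_1$ and elements of $\fh_1$ commute with $\fc$, maximality of $\fc$ forces $\fh_1$ to contain no non-zero semisimple element, so $\fh_1^{\nil}=\fh_1$; the supporting orbit $H_0\cdot x_n$ is then open dense in $\fh_1$, giving $\dim Z_{\fh_0}(x_n)=\dim\fh_0-\dim\fh_1=0$. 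Combined with $Z(\fl)_0=0$, this makes $Z_{L_0}(x_n)$ finite, hence $Z_{G_0}(c+x_n)$ is finite, and your dimension count then yields full support.
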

\begin{rema}
    It follows from our classification that in most cases cuspidal Cartan subspaces are in fact   Cartan subspaces of $\fg_1$, that is, the cuspidal character sheaves have full suport. This  is the case in particular when the action $G_0\curvearrowright\fg_1$ is GIT-stable, i.e., when there exists a semisimple element $s\in \fg_1$ with finite stabiliser $Z_{G_0}(s)$ in $G_0$. In these cases, parts (2)-(4) of \autoref{theo:cuspidal-cartan} are reduced to the classical results of Vinberg~\cite{Vi77}. We refer the readers to~\autoref{coro:weyl} for the list of $W_\fd$ when the cuspidal Cartan subspace $\fd$ is strictly contained in a Cartan subspace.  At the other extreme,  when $\dim\Lg_1=\dim\Lg_0$ the only such Levi subgroup $L$ as in part (1) of the  theorem, if it exists, coincides with $G$.  Thus the only possible cuspidal Cartan subspace is $0$ and therefore the cuspidal character sheaves have nilpotent support. This latter case resembles Lusztig's $m=1$ situation.
\end{rema}
\begin{rema}
Consider the supporting stratum $\cS$ of a cupsidal character sheaf. Let $x = x_s + x_n\in \cS$ with $x_s\in \fd^{\circ}=\{s\in\fd\mid Z_\Lg(s)=Z_\Lg(\fd)\}$. We have  the following exact sequence for the equivariant fundamental group of $\cS$ (see~\cite{LTVX})
\[
    1\to \pi_0(Z_{G_0}(x))\to \pi_1(\cS/G_0, x) \to \pi_1(\fd^{\circ}/W_{\fd}, x_s)\to 1
\]
where $\pi_1(\fd^{\circ}/W_{\fd}, x_s)$ is the braid group, see  Brou\'e--Malle--Rouquier~\cite{BMR95}, associated with  $W_\fd$, which is a complex reflection group by ~\autoref{theo:cuspidal-cartan}. This allows us to describe any $G_0$-equivariant local system on $\cS$, in particular, those giving rise to cuspidal character sheaves, as representations of the braid group of $W_{\fd}$ extended by a finite group $\pi_0(Z_{G_0}(x))$. Such representations associated to cuspidal character sheaves have been determined in most cases of classical types  in~\cite{VX22,VX23,VX24,X24}.

\end{rema}

The paper is organised as follows. In \S\,\ref{sec:overview} we briefly recall the results that we need from~\cite{LTVX} and explain our general classification strategy. A key step is the classification of bi-orbital cuspidal character sheaves, for which we describe the general method in~\S\,\ref{ssec:method}. In \S\,\ref{sec:quiver} we recollect the necessary results on graded Lie algebras. In particular, we give preliminary dimensional estimates in~\S\,\ref{ssec:dim}. In \S\,\ref{sec:statements} we state our main classification results explicitly treating each type separately. For classical types, $\bA, \iiA, \BD, \bC$, we use the quiver description of the graded Lie algebra $\fg_*$ recalled in~\S\,\ref{ssec:gradings} and~\S\,\ref{ssec:quivers}  and the multi-segment notation for nilpotent orbits described in~\S\,\ref{ssec:multiseg}. For exceptional types, we use Kac coordinates and RLYG labelling for graded Lie algebras as recalled in~\S\,\ref{ssec:affine} and labelling of distinguished nilpotent orbits given in~\S\,\ref{ssec:nilp}. The proofs of the main results are given in~\S\,\ref{sec:classical} and~\S\,\ref{sec:exceptional} making use of preparatory results in~\S\,\ref{sec:distinguished} and~\S\,\ref{sec-bicus}. In particular, in~\S\,\ref{sec:distinguished} we determine the $G_0$-distinguished elements in each type and in~\S\,\ref{sec-bicus} we give a classificaiton of bi-orbital cuspidal character sheaves.

\bigskip

\noindent {\bf Acknowledgement.} We thank Cheng-Chiang Tsai for helpful discussions.

\section{Overview}\label{sec:overview}

In this section we  explain the general  strategy that we use to obtain the classification of graded Lie algebras which afford cuspidal character sheaves. The detailed results are stated in~\autoref{sec:statements}. 

\subsection{Graded Lie algebras}
We briefly recall some basic facts about graded Lie algebras and character sheaves as discussed in~\cite{LTVX}. Throughout this paper let $G$ be a simply connected almost simple algebraic group endowed with an automorphism $\theta$ of order $m$ for some $m\ge 1$. The Lie algebra $\fg = \Lie G$ acquires a $\Z/m\Z$-grading $$\fg=\bigoplus_{k\in\bZ/m\bZ}\fg_k$$defined by $\fg_k = \fg^{\theta = \zeta^k_m}$, where $\zeta_m = e^{2\pi \mathbf{i} / m}$. We set $G_0 = G^{\theta}$. As $G$ is simply connected it follows from a theorem of Steinberg that $G_0$ is connected. The adjoint action of $G$ on $\fg$ restricts to an action of $G_0$ on $\fg_k$ for every $k\in \Z/m\Z$. 

We fix a Cartan subspace $\fc\subset\fg_1$. Let  $W(G_0, \fc)=N_{G_0}(\fc)/Z_{G_0}(\fc)$ be the Weyl group of $(G_0,\Lg_1)$ and let 
$$
f:\fg_1 \to \fg_1 /\!/ G_0\cong\fc/W(G_0,\fc)
$$
 denote the adjoint quotient map (\cite{Vi77}). We refer to $\dim\fc$ as the rank of the graded Lie algebra. The nullcone $\Lg_{1}^{\on{nil}}$ in the sense of invariant theory is given by $\Lg_{1}^{\on{nil}}=f^{-1}(f(0))$. We use similar notation for $\Lg_{-1}$. 

A character sheaf is, for the purposes of this paper, a simple $G_0$-equivariant perverse sheaf on $\Lg_1$ with nilpotent singular support. Equivalently, it is the Fourier transform of an irreducible $G_0$-equivariant perverse sheaf on $\Lg_{-1}^{\on{nil}}$, the nilpotent variety of $\Lg_{-1}$.  A character sheaf is called {\em cuspidal} if it does not arise from parabolic induction from smaller groups. A character sheaf is called {\em bi-orbital} if it has nilpotent support. 

In~\cite[\S2]{LTVX} we introduce a stratification of $\fg_1$. The strata are of the form 
$$\cS_{s+n}:=G_0\cdot(\fc_s+n),\,s\in\fc,\ n\in Z_\fg(s)_{\on{der}}\cap\Lg_1^{\on{nil}}$$ 
where $\fc_s = \{x\in \fg_1\mid Z_G(x) = Z_G(s)\}\subseteq\fc$ is a $\fc$-stratum. As character sheaves are Fourier transforms of irreducible $G_0$-equivariant perverse sheaves on $\Lg_{-1}^{\on{nil}}$ they are supported on closures of duals $\widecheck{\cO}$ of nilpotent orbits $\cO\subseteq \Lg_{-1}^{\on{nil}}$. 

In~\cite[\S2]{LTVX} we show that the closure of the support of a character sheaf $\cF$ contains  an open  stratum $\widecheck\cO$ (which is of the form $\cS_{s+n}$) such that  $\cF|_{\widecheck\cO}$ is a local system.

We will determine the gradings on $\Lg$ such that $\Lg_1$ affords a cuspidal character sheaf. For such gradings we determine the supports of the cuspidal character sheaves. We will  rely heavily on the results in~\cite{LTVX}.

\subsection{Overall strategy}\label{ssec:strategy}

We begin by recalling some notions and results from ~\cite{LTVX}. An element $x\in \fg_{1}$ is \emph{$G_0$-distinguished} if the stabilizer $Z_{G_0}(x)$ is unipotent. Or, equivalently, if $x$ is not contained in  a $\theta$-stable Levi factor of a $\theta$-stable proper parabolic subalgebra of $\Lg$. If $x$ is distinguished we also call the $G_0$-orbit $\rc=G_0 \cdot x$ distinguished. 

Let $x_s$ denote the semisimple part of a $G_0$-distinguished element $x\in\fg_1$. Let $H=Z_G(x_s)_{\on{der}}$, $\Lh=\on{Lie}H$ and $\Lh_1=\Lh\cap\Lg_1$. Let $\chi$ be an irreducible $G_0$-equivariant local system on  $\rc=G_0\cdot x$. We call $(\rc,\chi)$ a nil-supercuspidal datum if the intersection cohomology sheaf $\IC(\chi|_{\rc\cap(x_s+\fh_{1}^{\nil})})\in \on{Perv}_{H_{0}}(\fh_{1}^{\nil})$ is a sum of bi-orbital cuspidal sheaves.

In~\cite{LTVX} we have shown that if $\widecheck\cO\subseteq \fg_1$ is the supporting stratum of a cuspidal character sheaf, then 
\begin{enumerate}
\item $\widecheck\cO$ consists of $G_0$-distinguished elements.   
\item $\dim \fg_1 - \dim \fg_0 = \dim f(\widecheck\cO).$

\end{enumerate}
In particular, 
\begin{enumerate}
    \item if $\fg_1$ admits a cuspidal character sheaf, then $\dim \fg_1\ge \dim \fg_0 $.
    \item if $\cF$ is a cuspidal character sheaf on $\fg_1$, then it is bi-orbital if and only if $\dim \fg_1= \dim \fg_0$.
    \item if $\dim\fc=0$, i.e., the rank of the grading is zero, then all cuspidal character sheaves, if they exist, are bi-orbital.
\end{enumerate}

The classification of the gradings affording cuspidal character sheaves is thus reduced to the following cases:
\begin{enumerate}
    \item the case when $\dim\Lg_1=\dim\Lg_0$ and when there exist bi-orbital cuspidal character sheaves;
    \item the case when $\dim\Lg_1>\dim\Lg_0$. In this case the rank has to be positive, i.e., $\dim\fc>0$ in order to have any cuspidals. Then there are the following two possibilities:
\begin{enumerate}
\item  $\dim\Lg_1-\dim\Lg_0=\dim\fc$. In this case  the Levi subgroup $L$ of~\eqref{eqn-levi} is necessarily $G_0$-conjugate to $Z_G(\fc)$. By~\eqref{eqn-levi}  cuspidal character sheaves exist if and only if $\theta|_{Z_G(\fc)_{\on{der}}}$ affords a  bi-orbital cuspidal character sheaf.
\item  $\dim\Lg_1-\dim\Lg_0<\dim\fc$. In this case the Levi subgroup $L$ of~\eqref{eqn-levi} is $G_0$-conjugate to $Z_G(\fc_s)$, where $\fc_s\subset\fc$ is a $\fc$-stratum such that $\dim\fc-\dim\fc_s=\dim\Lg_1-\dim\Lg_0$.  By~\eqref{eqn-levi}  cuspidal character sheaves exist  if and only if $\theta|_{Z_G(\fc_s)_{\on{der}}}$ affords a  bi-orbital cuspidal character sheaf for such a $\fc$-stratum $\fc_s$.
\end{enumerate}
\end{enumerate}
Thus we are reduced to classifying bi-orbital cuspidal character sheaves. We describe the method for that in the next subsection.

\subsection{General method for classifying bi-orbital cuspidal character sheaves}\label{ssec:method}
Let $(\cO, \cC)$ be a pair consisting of a $G_0$-nilpotent orbit $\cO\subseteq \fg_1^{\nil}$ and an irreducible  $G_0$-equivariant local system $\cC$ such that $\IC(\cC)$ is a bi-orbital cuspidal character sheaf. Then the orbit $\cO$ is necessarily $G_0$-distinguished.  Pick an $\fsl_2$-triplet $(e, h, f)$ such that $e\in \cO$, $h\in \fg_0$ and $f\in \fg_{-1}$. Let $\varphi:\Cc\to G_0$ be the cocharacter characterised by $d\varphi(1) = h$. Set 
\[
\fl = \bigoplus_{n\in \Z} \fl_n,\quad \fl_n = \li^{\varphi}_{2n}\fg_n,\quad \fp = \bigoplus_{n\in \Z} \fp_n,\quad \fp_n = \li^{\varphi}_{\ge 2n}\fg_n
\]
where $$\li^\varphi_{n}{\fg}=\{z\in\fg\mid\on{Ad}_{\varphi(t)}z=t^nz,\,\forall\,t\in\Cc\}\text{ and } 
$$
$${}^\varphi_{n}{\fg}_k={}^\varphi_{n}\fg\cap \fg_{\bar k}, \text{ here $\bar{k}\in\Z/m\Z$},\ \ \ \li^{\varphi}_{\ge 2n}\fg_n=(\oplus_{k\geq 2n}{}_k^\varphi\Lg)\cap\Lg_{\bar{n}}$$
and $L = \exp(\fl)$, $L_0 = \exp(\fl_0)$, $P_0 = \exp(\fp_0)$. Then, $\fp_*$ is a spiral of $\fg_*$ and $\fl_*$ is a splitting of $\fp_*$ in the sense of~\cite{LY17a}. By~\cite{Liu24}, the bi-orbital cuspidal condition on $\IC(\cC)$ implies that $(\cO,\cC)$ is a supercuspidal pair, i.e., $\IC(\cC)$ is supercuspidal. In other words, 
\begin{enumerate}
    \item
        $\fl$ is semisimple,
    \item 
        the restriction $(\cO\cap \fl_1, \cC|_{\cO\cap \fl_1})$ is a cuspidal pair on $\fl_1$ (with respect to the $\bZ$-grading on $\fl$) in the sense of~\cite{L95}, and
    \item
            $\IC(\cC) \cong \Ind^{\fg_1}_{\fp_1}\IC(\cC|_{\fl_1\cap \cO})$ (where $\Ind^{\fg_1}_{\fp_1}$ is the spiral induction). 
\end{enumerate}
By~\cite{LTVX}, the bi-orbital cuspidal condition implies that $\dim \fg_1 = \dim \fg_0$. Therefore, our task is to enumerate supercuspidal pairs $(\cO, \cC)$ on graded Lie algebras satisfying $\dim \fg_1 = \dim \fg_0$. \par

By~\cite{L95}, every cuspidal pair on the $\Z$-graded Lie algebra $\fl_*$ is of the form $(\cO_L\cap \fl_1, \cC_L|_{\cO_L\cap \fl_1})$ for some cuspidal pair $(\cO_L, \cC_L)$ on $\fl$ in the sense of~\cite{L84}. The latter is classifed by~\cite{L84,LS85,S85} for simply connected groups. Therefore, the classification of supercuspidal pairs $(\cO, \cC)$ is reduced to checking the action of central character of $L$ on the cuspidal pair $(\cO_L, \cC_L)$. 

We will carry this out in~\S\,\ref{sec-bicus}.

 \section{Recollections on graded Lie algebras and dimension estimates}\label{sec:quiver}

For the purpose of classification, we will often consider pairs $(G, \theta)$ of a fixed \emph{type} simultaneously. Given a simply connected almost simple complex algebraic group $G$, the outer automorphism group $\Out(G) = \Aut(G) / \Ad(G)$ is isomorphic to a permutation group $\mathfrak{S}_a$ for some $a\in \{1, 2, 3\}$; therefore, the conjugacy class of an element of $\Out(G)$ is determined by its order. Given  $G$ of Cartan type $\bX$ and an automorphism $\theta\in \Aut(G)$, we say that $(G,\theta)$ is of type $\li^c\bX$ if the image of $\theta$ under the projection $\Aut(G)\to \Out(G)$ is of order $c$. We will omit $c$ from the notation when $c = 1$ and write simply $\bX = \li^1\bX$. Note that $c\mid \on{ord}(\theta)$. \par

In this section, we recall some basic results on the description of the pair $(G, \theta)$ of a given type. 

\subsection{Gradings of classical Lie algebras}\label{ssec:gradings}
We recall the explicit description of gradings on the Lie algebras for pairs $(G,\theta)$ of classical types $(\bA, \iiA, \bB, \bC, \bD, \li^2\bD)$ following Vinberg~\cite{Vi77}. Let $m=\on{ord}(\theta)$.

The types $\bB$, $\bD$, and $\li^2\bD$ can be treated uniformly; we denote them together as $\BD$. We write $\zeta_k=e^{2\pi\mathbf{i}/k}$ for $k\in\Z_{>0}$. Then, the pairs $(G, \theta)$  of classical types arise as follows.  

\begin{itemize}
\item Type $\bA$.  Let $G=\SL(V)$, where $V$ is a finite-dimensional complex vector space. Let $\gamma\in \GL(V)$ such that $\gamma^m = 1$. Let $\theta(g) = \gamma g \gamma^{-1}$. 
\item Type $\li^2{\bA}$.  Let $G=\SL(V)$, where $V$ is a finite-dimensional complex vector space equipped with a non-degenerate bilinear form $(\relbar,\relbar)$. Let $\theta:G\to G$ be the unique automorphism satisfying
\[
(gv, w) = (v, \theta(g)^{-1}w) \quad \text{for $v, w\in V$\,.}
\]
We have $\theta^2(g)=\gamma g\gamma^{-1}$, where $\gamma\in G$ is characterised by the condition $(v,w) = (w, \gamma v)$ for every $v, w\in V$.  We can assume that $\gamma^{m/2}\in \{ \pm \id_V\}$. 

\item Type $\bC$ (resp. type $\bB\bD$). Let $V$ be a finite-dimensional complex vector space equipped with a non-degenerate symplectic (resp. symmetric bilinear) form $(\relbar,\relbar)$. Let $G=\Sp(V)$ (resp. $\SO(V)$) and define $\theta:G\to G$ by
\[
    \theta(g)=\gamma g\gamma^{-1},\,\gamma\in \Sp(V) \text{ (resp. $\on{O}(V)$)}.
\]
We have that $\gamma^{m}\in \{\pm \id_V\}$. 
\end{itemize}\par

Note that in type $\BD$, the automorphism $g \mapsto \gamma g \gamma^{-1}$ is defined by picking any lift of $\gamma$ in $\on{Pin}(V)$. 

We set $m_0=m/2$ type $\li^2{\bA}$ and $m_0 = m$ for types $\bA$, $\bC$ and $\mathbf{BD}$. Then, we always have $\gamma^{m_0}\in \{\pm \id_V\}$ by assumption. Let $$V_k=\{v\in V\mid \gamma v=\zeta^{2k}_{2m_0} v\}$$ for $k\in \frac{1}{2}\Z/m_0\Z$. Then $V=\bigoplus_{k\in I}V_k$, where 
\begin{equation*}
    \text{$I = \Z/m_0\Z$ (when $\gamma^{m_0} = \id_V$) or $I = (\frac{1}{2}+\Z)/m_0\Z$ (when $\gamma^{m_0} = -\id_V$). }
\end{equation*}
 We have that $(V_j,V_k)=0$ unless $k = -j$ in $I$. Moreover, in type $\li^2\bA$, $(\relbar,\relbar)|_{V_0}$ is a non-degenerate symmetric bilinear form and $(\relbar,\relbar)|_{V_{m_0/2}}$ is a non-degenerate symplectic form.

We have, for $k\in \Z/m\Z$,
\[
\fg_k=\begin{cases}
\{x\in \End(V)\mid \tr(x) = 0,\; xV_{j}\subseteq V_{j+k}\} & \bA \\ 
\{x\in\End(V)\mid \tr(x) = 0,\; (xv,w)+\zeta_{m}^k(v,xw)=0\;\forall v,w\in V\}& \iiA \\
\{x\in \End(V)\mid xV_{j}\subseteq V_{j+k},\; (xv,w)+(v,xw) = 0\;\forall v,w\in V\} & \BD, \bC 
\end{cases}
\]
 \par
Let $\bd = \dim_I V\in \N^I$ denote the dimension vector of $V$. For each of the four families ($\mathbf{A}, \mathbf{BD}, \mathbf{C}, \li^2{\mathbf{A}}$), the $G^{\sigma}$-conjugacy class of $\theta$ is determined by the set $I$ and the dimension vector $\bd$. Therefore, we shall denote
\[
	\fg(\bd)_* = \fg(V)_* = \begin{cases}\fsl(\bd)_* & \mathbf{A}\\ \fso(\bd)_*& \mathbf{BD} \\ \fsp(\bd)_*& \mathbf{C} \\ \li^2{\fsl}(\bd)_* & \li^2{\mathbf{A}}\end{cases}.
\]
Moreover, each of the families $\mathbf{BD},\mathbf{C}, \li^2{\mathbf{A}}$ are subdivided into subfamilies according to conditions on the index set $I$, which Vinberg~\cite{Vi77} calls types I, II and III. We will call them types $\iiAI$, $\CIII$, $\BDII$, etc. The conditions are listed in the following table:
\begin{center}
\begin{tabular}{c|c|c|c}
& $\BD$ & $\bC$ & $\iiA$  \\ \hline
I & $I = \Z/m_0\Z,\; 2\mid m_0$ & $I = (\Z+\frac{1}{2})/m_0\Z,\; 2\mid m_0$ & $I = \Z/m_0\Z,\; 2\nmid m_0$\\ \hline
II & $I = (\Z+\frac{1}{2})/m_0\Z,\; 2\mid m_0$ & $I = \Z/m_0\Z,\; 2\mid m_0$ & $I = (\Z+\frac{1}{2})/m_0\Z,\; 2\nmid m_0$ \\ \hline
III & $2\nmid m_0$ & $2\nmid m_0$ & $2\mid m_0$\\ 
\end{tabular}
\end{center}
For $(G, \theta)$ of type $\bA$, we do not make such a distinction.

\subsection{Quiver description of classical graded Lie algebras} \label{ssec:quivers}
From~\autoref{ssec:gradings}, we see that the pairs $(G,\theta)$ of classical types can be described in terms of representations of a cyclic quiver $Q = (I, (i\to i+1)_{i\in I})$, possibly equipped with a bilinear form, see~\cite{Y} (also~\cite{YY}). 
We give a recollection below: 
\begin{center}
\newenvironment{tabeq}{\renewcommand{\arraystretch}{1.5} \begin{tabular}{l}}{\end{tabular}}
	\begin{longtable}{ccl}
		$\bA$
		&
			\begin{tikzcd}[column sep=1cm, row sep=.25cm]
				  \cdots & V_1 \ar[swap]{l}{x_1} \\ & & V_0 \ar[swap]{lu}{x_0} \\  \cdots \ar{r}{x_{m-2}} & V_{m-1} \ar{ru}{x_{m-1}} 
			\end{tikzcd}
		&
		\begin{tabeq}
			$G_0 = \on{S}(\prod_{i = 0}^{m-1}\GL(V_i))$ \\
			$\fg_1 =\bigoplus_{i = 0}^{m-1}\Hom(V_i,V_{i+1})$\\
   $m\ge 2$
       \end{tabeq}
		\\
		$\iiAI$
		&
			\begin{tikzcd}[column sep=.8cm, row sep=.25cm]
				V_l \ar{dd}{x_l} &\cdots\ar[swap]{l}{x_{l-1}} & V_1 \ar[swap]{l}{x_1} \\ & & & V_0 \ar[swap]{lu}{x_0} \\ V_{-l} \ar{r}{x_{l-1}^*} &\cdots \ar{r}{x_1^*} & V_{-1} \ar{ru}{x_0^*} 
			\end{tikzcd}
		&
		\begin{tabeq}
			$G_0 = \prod_{i = 1}^l\GL(V_i)\times \SO(V_0)$ \\
			$\fg_1 =\bigoplus_{i = 0}^{l-1}\Hom(V_i,V_{i+1})\oplus\on{S}^2V_l^*$\\
   $m=2(2l+1)$
       \end{tabeq}
		\\
		$\CI$
		&
		\begin{tikzcd}[column sep=.7cm, row sep=.8cm]
    V_{l-\frac{1}{2}} \ar{d}{x_{l}} &\cdots\ar[swap]{l}{x_{l-1}} & V_{\frac{3}{2}} \ar[swap]{l}{x_2}& V_{\frac{1}{2}} \ar[swap]{l}{x_1} \\ V_{l+\frac{1}{2}} \ar{r}{x_{l-1}^*}&\cdots \ar{r}{x_2^*}& V_{-\frac{3}{2}} \ar{r}{x_1^*} & V_{-\frac{1}{2}} \ar{u}{x_0}
		\end{tikzcd} 
		&
		\begin{tabeq}
			$G_0=\prod_{i = 0}^{l-1}\GL(V_{i + \frac{1}{2}})$ \\
    $\fg_1=\bigoplus_{i=1}^{l-1}\Hom(V_{i-\frac{1}{2}},V_{i+\frac{1}{2}})$\\$\qquad\quad\oplus\on{S}^2V_{\frac{1}{2}}\oplus\on{S}^2V_{l-\frac{1}{2}}^*$\\
    $m=2l$
		\end{tabeq} 
		\\
		$\BDI$
		&
		\begin{tikzcd}[column sep=.6cm, row sep=.25cm]
			& V_{l-1} \ar[swap]{ld}{x_{l-1}} &\cdots\ar[swap]{l}{x_{l-2}} & V_1 \ar[swap]{l}{x_1} \\ V_l\ar{rd}{x_{l-1}^*} & & & & V_0 \ar[swap]{lu}{x_0} \\ & V_{l+1} \ar{r}{x_{l-2}^*}&\cdots  \ar{r}{x_1^*}& V_{-1} \ar{ru}{x_0^*}
		\end{tikzcd} 
		&
		\begin{tabeq}
			$G_0\xrightarrow{2:1}\prod_{i=1}^{l-1}\GL(V_{i})\times \SO(V_0)\times\SO(V_l)$ \\
			$\fg_1=\bigoplus_{i=0}^{l-1}\Hom(V_{i},V_{i+1})$\\
    $m=2l$
		\end{tabeq}  \\
		$\iiAII$
		&
			\begin{tikzcd}[column sep=.6cm, row sep=.25cm]
				& V_{l-\frac{1}{2}} \ar[swap]{ld}{x_l} &\cdots\ar[swap]{l}{x_{l-1}} & V_{\frac{1}{2}} \ar[swap]{l}{x_1} \\ V_{l+\frac{1}{2}}  \ar{rd}{x_l^*} & & &  \\ & V_{-l+\frac{1}{2}} \ar{r}{x_{l-1}^*} &\cdots \ar{r}{x_1^*} & V_{-\frac{1}{2}} \ar{uu}{x_0} 
			\end{tikzcd}
		&
		\begin{tabeq}
			$G_0=\prod_{i = 0}^{l-1}\GL(V_{i+\frac{1}{2}})\times \Sp(V_{l+\frac{1}{2}})$ \\
			$\fg_1=\bigoplus_{i = 1}^{l}\Hom(V_{i-\frac{1}{2}},V_{i+\frac{1}{2}})\oplus\bigwedge^2V_{\frac{1}{2}}$ \\
    $m=2(2l+1)$
		\end{tabeq}
		\\
		$\CII$
		&
		\begin{tikzcd}[column sep=.6cm, row sep=.25cm]
			& V_{l-1} \ar[swap]{ld}{x_{l-1}} &\cdots\ar[swap]{l}{x_{l-2}} & V_1 \ar[swap]{l}{x_1} \\ V_l\ar{rd}{x_{l-1}^*} & & & & V_0 \ar[swap]{lu}{x_0} \\ & V_{l+1} \ar{r}{x_{l-2}^*}&\cdots  \ar{r}{x_1^*}& V_{-1} \ar{ru}{x_0^*}
		\end{tikzcd} 
		&
		\begin{tabeq}
			$G_0=\prod_{i=1}^{l-1}\GL(V_{i})\times \Sp(V_0)\times\Sp(V_l)$ \\
			$\fg_1=\bigoplus_{i=0}^{l-1}\Hom(V_{i},V_{i+1})$\\
    $m=2l$
		\end{tabeq} 
		\\
		$\DII$
		&
		\begin{tikzcd}[column sep=.6cm, row sep=.8cm]
    V_{l-\frac{1}{2}} \ar{d}{x_l} &\cdots\ar[swap]{l}{x_{l-1}} & V_{\frac{3}{2}} \ar[swap]{l}{x_2}& V_{\frac{1}{2}} \ar[swap]{l}{x_1} \\ V_{l+\frac{1}{2}} \ar{r}{x_{l-1}^*}&\cdots \ar{r}{x_2^*}& V_{-\frac{3}{2}} \ar{r}{x_1^*} & V_{-\frac{1}{2}} \ar{u}{x_0}
		\end{tikzcd} 
		&
		\begin{tabeq}
			$G_0\xrightarrow{2:1}\prod_{i = 0}^{l-1}\GL(V_{i + \frac{1}{2}})$ \\
			$\fg_1=\bigoplus_{i=1}^{l-1}\Hom(V_{i-\frac{1}{2}},V_{i+\frac{1}{2}})$\\$\qquad\quad\oplus\bigwedge^2V_{\frac{1}{2}}\oplus\bigwedge^2V_{l-\frac{1}{2}}^*$\\
    $m=2l$
		\end{tabeq}  \\
        $\iiAIIIi$
		&
		\begin{tikzcd}[column sep=.6cm, row sep=.25cm]
			& V_{l-1} \ar[swap]{ld}{x_{l-1}} &\cdots\ar[swap]{l}{x_{l-2}} & V_1 \ar[swap]{l}{x_1} \\ V_l\ar{rd}{x_{l-1}^*} & & & & V_0 \ar[swap]{lu}{x_0} \\ & V_{l+1} \ar{r}{x_{l-2}^*}&\cdots  \ar{r}{x_1^*}& V_{-1} \ar{ru}{x_0^*}
		\end{tikzcd} 
		&
		\begin{tabeq}
			$G_0=\prod_{i=1}^{l-1}\GL(V_{i})\times \SO(V_0)\times\Sp(V_l)$ \\
			$\fg_1=\bigoplus_{i=0}^{l-1}\Hom(V_{i},V_{i+1})$\\
    $m=4l$
		\end{tabeq} 
		\\
		$\iiAIIIii$
		&
		\begin{tikzcd}[column sep=.6cm, row sep=.8cm]
    V_{l-\frac{1}{2}} \ar{d}{x_l} &\cdots\ar[swap]{l}{x_{l-1}} & V_{\frac{3}{2}} \ar[swap]{l}{x_2}& V_{\frac{1}{2}} \ar[swap]{l}{x_1} \\ V_{l+\frac{1}{2}} \ar{r}{x_{l-1}^*}&\cdots \ar{r}{x_2^*}& V_{-\frac{3}{2}} \ar{r}{x_1^*} & V_{-\frac{1}{2}} \ar{u}{x_0}
		\end{tikzcd} 
		&
		\begin{tabeq}
			$G_0=\prod_{i = 0}^{l-1}\GL(V_{i + \frac{1}{2}})$ \\
			$\fg_1=\bigoplus_{i=1}^{l-1}\Hom(V_{i-\frac{1}{2}},V_{i+\frac{1}{2}})$\\$\qquad\quad\oplus\bigwedge^2V_{\frac{1}{2}}\oplus\on{S}^2V_{l-\frac{1}{2}}^*$\\
    $m=4l$
		\end{tabeq} 
		\\
		$\CIII$
		&
			\begin{tikzcd}[column sep=.8cm, row sep=.25cm]
				V_l \ar{dd}{x_l} &\cdots\ar[swap]{l}{x_{l-1}} & V_1 \ar[swap]{l}{x_1} \\ & & & V_0 \ar[swap]{lu}{x_0} \\ V_{-l} \ar{r}{x_{l-1}^*} &\cdots \ar{r}{x_1^*} & V_{-1} \ar{ru}{x_0^*} 
			\end{tikzcd}
		&
		\begin{tabeq}
			$G_0=\prod_{i = 1}^l\GL(V_i)\times \Sp(V_0)$ \\
     $\fg_1=\bigoplus_{i = 0}^{l-1}\Hom(V_i,V_{i+1})\oplus \on{S}^2V_l^*$ \\
    $m=2l+1$
		\end{tabeq} 
		\\
		$\BDIII$
		&
			\begin{tikzcd}[column sep=.8cm, row sep=.25cm]
				V_l \ar{dd}{x_l} &\cdots\ar[swap]{l}{x_{l-1}} & V_1 \ar[swap]{l}{x_1} \\ & & & V_0 \ar[swap]{lu}{x_0} \\ V_{-l} \ar{r}{x_{l-1}^*} &\cdots \ar{r}{x_1^*} & V_{-1} \ar{ru}{x_0^*} 
			\end{tikzcd}
		&
		\begin{tabeq}
			$G_0\xrightarrow{2:1}\prod_{i = 1}^l\GL(V_i)\times \SO(V_0)$ \\
			$\fg_1=\bigoplus_{i = 0}^{l-1}\Hom(V_i,V_{i+1})\oplus\bigwedge^2V_l^*$ \\
    $m=2l+1$
		\end{tabeq} 
	\end{longtable}
\end{center}

\subsection{Multi-segments and nilpotent orbits in classical types}\label{ssec:multiseg}

In this subsection   
we introduce the multi-segment labelling of nilpotent orbits in classical types.

Fix a positive integer $m_0 \ge 1$ and a coset $\tilde I\subset \frac{1}{2}\Z$ of index $2$ (either $\tilde I = \Z$ or $\tilde I = 1/2 + \Z$). Set $I = \tilde I / \Z\subset \frac{1}{2}\Z / m_0\Z$. A segment on $I$ is a pair of numbers $[a, b]$ with $a,b \in \tilde I$ satisfying $a \le b$, modulo the congruence relation $[a, b] \equiv [a + m_0, b + m_0]$.  A multi-segment on $I$ is a finite formal sum of segments $\bs  = \sum_{[a,b]} c_{[a,b]} [a, b]$ with $c_{[a,b]}\in \N$.  The number $c_{[a,b]}$ is called the multiplicity of $[a,b]$ in $\bs $. We write $[a,b]\in \bs $ if $c_{[a,b]} > 0$. \par

The length of a segment $[a,b]$ on $I$ is the positive integer $b - a + 1$. We call $[a,b]$ odd/even if its length is so. \par

The dual of a segment $[a,b]$ on $I$ is defined to be $[a,b]^* = [-b, -a]$. A segment $[a,b]$ on $I$ is called self-dual if $[a,b]\equiv [a, b]^* \pmod{m_0}$. The dual of a multi-segment $\bs  = \sum_{[a,b]} c_{[a,b]} [a, b]$ is defined to be $\bs ^* = \sum_{[a,b]} c_{[a,b]} [a, b]^*$. 

Given a multi-segment $\bs  = \sum_{[a,b]} c_{[a,b]} [a, b]$ on $I$, we let
$$\mathbf{d}_{\bs }=(d_i)_{i\in I},\,d_i:=\sum_{[a,b]\in\bs }\sum_{\substack{j\in[a,b]\\j\equiv i\!\!\mod |I|}}c_{[a,b]}$$
where we regard $[a,b]$ as the set of integers $\{a,a+1,a+2,\ldots,b\}$.

We consider the cyclic quiver $Q = (I, (i \to i + 1)_{i\in I})$ as in~\autoref{ssec:quivers}. We label the nilpotent $G_0$-orbits in $\Lg(\mathbf{d})_1$ by multi-segments $\bs $ on $I$ such that $\mathbf{d}_{\bs }=\mathbf{d}$. Given a multi-segment $\bs  = \sum_{[a,b]} c_{[a,b]} [a, b]$, we write $\cO_{\bs }$ for the locally closed subset of $\fg_1$ formed by the nilpotent elements labelled by $\bs$.  Let $x\in\cO_{\bs}$. The Jordan basis of $x$ in $V$ corresponding to $[a,b]\in\bs $ can be chosen as $\{x^iv_a\mid 0\leq i\leq b-a\}$, where $v_a\in V_a$.  In particular, the sizes of Jordan blocks of $x\in\cO_{\bs }$ are given by
$$\sum(b-a+1)^{c_{[a,b]}}$$
where the superscript denotes the multiplicity of $b-a+1$.

In types $\iiA$, $\bC$ and $\BD$, the multi-segments that label nilpotent elements are characterised as follows:
\begin{lemm}\label{lem:multi-seg}
    A multi-segment $\bs  = \sum_{[a,b]} c_{[a,b]} [a, b]$ with $\mathbf{d}_{\bs }=\mathbf{d}$ labels elements in $ \fg(\mathbf{d})_1$ if and only if $\bs  = \bs ^*$ and $2\mid c_{[a,b]}$ whenever
    \[
    \begin{cases}
    2\not\mid (b - a) & \BD; \\
    2\mid (b - a) & \bC; \\
    [2\not\mid (b - a) ]\wedge [m\mid(a+b)] \text{ or } [2\mid (b - a) ]\wedge [m\not\mid(a+b)] & \iiA. \\
    \end{cases}
    \]
    Moreover, $\cO_{\bs}$ forms a single $G_0$-orbit unless $c_{[a,b]} \neq 0 \Rightarrow 2 \nmid (b - a)$ in type $\BD$. 
\end{lemm}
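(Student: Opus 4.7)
The plan is to reduce the statement to the known classification of nilpotent orbits in the ambient type $\bA$ Lie algebra $\fsl(V)$ and then analyse the constraints imposed by the bilinear form and by the connected components of $G_0$.

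First I would recall the type $\bA$ classification: a nilpotent element $x\in\fsl(\bd)_1$ is the same as a representation of the cyclic quiver $Q$ with dimension vector $\bd$ on which $x$ acts nilpotently. By Gabriel's theorem for the cyclic quiver (or Kac's generalisation), such a representation decomposes into a direct sum of indecomposables, each of which is a ``string'' determined by a segment $[a,b]$; the multiplicities of these strings give precisely the multi-segment $\bs$ with $\bd_{\bs} = \bd$. This establishes the untwisted case and identifies $\cO_{\bs}$ inside $\fsl(\bd)_1$.

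Next I would transport the non-degenerate form $(\relbar,\relbar)$ on $V$ through the nilpotent. Write $\tau$ for the adjoint involution that cuts out $\fg(\bd)$ from $\fsl(\bd)$ (the ordinary transpose in types $\bC,\BD$; the twisted version with the factor $\zeta_m^k$ in type $\iiA$). Because $\tau(x)$ is conjugate to $-x$ and one checks that the indecomposable labelled by $[a,b]$ gets sent under $\tau$ to the one labelled by $[-b,-a]$, the orbit $\cO_{\bs}$ meets $\fg(\bd)_1$ if and only if $\bs = \bs^*$. This explains the self-duality requirement.

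Once $\bs = \bs^*$, I would pair up the indecomposable summands of $x$: non-self-dual pairs $[a,b]\not\equiv[-b,-a]$ can always be put into hyperbolic position (their multiplicities are automatically compatible by $\bs=\bs^*$) and contribute no further constraint. For a self-dual indecomposable $W_{[a,b]}$ with basis $\{v_a, xv_a,\dots, x^{b-a}v_a\}$, the induced form is determined by the pairing $(v_a, x^{b-a}v_a)$. A direct computation using the relation $(xv,w) = \pm(v,xw)$ (respectively the $\zeta_m$-twisted version in $\iiA$) shows that this pairing can be made non-zero, symmetric (resp.\ alternating, resp.\ of the required twisted parity) precisely when the parity of $b-a$ matches the sign of the form; otherwise no self-dual indecomposable admits such a form individually, and the contribution of $[a,b]$ must appear in an even number of copies to be realised as a sum of hyperbolic pairs. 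Running through the three sign conventions gives exactly the three cases listed in the lemma.

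Finally, for the single orbit claim I would use Witt's theorem: in types $\bC$ and $\iiA$, the stabiliser group $G_0$ acts transitively on isomorphic decompositions of $(V,(\relbar,\relbar),x)$, so $\cO_{\bs}$ is always a single $G_0$-orbit. In type $\BD$, the graded isometry group is disconnected; the $G_0$-orbit equals the full isometry orbit unless every indecomposable summand has even length, i.e.\ unless $c_{[a,b]}\ne 0 \Rightarrow 2\nmid(b-a)$, which is the classical ``very even'' phenomenon in type $D$. The main technical point is the parity computation on a single self-dual indecomposable, which dictates the divisibility pattern for each type; once this is isolated, everything else is bookkeeping with Witt's theorem and connected components.
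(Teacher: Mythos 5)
Your proposal follows essentially the same route as the paper: establish $\bs=\bs^*$ by transporting $x$ through the duality $V\cong V^*$ induced by the form (the paper packages this as a contragredient operator $x^*$ on $V^*$, you as an adjoint involution $\tau$, but these are the same computation), then read off the multiplicity constraints from the parity of self-dual indecomposables. The paper's written proof stops after the self-duality step, explicitly leaving the parity conditions "to the reader" and not addressing the single-orbit claim at all, so your hyperbolic-pair/self-dual-summand analysis and the very-even/Witt argument fill in exactly the gaps the authors chose to omit; the one place where you are slightly less careful than the paper is elsewhere (namely the companion \autoref{lem:dist-multi-seg}), is the lift through $\Spin(V)\to\SO(V)$ in type $\BD$, but your identification of the disconnectedness with the very-even phenomenon is the correct mechanism.
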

\begin{proof}
We define a contragredient action of $\fg$ on the linear dual $V^*$ by $(xf)(v) = -f(xv)$ for $x\in \fg$ and $(v, f)\in V\times V^*$. 
Let $x\in \fg_1^{\nil}$. We define an operator $x^*$ on $V^*$ by 
\[
    (x^*f)(v) = \begin{cases}-f(xv) & \BD, \bC \\ -\zeta_mf(xv) & \iiA\end{cases},\quad  \text{for $(v, f)\in V\times V^*$}.  
\]
If $V = \bigoplus_{k} M_k$ is a decomposition into indecomposable $I$-graded $\C[x]$-modules, then $V^* = \bigoplus_{k} M^*_k$ with $M^*_k = \bigcap_{l\neq k} (M_l)^{\perp}$ is a decomposition into indecomposable $I$-graded $\C[x^*]$-modules. Let $\bs $ be the multi-segment associated with $(V, x)$, then the dual $\bs ^*$ is the multi-segment associated with $(V^*, x^*)$. 
The bilinear form $(, )$ on $V$ being non-degenerate, induces an isomorphism $V\xrightarrow{\sim} V^*,\; v\mapsto (v,\relbar)$ which intertwines $x$ and $x^*$ and induces $V_i \cong (V_{-i})^*$ for $i\in I$. It follows that $\bs  = \bs ^*$. \par
The well-known parity conditions of partitions for symplectic and orthogonal groups generalise easily to that of the segments in the graded setup. We leave the details to the reader.  
\end{proof}

\subsection{Affine root systems and Kac coordinates}\label{ssec:affine}
For $(G, \theta)$ of exceptional type, it is more convenient to describe $\fg_*$ in terms of affine root systems and Kac coordinates. See~\cite[\S 2]{RLYG} and~\cite[\S 3]{LY18} for relations between affine root systems and graded Lie algebras. \par
Let   $E= (B, T, a:U/[U,U]\to \mathbb{G}_a)$ a pinning for $G$. Let $\sigma\in \Aut_E(G)$ be a pinned automorphism. We have $\ord \sigma \in \{1, 2, 3\}$. Set $T_0 = T^{\sigma}$. Then, we can associate with $(G, \sigma, T)$ an affine root system $(V, \Phi)$ and an affine Weyl group $\Waff$, where $V = \mathbf{X}_*(T_0)\otimes_{\Z} \Q$ and $\Phi$ is a set of affine functions on $V$.  

The choice of the Borel subgroup $B$ yields an affine basis $\Delta\subset \Phi$. There is a unique family of positive integers $(b_{\alpha})_{\alpha\in \Delta}$ determined by the conditions:
\beq\label{eqn-bi}
\sum_{\alpha\in \Delta}b_{\alpha}\alpha = \frac{1_V}{\ord \sigma}\quad\text{and}\quad \gcd\{b_{\alpha} \mid \alpha\in \Delta\} = 1.
\eeq
Here, $1_V$ is the constant function with value $1$ on $V$. \par

Each point $x\in V$ determines a system of coordinates $\left(\alpha(x)\right)_{\alpha\in\Delta}$. Let $m_x\ge 1$ be the least common denominator of the set $\{\alpha(x)\}_{\alpha\in\Delta}$. We obtain a $\Z/m_x\Z$-grading on $\fg$, denoted by $\fg_{x,*}$, which satisfies $\fg_{\alpha}\subseteq \fg_{x, m_x\alpha(x)}$. We set $G_{x,0} = e^{\fg_{x,0}}\subseteq G$. The system $\left(m_x\alpha(x)\right)_{\alpha\in\Delta}$ is called the Kac coordinates of $x$. \par
The fundamental alcove associated with $\Delta$ is defined by $$\kappa = \{x\in V\mid\alpha(x) \ge 0\;,\; \alpha\in \Delta\}.$$ 

\subsection{Labelling of distinguished nilpotent orbits}\label{ssec:nilp}
 
Let $(G, B, T, \sigma, V,\Phi, \Delta)$ be as in~\autoref{ssec:affine}. Let $x\in V$ and write $m = m_x$ and $\fg_* = \fg_{x, *}$. Given a $G_0$-distinguished nilpotent orbit $\cO\subseteq \fg_1^{\nil}$, we pick an $\fsl_2$-triple $(e, h, f)$ such that $e\in \cO$, $h\in \fg_0$ and $f\in \fg_{-1}$. Let $\varphi:\Cc\to G_0$ be the cocharacter characterised by $d\varphi(1) = h$. We may choose $(e, h, f)$ such that $\im\varphi\subseteq T_0$.

Set 
\[
\fl = \bigoplus_{n\in \Z} \fl_n,\quad \fl_n = \li^{\varphi}_{2n}\fg_n,\quad L = \exp(\fl),\quad L_0 = \exp(\fl_0).  
\]
where $$\li^\varphi_{n}{\fg}=\{z\in\fg\mid\on{Ad}_{\varphi(t)}z=t^nz,\,\forall\,t\in\Cc\}\text{ and } 
{}^\varphi_{n}{\fg}_k={}^\varphi_{n}\fg\cap \fg_k, \text{ for $k\in\Z/m\Z$}.$$
The $\bZ$-grading on $\fl$ is $1$-rigid in the sense of~\cite{L95}, and the orbit $\cO_L = \Ad_L e$ satisfies $\cO_L\cap \fl_1 = \cO \cap \fl_1 = $ the unique open $L_0$-orbit in $\fl_1$. 
 
The intersection $B_L := B\cap L$ is a Borel subgroup of $L$ containing the maximal torus $T_0$. It gives rise to a basis $\Delta_L$ of the root system $\Phi(L, T)$. We may identify $\Delta_L$ as a subset $\Delta_L\subset\Delta$ with $\#(\Delta \setminus \Delta_L) = 1$. In addition, we may choose $(e, h, f)$ such that $\langle \alpha, \varphi\rangle\ge 0$ for every $\alpha\in \Delta_L$. Then, we have $\langle \alpha, \varphi\rangle\in \{0, 2\}$ for $\alpha\in \Delta_L$. We define the weight function associated with $\cO_L$ by
\beq\label{eqn-rho}
\rho:\Delta_L\to \{0,1\},\quad \rho(\alpha) = \langle \alpha, \varphi\rangle/2. 
\eeq
It is known (see~\cite[\S 5.6]{C93}) that $\rho$ is independent of the choice of the $\fsl_2$-triple $(e, h, f)$ and the Borel pair $(B, T)$, and it determines the orbit $\cO_L$, as well as the grading $\fl_*$.  \par
We summarise the above discussion as follows:
\begin{lemm}\label{lem:dist-orbit}
    There is a well-defined injective map
    \[
        \{\text{$G_0$-distinguished orbit }\cO\in \fg^{\nil}_1/G_0 \}\to \bigsqcup_{\Delta'\subset \Delta} \{0,1\}^{\Delta'},\quad \cO \mapsto (\Delta_L, \rho).
    \]
\end{lemm}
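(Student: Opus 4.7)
The plan is to establish both well-definedness and injectivity by combining the rigidity of the spiral Levi data $(\fl_*, \cO_L)$ attached to a $G_0$-distinguished nilpotent orbit with the classical weighted Dynkin diagram classification of distinguished nilpotent orbits applied to the $\Z$-graded Lie algebra $\fl_*$. For well-definedness of $\cO \mapsto (\Delta_L, \rho)$, note that any two $\fsl_2$-triples $(e, h, f)$ with $e\in \cO$, $h\in \fg_0$, $f\in \fg_{-1}$ are related by $G_0$-conjugation: the graded Jacobson--Morozov theorem produces such triples for each $e\in \cO$, while Kostant's uniqueness result in the $\theta$-equivariant setting (cf.~\cite{Vi77}) shows that any two such triples with the same $e$ are conjugate by $Z_{G_0}(e)$. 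Consequently the cocharacter $\varphi$, the spiral Levi $L$, and the $\Z$-grading $\fl_*$ are determined up to simultaneous $G_0$-conjugation. The normalisations $\im\varphi\subseteq T_0$ together with the requirement that $B_L = B\cap L$ be a Borel of $L$ with $\langle\alpha,\varphi\rangle\ge 0$ for $\alpha\in\Delta_L$ remove the residual ambiguity, since the joint stabiliser of $(T_0, B_L)$ in $N_{G_0}(T_0)$ is trivial. So $(\Delta_L, \rho)$ is a well-defined invariant of $\cO$.

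For injectivity, suppose $\cO^1$ and $\cO^2$ are two $G_0$-distinguished orbits producing the same pair $(\Delta_L, \rho)$. The datum $\Delta_L\subseteq\Delta$ reconstructs the $\theta$-stable standard Levi $L\subseteq G$ together with the standard Borel $B_L\subseteq L$, and $\rho$ pins down the cocharacter $\varphi$ through the rule $\langle\alpha,\varphi\rangle = 2\rho(\alpha)$ on $\Delta_L$. This determines the $\Z$-graded Lie algebra $\fl_*$ completely. Since $\fl_*$ is $1$-rigid in the sense of~\cite{L95}, the weighted Dynkin diagram classification (cf.~\cite[\S 5.6]{C93}) applied to $\fl_*$ tells us that $\rho$ determines a unique distinguished $L_0$-orbit in $\fl_1$, namely the open $L_0$-orbit $\cO_L\cap\fl_1$. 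Under the spiral induction of~\cite{LY17a}, this open orbit lifts to a unique $G_0$-distinguished orbit in $\fg_1$, namely the $G_0$-saturation of $\cO_L\cap\fl_1$, whence $\cO^1 = \cO^2$.

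The main obstacle is justifying the bijection between $G_0$-distinguished orbits in $\fg_1$ arising from a given spiral and distinguished $L_0$-orbits in $\fl_1$, i.e.\ that spiral induction applied to the open orbit in $\fl_1$ recovers $\cO$ faithfully. I would invoke this from the Lusztig--Yun formalism~\cite{LY17a} and the graded version developed in the companion paper~\cite{LTVX}, rather than reprove it. A secondary technical point is the identification of $\Delta_L$ as a specific subset of $\Delta$ (not merely an abstract basis), which relies on the chosen normalisations placing the Borel $B_L$ compatibly with the pinning $E$ fixed in~\autoref{ssec:affine}.
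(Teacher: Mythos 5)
Your proof takes essentially the same route as the paper: both attach to $\cO$ a normalised datum $(e,h,f,\varphi,\fl_*,\Delta_L,\rho)$ via graded Jacobson--Morozov and the Kostant uniqueness theorem, and both rest on the weighted Dynkin diagram formalism of \cite[\S 5.6]{C93} applied to the $1$-rigid $\Z$-graded splitting $\fl_*$; the paper's own ``proof'' is simply the paragraph preceding the lemma, which cites Carter for the fact that $\rho$ is independent of the choices and determines $\cO_L$ and $\fl_*$. Two minor remarks. First, the claim that the joint stabiliser of $(T_0, B_L)$ in $N_{G_0}(T_0)$ is \emph{trivial} is literally false — it contains $T_0$ — but this is harmless since $T_0$ acts trivially on $(\Delta_L,\rho)$; what one actually needs is that after normalisation the residual ambiguity acts trivially on the pair, which is the usual dominant-representative argument implicit in Carter. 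Second, the final step of injectivity does not require the spiral-induction machinery of \cite{LY17a} or \cite{LTVX}: since $e\in\cO\cap\fl_1 = \cO_L\cap\fl_1$ and $\cO_L\cap\fl_1$ is a single $L_0$-orbit, one has directly $\cO = G_0\cdot e = G_0\cdot(\cO_L\cap\fl_1)$, so two distinguished orbits with the same $(\Delta_L,\rho)$ meet $\fl_1$ in the same set and hence coincide.
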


\subsection{Dimension estimates}\label{ssec:dim}

In this subsection we give some dimension estimates for later use and deduce some preliminary consequences in classical types.

Recall that a grading is called GIT stable if there exists a semisimple element $s\in\Lg_1$ such that $Z_{G_0}(s)$ is finite. Such gradings have been classified in~\cite{RLYG}. In what follows we use either Kac diagram or the label in~\cite{RLYG}, which we refer to as RLYG label, to denote the gradings on exceptional Lie algebras.

\subsubsection{Classical types}
\begin{lemm}\label{lem:dim}
\begin{enumerate}
\item In type $\mathbf{A}$ with $G=\SL_N$ (and $m\geq 2$), there exists a non-orbital cuspidal character sheaf  only if $m|N$ and $\bd=(N/m,\ldots,N/m)$.
\item There are no cuspidal character sheaves in type $\li^2{\mathbf{A}}\mathbf{BCDII}$ since $\dim\Lg_1<\dim\Lg_0$.
\item In type $\li^2{\mathbf{A}}\mathbf{BCDIII}$, all cuspidal character sheaves are bi-orbital, if exist, since $\dim\Lg_1\leq\dim\Lg_0$.
\end{enumerate}
\end{lemm}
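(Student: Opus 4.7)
My plan is to reduce all three statements to direct dimension counts. By~\autoref{ssec:strategy}, existence of any cuspidal character sheaf on $\fg_1$ forces $\dim\fg_1 \ge \dim\fg_0$, with equality precisely when the sheaf is bi-orbital. I will therefore compute $\dim\fg_0 - \dim\fg_1$ in the relevant families from the explicit quiver descriptions in~\autoref{ssec:quivers}. In every case a routine expansion, completing the square in $\sum_i d_i^2 - \sum_i d_i d_{i+1}$, produces an identity of the form
\[
2(\dim\fg_0 - \dim\fg_1) = (\text{boundary correction}) + \sum_i (d_i - d_{i+1})^2,
\]
where the boundary correction records the mismatch between $d(d\pm 1)/2$, the dimensions of the extremal $\SO$ or $\Sp$ factors of $G_0$, and the parallel $\on{S}^2$ or $\bigwedge^2$ summands of $\fg_1$.

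For part (1), cyclic summation in type $\mathbf{A}$ collapses the identity to $\dim\fg_1 - \dim\fg_0 = 1 - \tfrac12 \sum_{i\in \Z/m\Z}(d_i - d_{i+1})^2$. A non-orbital cuspidal sheaf requires the left-hand side to be at least $1$, hence $\sum(d_i-d_{i+1})^2 \le 1$; the value $1$ is ruled out by the cyclic telescoping identity $\sum_i(d_{i+1}-d_i)=0$, so the sum vanishes, all $d_i$ are equal, and I conclude $\bd = (N/m,\ldots,N/m)$ with $m \mid N$.

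For part (2), I will carry out the same expansion on each of the families $\iiAII$, $\CII$, $\DII$ in turn. In every case the boundary correction assembles to $d_{\mathrm{top}} + d_{\mathrm{bot}}$, the sum of the dimensions of the two extremal graded components of $V$; these are non-negative, and as long as $V \neq 0$ their sum is strictly positive. Hence $\dim\fg_0 > \dim\fg_1$, precluding any cuspidal sheaf.

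For part (3), on each of $\iiAIIIi$, $\iiAIIIii$, $\CIII$, $\BDIII$ the two endpoints contribute with opposite signs, yielding an identity of the form $2(\dim\fg_0 - \dim\fg_1) = \pm(d_0 - d_l) + \sum_{i=0}^{l-1}(d_i - d_{i+1})^2$. The elementary inequality $n^2 \ge |n|$ for $n \in \Z$, together with the triangle inequality, gives
\[
\sum_{i=0}^{l-1}(d_{i+1}-d_i)^2 \ge \sum_{i=0}^{l-1}|d_{i+1}-d_i| \ge \Bigl|\sum_{i=0}^{l-1}(d_{i+1}-d_i)\Bigr| = |d_l - d_0|,
\]
which forces $\dim\fg_0 \ge \dim\fg_1$, so that any cuspidal sheaf must be bi-orbital. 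The main obstacle in the whole argument is not the final inequality, but the case-by-case bookkeeping required to match each endpoint $\GL$, $\SO$, or $\Sp$ factor of $G_0$ with the correct $\on{S}^2$ or $\bigwedge^2$ piece of $\fg_1$, so as to pin down the boundary correction and its sign; once this pairing is extracted from~\autoref{ssec:quivers}, each of the identities becomes routine.
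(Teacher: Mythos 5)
Your overall approach, completing the square in the quiver dimension formula, is exactly what the paper does, and your identities are the right ones. Two remarks, one a genuine (if small) gap, one a stylistic difference worth noting.

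In part (2), the claim that the boundary correction $d_{\mathrm{top}} + d_{\mathrm{bot}}$ is strictly positive whenever $V \neq 0$ is not correct as stated. In, say, type $\CII$ with $I = \Z/m_0\Z$, one can take $V_0 = V_l = 0$ while some interior $V_i \neq 0$ (and dually in $\DII$ and $\iiAII$ with half-integer indices). In that degenerate configuration both extremal dimensions vanish, so $d_{\mathrm{top}} + d_{\mathrm{bot}} = 0$. The strict inequality $\dim\fg_0 > \dim\fg_1$ still holds, but not for the reason you give: when the boundary term vanishes, the sum $\sum_i(d_i - d_{i+1})^2$ is forced to be positive (the chain from a vanishing endpoint to a nonvanishing interior vertex contributes a nonzero squared difference). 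The correct statement is that, for $V \neq 0$, the two non-negative contributions cannot vanish simultaneously. You should patch this, since otherwise a reader may (incorrectly) infer that $d_{\mathrm{top}}, d_{\mathrm{bot}}$ are always positive.

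In part (3), your route differs from the paper's. The paper leaves the linear terms paired with the squares so that each summand is literally $n^2 \pm n = n(n\pm 1) \geq 0$ for integer $n$, and concludes term-by-term. You instead telescope the linear part into a single boundary term $\pm(d_0 - d_l)$ and then dominate it by $\sum |d_i - d_{i+1}| \leq \sum (d_i - d_{i+1})^2$ plus the triangle inequality. Both are valid; yours trades a slightly longer chain of inequalities for a uniform phrasing across the four type-$\mathbf{III}$ families, and it also makes transparent why the boundary signs on the two ends must be opposite for the inequality to be non-strict. For part (1) the arguments are essentially identical; your telescoping observation that a single $\pm 1$ difference cannot sum cyclically to zero is a reasonable way to fill the (implicit) gap in the paper's one-line conclusion, though the integrality of $\dim\fg_1 - \dim\fg_0$ already rules out $\sum(d_i-d_{i+1})^2 = 1$ directly.
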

\begin{proof}
Recall $d_i = \dim V_i$, $i\in I$. 
\begin{enumerate}
\item{Type $\mathbf{A}$.} 
$\displaystyle{\dim\Lg_1-\dim\Lg_0=\sum_{i=0}^{m-1}(d_id_{i+1}-d_i^2)+1=1-\frac{1}{2}\sum_{i=0}^{m-1}(d_i-d_{i+1})^2.}$

Thus $\dim\Lg_1>\dim\Lg_0$ if and only if $d_0=\ldots=d_{m-1}$. 

\item Type $\iiAII$.  
$\displaystyle{\dim\fg_0-\dim\fg_1=\frac{1}{2}\sum_{i=1}^{l}(d_{i-1/2}-d_{i+1/2})^2+\frac{d_{1/2}+d_{l+1/2}}{2}>0}.
$

\item {Type $\CII$.} 
$\displaystyle{\dim\fg_0-\dim\fg_1=\frac{1}{2}\sum_{i=1}^{l-1}(d_i-d_{i-1})^2+\frac{d_l+d_0}{2}>0}.$

\item{Type $\BDII$.} 
$\displaystyle{\dim\fg_0-\dim\fg_1=\frac{1}{2}\sum_{i=1}^{l-1}(d_{i+1/2}-d_{i-1/2})^2+\frac{d_{1/2}+d_{l-1/2}}{2}>0.}$

\item{Type $\iiAIIIi$.} $\displaystyle{\dim\fg_0-\dim\fg_1 
=\frac{1}{2}\sum_{i=0}^{l-1}\left((d_i-d_{i+1})^2+(d_i-d_{i+1})\right)\geq 0}$.

\item{ Type $\iiAIIIii$.}  
$\displaystyle{\dim\fg_0-\dim\fg_1=\frac{1}{2}\sum_{i=1}^{l-1}\left((d_{i+1/2}-d_{i-1/2})^2-(d_{i+1/2}-d_{i-1/2})\right)\geq 0}$.

\item{ Type $\mathbf{BCDIII}$.}
$\displaystyle{\dim\fg_0-\dim\fg_1=\frac{1}{2}\sum_{i=1}^{l}\left((d_i-d_{i-1})^2+(d_i-d_{i-1})\right)\geq 0\,.}$

\end{enumerate}

 \end{proof}

\subsubsection{Exceptional types}
We have the following non GIT stable gradings with positive rank that satisfy $r:=\dim\fg_1-\dim\fg_0\geq 0$. 
\begin{longtable}{p{1cm}|p{7cm}|p{4cm}|p{2.5cm}}
\hline
Type&$r=0$&$r=\dim\fc$&$0<r<\dim\fc$\\
\hline
$F_4$&$\xymatrix@C=6pt{{\substack{0\\\circ\\\,}}\ar@{-}[r]&{\substack{1\\\bullet\\\,}}\ar@{-}[r]&{\substack{0\\\bullet\\\,}}\ar@2{=>}[r]&{\substack{0\\\bullet\\\,}}\ar@{-}[r]&{\substack{1\\\bullet\\\,}}}\ (m=4)$&&\\
\hline
$E_6$&$4_a,4_b,5_a,8_a,8_b$&&$2_a$\\
\hline
$^2E_6$&$8_f,8_c,10_a,10_c$&$10_b$&\\
\hline
$E_7$&$5_a,7_a,8_b,9_a,9_b,12_b$&$4_a,8_a,10_a,10_b,12_a$&$3_a$\\
\hline
$E_8$&$ 4_b,6_b,7_a,8_b,8_c,8_f,9_c,10_c,10_d$, $14_b,14_c,14_d,18_b,18_d$&$9_a,12_e,14_a,18_c$\\
\hline
\caption{Non GIT stable positive rank gradings with $r\geq 0$}\label{table 2}
\end{longtable}
We can compute the dimensions $\dim\Lg_1$ and $\dim\Lg_0$ using~\cite[Proposition 8.6]{Ka}, which says the following
\begin{enumerate}
\item Let $i_1,\ldots,i_a$ be the indices of the affine Dynkin diagram such that $s_{i_j}=0$. Then the Lie algebra $\Lg_0\cong\cZ_{\Lg_0}\oplus(\Lg_0)_{\on{der}}$, where $\cZ_{\Lg_0}$ is the center, which is of $\ell-a$ dimensional, and $(\Lg_0)_{\on{der}}$ is a semisimple Lie algebra whose Dynkin diagram is the subdiagram of the affine Dynkin diagram consisting of the vertices $i_1,\ldots,i_a$. 
\item Let $j_1,\ldots,j_b$ be the indices of the affine Dynkin diagram such that $s_{j_k}=1$. Then the $\Lg_0$-module $\Lg_1$ is isomorphic to a direct sum of $b$ irreducible modules with highest weights $-\alpha_{j_1},\ldots,-\alpha_{j_b}$. 
\end{enumerate}
\begin{exam}The grading $10_b$ of ${}^2E_6$. The Kac diagram is $\xymatrix{1\,1\,0&1\,0\ar@{=>}[l]}$.
So we have $\Lg_0\cong\cZ_{\Lg_0}\oplus(\Lg_0)_{\on{der}}$ where $\on{dim}\cZ_{\Lg_0}=2$ and $(\Lg_0)_{\on{der}}\cong\mathfrak{sl}_2\oplus\mathfrak{sl}_2$. Thus $\dim\Lg_0=8$.

As an $\mathfrak{sl}_2\oplus\mathfrak{sl}_2$-module, $\Lg_1\cong\mathbf{1}\oplus L(\omega_1)\boxtimes\mathbf{1}\oplus L(2\omega_1)\boxtimes L(\omega_1)$, where $L(\lambda)$ denotes the irreducible module of $\mathfrak{sl}_2$ with highest weight $\lambda$, and $\omega_1$ denotes the fundamental weight. Thus $\dim\Lg_1=9$.
\end{exam}

\section{Main results}

\label{sec:statements}
In this section we state our classification results explicitly type-by-type. We classify the gradings that afford cuspidal character sheaves and determine the support(s) of cuspidal character sheaves. In particular, we deduce~\cite[Conjecture 4.8]{X24} as a consequence (see~\autoref{cons-type A}), which determines the cuspidal character sheaves in type $\bA$.

When $G=\Spin(V)$, we write $\varepsilon = \varepsilon_V\in Z(G)$ for the generator of the kernel of the double cover map $\Spin_N\to \SO_N$.

We show that all {\em non-orbital} cuspidal character sheaves have full support (i.e., support equals $\Lg_1$) unless $G$ is of type $A$, $B$, $D$, $E_6$, or $E_7$. In the latter case the cuspidal character sheaves have non-trivial central characters, i.e., $Z(G)^\theta$ acts non-trivially. In particular, when $G=\Spin_N$, if a {\em non-orbital} cuspidal character sheaf  does not have full support, then  it has non-trivial $\varepsilon\in Z(G)$ action. \par

Throughout this section, we let $(G, \sigma, \theta, m, \fg_*, \fc)$ be as in~\autoref{sec:quiver}. 
We describe the gradings that afford cuspidal character sheaves in the following theorems. In each case, we also determine the support of the cuspidal character sheaves.

\subsection{Gradings affording cuspidal character sheaves and supporting strata of cuspidal character sheaves}
Recall $$r=\dim\fg_1-\dim\fg_0$$ and the notations for cyclic quivers $\fg_* = \fg(\bd)_*$ and $\bd = \dim_I V$ from~\autoref{ssec:gradings} and multi-segments $\bs$ from~\autoref{ssec:multiseg}.
\begin{theo}\label{thm-classical}
Let $(G, \theta)$ be of classical type.  
The gradings $\fg_*$ 
that afford a cuspidal character sheaf and the support of the cuspidal character sheaves are as follows:
\begin{enumerate}
\item {\bf Type $\bA$} Let $(G, \sigma)=(\SL_N,\id_G)$ and $m\geq 2$. 
\begin{enumerate}[label=(\alph*),font=\itshape]
\item $m\nmid N$ and $\bd=\bd_{\bs }$, where $\bs =[a,N+a-1]$ for some $a$. In this case,
\begin{enumerate}
\item[{\rm(i)}] $r=0$
\item[\rm{(ii)}] the support of the cupsidal character sheaves is $\bar \cO_{\bs }$.
\end{enumerate}
\item $m\mid N$ and the dimension vector  $\mathbf{d}=(d,d,\cdots,d)$ with $d=N/m$. In this case,
\begin{enumerate}
\item[{\rm(i)}] $r=1$
\item[{\rm(ii)}] the supporting stratum of the cuspidal character sheaves is 
$$\cS=G_0\cdot (\fc_s+n),\ n\in\cO_{\mathbf{s}},\  \bs =\sum_{i=0}^{m-1}[i,d+i-1]$$
where $\fc_s\subset\fc$ is the unique $\fc$-stratum such that $Z_G(\fc_s)\cong \operatorname{S}(\GL_d^{\times m})$  and such that $ \theta|_{Z_G(\fc_s)_{\on{der}}}$ permutes cyclically the $m$-factors of $\SL_d$.
\end{enumerate}
\end{enumerate}
\item {\bf Type $\iiAI$ and $\CI$}
The gradings described in~\cite{VX24}, i.e., the gradings such that $\mathbf{d}=\mathbf{d}_{\bs }$, where 
$$\bs =\begin{cases}\displaystyle{\sum_{k=0}^{2l}r[k,k]+\sum_{i=0}^{p-1}[l-i,l+1+i]+\sum_{j=0}^{q-1}[-j,j]}&\text{$\iiAI$}\\
\displaystyle{\sum_{k=0}^{2l-1}r[k+\tfrac{1}{2},k+\tfrac{1}{2}]+\sum_{i=0}^{p-1}[l-i-\tfrac{1}{2},l+i+\tfrac{1}{2}]+\sum_{j=0}^{q-1}[-j-\tfrac{1}{2},j+\tfrac{1}{2}]}&\text{$\CI$}\end{cases}$$
for some $p,q\in\mathbb{Z}_{\geq0},$ and $p+q\leq l$ in type $\iiAI$, $p+q\leq l-1$ in type $\CI$.  In these cases 
\begin{enumerate}
\item[{\rm(i)}]
$r=\dim\fc$
\item[\rm{(ii)}] all cuspidal character sheaves have full support.

\end{enumerate}
\item {\bf Type $\BDI$} Let $G=\Spin_N$.
\begin{enumerate}[label=(\alph*),font=\itshape]
\item The gradings described in~\cite[\S8.3]{VX24}. That is, $\mathbf{d}=\mathbf{d}_{\bs }$, where 
$$\bs =\displaystyle{\sum_{k=0}^{2l-1}r[k,k]+\sum_{i=0}^{p-1}[l-i,l+i]+\sum_{j=0}^{q-1}[-j,j]}$$
for some $p,q\in\mathbb{Z}_{\geq0},$ and $p+q\leq l$. In this case 
\begin{enumerate}
\item[{\rm(i)}]
$r=\dim\fc$
\item[{\rm(ii)}] all cuspidal character sheaves have full support and have trivial action by $\varepsilon\in Z(G)$.
\end{enumerate}
\item  The gradings such that $\mathbf{d}=\mathbf{d}_{\bs }$, where \[\quad\bs =\sum_{k=0}^{2l-1}r[k,k]+\sum_{i=0}^{\lfloor\frac{a-1}{2}\rfloor}[-2i-\delta_a,2i+\delta_a]+\sum_{i=0}^{\lfloor\frac{b-1}{2}\rfloor}[l-2i-\delta_b,l+2i+\delta_b]\]  for some $a,b\in\bZ_{\geq 0}$ such that
$$\text{either $a+b\leq l$, or ($2\mid(l-a-b)$ and $|a-b|\leq l$}).$$
 Here $\delta_n:=n-1-2\lfloor\frac{n-1}{2}\rfloor$. In this case
 \begin{enumerate}
\item[{\rm(i)}]$r=\dim\fc$ if and only if $a+b\leq l$.
  
 \item[{\rm(ii)}] All cuspidal character sheaves have non-trivial $\varepsilon$-action. They are supported on $\overline\cS$, where
$$\cS=G_0\cdot(\fc_s+n),\ n\in\cO_{\bs}$$
and $\fc_s\subset\fc$ is the unique stratum such that $Z_G(\fc_s)$ is totally ramified,  $Z_G(\fc_s)_{\on{der}}\cong\Spin_{N-2lr}$  and such that $ \theta|_{\fh:=Z_\fg(\fc_s)_{\on{der}}}$ has $\mathbf{d}_{\fh}=\mathbf{d}_{\bs-\sum_{k=0}^{2l-1}r[k,k] }. $
 
\end{enumerate}
\end{enumerate}

\item{\bf Type $\iiAIII$ and $\CIII$} The gradings such that $\mathbf{d}=\mathbf{d}_{\bs }$, where
$$\bs =\begin{cases}\displaystyle{\sum_{i=0}^k[-i,i]}&\text{type $\iiAIIIi$}\\\displaystyle{\sum_{i=0}^k[l-\tfrac{1}{2}-i,l+\tfrac{1}{2}+i]}&\text{type $\iiAIIIii$}\\\displaystyle{\sum_{i=0}^k[l-i,l+1+i]}&\text{type $\CIII$}\,\end{cases}$$
for some $k\in\mathbb{Z}_{\geq 0}$.
 In this case 
 \begin{enumerate}
\item[{\rm(i)}]
$r=0$
\item[{\rm(ii)}] there is a unique cuspidal character sheaf supported on the closure of $\cO_{\bs }$. 
\end{enumerate}

\item{\bf Type $\mathbf{BDIII}$} Let $$\bs_0=\sum_{i=0}^k[-i,i],\,\bs_1=\sum_{i=0}^{\lfloor\frac{k-1}{2}\rfloor}[-2i-\delta_k,2i+\delta_k]$$
for some $k\in\mathbb{Z}_{\geq 0}$.

\begin{enumerate}[label=(\alph*),font=\itshape]
\item The gradings such that $\mathbf{d}=\mathbf{d}_{\mathbf{s_0}}$. In this case \begin{enumerate}
\item[{\rm(i)}]
$r=0$
\item[{\rm(ii)}] there is a unique cuspidal character sheaf (with trivial $\varepsilon$-action), supported on the closure of $\cO_{\bs_0}$.
\end{enumerate}
\item The gradings such that $\mathbf{d}=\mathbf{d}_{\mathbf{s_1}}$. In this case
\begin{enumerate}
\item[{\rm(i)}]
$r=0$
\item[{\rm(ii)}]the cuspidal character sheaves are supported on the closure of $\cO_{\bs_1}$  and they have non-trivial $\varepsilon$-action.
\end{enumerate}

\end{enumerate}
\end{enumerate}

\end{theo}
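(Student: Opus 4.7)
We follow the general strategy of~\autoref{ssec:strategy}. By~\eqref{eqn-levi}, every cuspidal character sheaf on $\fg_1$ arises from a pair $(L, \cF_H)$ where $L \subseteq G$ is a $\theta$-stable Levi with $Z(\fl)_0 = 0$ and $\fl = Z_\fg(Z(\fl)_1)$, and $\cF_H$ is a bi-orbital cuspidal on the graded derived subgroup $H = L_{\on{der}}$. The proof splits into two tasks: (i) classify bi-orbital cuspidals on all classical graded simple Lie algebras, and (ii) for each $(H, \theta|_H)$ admitting such a bi-orbital cuspidal, enumerate the embeddings $H \hookrightarrow G$ as $L_{\on{der}}$ of a $\theta$-stable Levi $L$ as above.

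Task (i) is executed by the method of~\autoref{ssec:method}: bi-orbital cuspidals correspond to supercuspidal pairs $(\cO, \cC)$, for which the condition $\dim \fh_1 = \dim \fh_0$ is necessary. By the dimension formulas of~\autoref{lem:dim}, this eliminates types $\iiAII$, $\CII$, $\BDII$ entirely, forces $\bd = (d,\ldots,d)$ in type $\bA$, and severely restricts the allowed $\bd$ in types $\iiAIII$, $\CIII$, $\BDIII$. The remaining candidate supercuspidal pairs are then extracted from the Lusztig--Spaltenstein classification~\cite{L84,LS85,S85} by verifying the central-character compatibility with $\theta$; this type-by-type analysis is the content of~\S\,\ref{sec-bicus} and produces exactly the multi-segments $\bs_0$ and $\bs_1$ appearing in items (4) and (5), together with the rank-zero specialisations of the other items.

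For task (ii), a $\theta$-stable Levi $L$ with $Z(\fl)_0 = 0$ and $\fl = Z_\fg(Z(\fl)_1)$ corresponds, under the quiver description of~\S\,\ref{ssec:quivers}, to a direct-sum decomposition $V = V' \oplus V''$ in which the $V'$-summand contributes a ``central torus'' part (such as $\on{S}(\GL_{d}^{\times m})$ in type $\bA$) and $V''$ carries the bi-orbital grading for $H$. The resulting multi-segment $\bs$ is obtained by adding blocks of singleton segments $[k, k]$ to the multi-segment of the bi-orbital piece on $\fh_1$, yielding exactly the shape of $\bs$ in each item; the dichotomy $r = \dim \fc$ versus $r < \dim \fc$ reflects whether $L$ is conjugate to $Z_G(\fc)$ or to the centraliser of a proper $\fc$-stratum, as explained in~\autoref{ssec:strategy}. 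The supporting stratum is then recovered from $\cS = G_0 \cdot (Z(\fl)_1^\circ + x_n)$, giving the formulas in items (1)(b), (2), (3)(a), (3)(b), (4), (5). In types $\bA$, $\iiAI$, $\CI$ and $\BDI$(a), $\iiAIII$, $\CIII$, $\BDIII$(a) the final verification matches the prior results of~\cite{X24,VX24}.

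The principal obstacle lies in items $\BDI$(b) and $\BDIII$(b), where the double cover $\Spin(N) \to \SO(N)$ introduces a distinction between cuspidal sheaves with trivial and non-trivial $\varepsilon$-action, and where the relevant $\theta$-stable Levis are not simple products of $\GL$ factors with a smaller $\Spin$ but rather ``totally ramified'' extensions thereof. The parity constraint $2 \mid (l - a - b)$ when $a + b > l$ in item (3)(b) is the numerical translation of the existence of such a totally ramified Levi with the prescribed multi-segment, and non-triviality of the $\varepsilon$-action is equivalent to non-triviality of the central character of $\cF_H$ in the sense of~\autoref{ssec:method}. Identifying the complete list of such Levis and verifying that the induced cuspidal on $\fg_1$ carries the asserted non-trivial $\varepsilon$-action is the technical heart of~\S\,\ref{sec:classical}; the remaining items then follow without further subtlety from the above two-step scheme.
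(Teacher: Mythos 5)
Your two-task decomposition --- classify bi-orbital supercuspidal pairs, then enumerate the allowed Levi data --- mirrors the global strategy in~\autoref{ssec:strategy}, and your account of where each step lives (the method of~\autoref{ssec:method}, the dimension estimates of~\autoref{lem:dim}, the input from~\cite{L84,LS85}) is broadly accurate. Two issues need flagging.

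First, the claim that the bi-orbital condition ``forces $\bd=(d,\ldots,d)$ in type $\bA$'' is exactly reversed. By the formula in~\autoref{lem:dim}(1), one has $\dim\fg_1-\dim\fg_0 = 1 - \tfrac12\sum_i(d_i-d_{i+1})^2$, so $\bd=(d,\ldots,d)$ gives $r=1$ and hence parametrises the \emph{non}-orbital cuspidals of part (1)(b). The bi-orbital case $r=0$ requires $\sum_i(d_i-d_{i+1})^2=2$, which corresponds to $m\nmid N$ and $\bd=\bd_{[a,N+a-1]}$ as in part (1)(a). As written, your sketch conflates the two subcases of type~$\bA$.

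Second, your task~(ii) is less than what the paper actually proves. The paper does not merely enumerate Levi embeddings; it classifies $G_0$-distinguished elements directly (\autoref{prop:distinguished-II/III} and~\autoref{prop:distinguished-I}), and \autoref{prop:distinguished-I}(2)--(3) establish the crucial uniqueness: once $\bd$ is fixed, a distinguished $x=x_s+x_n$ with $\dim\fZ_{x,1}=r$ is determined up to conjugacy, and both the Levi $L=Z_G(x_s)$ and the supporting orbit are forced. Your sketch assumes this uniqueness implicitly when passing from ``enumerate embeddings'' to ``the supporting stratum is $\cS=G_0\cdot(Z(\fl)_1^\circ + x_n)$''; without it one cannot conclude that all cuspidal character sheaves of a fixed central character on a given grading share a common support (the statements in (3)(b)(ii), (4)(ii), (5)). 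That uniqueness argument is precisely what \autoref{prop:distinguished-I} supplies, and a self-contained version of your proposal would need to reproduce it.
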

The proof of~\autoref{thm-classical} is given in~\S\ref{sec:classical}.

\begin{coro}\label{cons-type A}
    Conjecture 4.8 in~\cite{X24} holds. That is, the cuspidal character sheaves in type $\bA$ are as described in {\em loc.cit.}
\end{coro}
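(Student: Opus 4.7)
The plan is to derive the corollary directly from \autoref{thm-classical}(1) by matching its conclusions with \cite[Conjecture 4.8]{X24}. The theorem partitions the type-$\bA$ gradings affording cuspidal character sheaves into two disjoint families: the $r = 0$ case (1)(a), where $m\nmid N$ and $\bd = \bd_\bs$ with $\bs = [a, N+a-1]$; and the $r = 1$ case (1)(b), where $m\mid N$ and $\bd = (d,\ldots,d)$ with $d = N/m$. The conjecture in \cite{X24} predicts the same partition of gradings together with the same supporting strata, so the proof reduces to a direct translation between the two sets of notation.

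First I would unpack case (1)(a): the multi-segment $\bs = [a, N+a-1]$ labels the unique regular nilpotent orbit in the cyclic-quiver representation with dimension vector $\bd$, and the cuspidal character sheaf is bi-orbital with support $\overline{\cO_\bs}$. This matches the orbital prediction in \cite{X24} verbatim. For case (1)(b), the Levi $L = Z_G(\fc_s) \cong \on{S}(\GL_d^{\times m})$ equipped with the cyclic permutation $\theta|_{L_{\on{der}}}$ is precisely the Levi appearing in the conjecture; the associated bi-orbital cuspidal on $\fl \cap \fg_1$ corresponds to the regular nilpotent orbit on the diagonal factor, translating to $\bs = \sum_{i=0}^{m-1}[i,d+i-1]$ via the identification of the fixed subalgebra with a copy of $\fsl_d$ and the cyclic $\bZ/m\bZ$-shift.

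To identify the cuspidal character sheaves as local systems (and not merely their supports), I would invoke the nearby-cycle construction of \cite{LTVX} together with the parametrisation of $G_0$-equivariant local systems on $\cS$ by representations of the braid group of $W_\fd$ extended by $\pi_0(Z_{G_0}(x))$, as described in the remark following \autoref{theo:cuspidal-cartan}. In case (1)(a) the braid group contribution is trivial and only the component group appears. In case (1)(b) the cuspidal Cartan subspace $\fd$ is one-dimensional, $W_\fd$ is cyclic, and its braid group is infinite cyclic; the corresponding labels match the cyclotomic data in \cite{X24} via the enumeration already used in \cite{VX23, VX24, X24}.

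The main obstacle is the bookkeeping in case (1)(b): identifying explicitly the $\fc$-stratum $\fc_s$ inside the Cartan subspace $\fc$, checking that the $\theta$-induced cyclic permutation of the $m$ factors of $\SL_d$ matches the action posited in \cite{X24}, and verifying that the induced local systems on $\cS$ coincide with those predicted by the conjecture. The first two of these are straightforward dimension counts combined with a direct inspection of the quiver description in \autoref{ssec:quivers}, and the last follows from the explicit computation of cuspidal local systems on $\fsl_d$ already in the literature. Once these identifications are made, the corollary is immediate.
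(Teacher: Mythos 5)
Your proposal is correct in outline and takes essentially the same approach as the paper: reduce Conjecture 4.8 of \cite{X24} to Theorem~\ref{thm-classical}(1). However, the paper's actual proof is a single sentence and is considerably more direct than your plan: it observes that part (1) of \autoref{thm-classical} shows the nil-supercuspidal data in type $\bA$ coincide exactly with the input of the nearby-cycle construction in \cite[Proposition~4.7]{X24} (in the case $k=1$). Since \cite{LTVX} already establishes that all cuspidal character sheaves arise from nil-supercuspidal data via nearby cycles, and \cite{X24} already describes the outputs of that construction, the conjecture follows immediately. Your second and third paragraphs, involving the braid group of $W_\fd$ and the explicit identification of local systems via cyclotomic data, redo work that \cite{X24} has already carried out: once the \emph{inputs} of \cite[Prop.~4.7]{X24} are matched against the nil-supercuspidal data, the \emph{outputs} match automatically. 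That bookkeeping is not wrong, but it is unnecessary, and invoking it obscures that the whole point of the corollary is that \autoref{thm-classical}(1) closes the one gap in \cite{X24}, namely that the nil-supercuspidal data are as described there.
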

\begin{proof}
    Part (1) of~\autoref{thm-classical} implies that the nil-supercuspidal data are exactly the input of the nearby cycle construction in~\cite[Proposition 4.7]{X24} in the case of $k=1$.
\end{proof}

Before we proceed to the exceptional types, we introduce the following notation. Let $\cO\subset\fg_{-1}^{\on{nil}}$ be a nilpotent $G_0$-orbit. Pick a normal $\fsl_2$-triple $\phi=(e,f,h)$ such that $e\in\cO$ and $h\in\Lg_0$. We write $G^\phi=Z_G(\phi)=Z_G(e)\cap Z_G(h)$ and $(G^\phi)^0$ the identity component of $G^\phi$.

\begin{theo}\label{thm-excp}
Let $(G,\theta)$ be of exceptional type. The gradings affording cuspidal character sheaves are
\begin{enumerate}
\item all GIT stable gradings (where $r=\dim\fc =$ rank of $\fg_*$): 
\begin{longtable}{p{1cm}|p{13cm}}
\hline
Type&Kac diagram/RLYG label\\
\hline
$G_2$&$\xymatrix@C=1em{{{0}}&{{1}}\ar@{-}[r]\ar@3{->}[l]&{{0}}}\,(m=2)$, $\xymatrix@C=1em{{{0}}&{{1}}\ar@{-}[r]\ar@3{->}[l]&{{1}}}\,(m=3)$, $\xymatrix@C=1em{{{1}}&{{1}}\ar@{-}[r]\ar@3{->}[l]&{{1}}}\,(m=6)$\\
\hline
$^3D_4$&$\xymatrix@C=1em{{{0}}&{0}\ar@{-}[l]&{1}\ar@3{->}[l]}\,(m=3)$, $\xymatrix@C=1em{{{1}}&{0}\ar@{-}[l]&{1}\ar@3{->}[l]}\,(m=6)$, $\xymatrix@C=1em{{{1}}&{1}\ar@{-}[l]&{1}\ar@3{->}[l]}\,(m=12)$\\
\hline
$F_4$&$\xymatrix@C=6pt{{\substack{0\\\circ\\\,}}\ar@{-}[r]&{\substack{1\\\bullet\\\,}}\ar@{-}[r]&{\substack{0\\\bullet\\\,}}\ar@2{=>}[r]&{\substack{0\\\bullet\\\,}}\ar@{-}[r]&{\substack{0\\\bullet\\\,}}}\ (m=2)$, $00100,\ (m=3)$, $10100\,(m=4)$, $10101\,(m=6)$, $11101\,(m=8)$, $11111\,(m=12)$\\
\hline\\
$E_6$&$3_a,6_a,9_a,12_a$\\
\hline
$^2E_6$&$2_a,4_b,6_a,12_b,18_a$\\
\hline
$E_7$&$2_a,6_a,14_a,18_a$\\
\hline
$E_8$&$2_a,3_a,4_a,5_a,6_a,8_a,10_a,12_a,15_a,20_a,24_a,30_a$
\\
\hline
\caption{GIT stable gradings}
\end{longtable}

\item the following non GIT stable gradings with $r > 0$

\begin{longtable}{p{1cm}|p{3.5cm}|p{3.5cm}}
\hline
Type&$r=\dim\fc$&$0<r<\dim\fc$\\
\hline
$E_6$&&$2_a$\\
\hline
$^2E_6$&$10_b$&\\
\hline
$E_7$&$4_a,8_a,10_b,12_a$&$3_a$\\
\hline
$E_8$&$9_a,12_e,14_a,18_c$
\\
\hline
\caption{Non GIT stable positive rank gradings affording cuspidals}\label{table 1}
\end{longtable}

\item the gradings with $r = 0$ listed in~\autoref{thm:bisup-excep} (we give more detailed account in this case).
\end{enumerate}

Moreover, 
\begin{enumerate}
\item when $r=\dim\fc$, all cuspidal character sheaves have full support.
\item when the grading is $2_a$ for $E_6$, the supporting stratum of the cuspidal character sheaves is $\widecheck{\cO}$, where $\cO$ is the unique nilpotent orbit in $\Lg_{-1}\cong\Lg_1$ such that $(G^\phi)^0$ is of type $G_2$ and such that $\theta|_{(G^\phi)^0}$ gives rise to the split symmetric pair for $G_2$. 
\item when the grading is $3_a$ for $E_7$, the supporting stratum of the cuspidal character sheaves is $\widecheck{\cO}$, where $\cO$ is the unique nilpotent orbit in $\Lg_{-1}$ such that $(G^\phi)^0$ is of type $F_4$ and such that $\theta|_{(G^\phi)^0}$ gives rise to the order 3 GIT stable grading for $F_4$. 

\end{enumerate}
\end{theo}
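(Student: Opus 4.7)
\smallskip

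The plan is to treat each exceptional type $\Gii,\iiiDiv,\Fiv,\Evi,\iiEvi,\Evii,\Eviii$ by running the trichotomy of~\autoref{ssec:strategy} against the complete list of Kac coordinates/RLYG labels from~\autoref{ssec:dim} and~\cite{RLYG}. Concretely, for each grading $\fg_*$ on $\fg$ I would compute $r=\dim\fg_1-\dim\fg_0$ using the recipe of~\cite[Prop.\ 8.6]{Ka} recalled after~\autoref{table 2}, together with the rank $\dim\fc$, and then classify by sign of $r$: $r<0$ gives no cuspidals; $r=\dim\fc>0$ reduces to checking condition~\eqref{eqn-levi} for $L=Z_G(\fc)$; the intermediate case $0<r<\dim\fc$ requires finding a proper $\fc$-stratum $\fc_s$ whose centraliser restricts to a graded semisimple Lie algebra affording bi-orbital cuspidals; the case $r=0$ is handled by the bi-orbital classification of~\autoref{sec-bicus}.

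For part (1), a GIT stable grading has $Z_G(\fc)$ a torus, so $L_{\on{der}}=1$ affords (trivially) a bi-orbital cuspidal character sheaf; \eqref{eqn-levi} is automatically satisfied with $L=Z_G(\fc)$, so $Z(\fl)_1=\fc$ is the cuspidal Cartan subspace and the support is all of $\fg_1$. This matches the list in the first table and gives the ``full support'' assertion in the concluding statement. Combined with the RLYG classification, this finishes this case.

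For the non-GIT-stable gradings of~\autoref{table 1} with $r=\dim\fc$, the Levi $L=Z_G(\fc)$ now has nontrivial semisimple derived subgroup; I would identify the type of $(L_{\on{der}},\theta|_{L_{\on{der}}})$ case-by-case via the Kac diagram (reading off the sub-diagram spanned by the nodes with label $0$) and then verify using~\autoref{sec-bicus} that this smaller graded Lie algebra indeed carries a bi-orbital cuspidal. For the two exceptional entries $2_a$ for $\Evi$ and $3_a$ for $\Evii$ (where $0<r<\dim\fc$), one needs to locate a $\fc$-stratum $\fc_s\subsetneq\fc$ with $\dim\fc-\dim\fc_s=r$ such that $Z_G(\fc_s)_{\on{der}}$ with the induced grading is one of the GIT-stable gradings of $\Gii$ (order $2$, i.e.\ the split symmetric pair) or of $\Fiv$ (the order $3$ GIT-stable grading) respectively. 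These are precisely the gradings on $\Gii,\Fiv$ that appear in~\autoref{table 1}'s first block and carry bi-orbital cuspidals. The support description in parts (2) and (3) of the theorem then follows because the unique such $\fc_s$ corresponds, via the Jacobson--Morozov theory, to an $\fsl_2$-triple $\phi$ with $(G^\phi)^0$ of the stated type, and the Levi condition~\eqref{eqn-levi} dictates precisely the grading restricted from $\theta$.

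For $r=0$ (part (3)), the existence of cuspidal character sheaves is equivalent, by~\autoref{ssec:strategy}(2), to the existence of bi-orbital cuspidals, which are classified in~\autoref{sec-bicus} via the method of~\autoref{ssec:method} using the labelling of $G_0$-distinguished orbits by $(\Delta_L,\rho)$ from~\autoref{lem:dist-orbit}. The list in~\autoref{thm:bisup-excep} is then obtained by enumerating cuspidal pairs $(\cO_L,\cC_L)$ on cuspidal Levis $L\subseteq G$ from~\cite{L84,LS85,S85} and selecting those whose central character is compatible with $\theta$. The main obstacle I anticipate is twofold: first, the bookkeeping for the intermediate case $0<r<\dim\fc$, where uniqueness of the $\fc$-stratum $\fc_s$ must be verified and the explicit identification with the $\Gii$/$\Fiv$ sub-pair needs a careful analysis of the action of $\theta$ on the centraliser; second, ruling out that other non-GIT-stable gradings not in~\autoref{table 1} might afford cuspidals, which requires showing that for each such grading every candidate $\theta$-stable Levi fails condition~\eqref{eqn-levi}, a finite but delicate check over the Kac diagrams.
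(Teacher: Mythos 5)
Your trichotomy by the sign of $r$ and by whether $r=\dim\fc$ matches the paper's strategy, and your handling of the GIT-stable case (where $Z_G(\fc)$ is a torus so \eqref{eqn-levi} is vacuous) and the $r=0$ case (appeal to~\autoref{thm:bisup-excep}) is correct. However, there is a genuine gap in your treatment of the non-GIT-stable gradings with $r=\dim\fc>0$. You propose to ``identify the type of $(L_{\on{der}},\theta|_{L_{\on{der}}})$ case-by-case via the Kac diagram (reading off the sub-diagram spanned by the nodes with label $0$).'' This computes $\fg_0$, not $Z_\fg(\fc)_{\on{der}}$. The nodes with Kac label $0$ span the Dynkin diagram of $(G_0)_{\on{der}}$, whereas the Levi you actually need, $L=Z_G(\fc)$, is the centraliser of a Cartan subspace $\fc\subset\fg_1$; these are different Lie algebras with no simple Kac-diagram description. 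The paper instead realises $\theta$ explicitly as $\on{Int}\,n_w$ for a chosen $w\in W_G$, computes $\fc=\ft_1$ in a Chevalley basis, finds $Z_\fg(\fc)$ by evaluating roots on $\fc$, and then pins down the structure constants $c_\alpha$ of $\theta$ on $Z_\fg(\fc)_{\on{der}}$ to determine the induced grading.

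This last point is not cosmetic: the induced grading is precisely where the classification is decided. The gradings $10_a$ and $10_b$ of $E_7$ both yield $Z_\fg(\fc)_{\on{der}}$ of type $A_2$, but the induced gradings (encoded by the structure constants $c_\alpha$) differ, and only $10_b$ affords a bi-orbital cuspidal; $10_a$ does not. Your proposal has no mechanism to make this distinction, nor to rule out $10_a$, which is the only grading in \autoref{table 2} with $r=\dim\fc$ that must be excluded from \autoref{table 1}. Similarly, for the two intermediate cases ($2_a$ of $E_6$, $3_a$ of $E_7$) you correctly identify the goal but do not address the actual work: exhaustively enumerating all $\fc$-strata of the right codimension (four for $E_6$, two for $E_7$) and computing the induced grading on each $Z_\fg(\fc_t)_{\on{der}}$ to show exactly one carries a bi-orbital cuspidal; this again requires the Chevalley-basis analysis, not just Jacobson--Morozov bookkeeping.
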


The proof of~\autoref{thm-excp} is given in~\autoref{sec:exceptional}.

It follows from~\autoref{thm-classical} and~\autoref{thm-excp} (and their proofs) that

\begin{coro}\label{coro:weyl}
 The cuspidal Cartan subspace and the associated Weyl groups are (assuming existence of cuspidal character sheaves)
\begin{enumerate}
\item $\fd\cong\fc$, $W_\fd=W (G_0, \fc)$ when $\dim\Lg_1-\dim\Lg_0=\dim\fc$;
\item $\fd\cong\{0\}$ when $\dim\Lg_1-\dim\Lg_0=0$;
\item when $0<\dim\Lg_1-\dim\Lg_0<\dim\fc$, cuspidal character sheaves occur only in the following cases:
\begin{enumerate}
\item {\bf Type $\bA$} case (b) of~\autoref{thm-classical}: $\dim\fd=1$, $W_\fd\cong G_{m,1,1}$
\item {\bf Type $\BDI$} case (b) of~\autoref{thm-classical}: when $a+b>l$ and $r>0$, $\dim\fd=r$, $W_\fd\cong G_{m,1,r}$

\item {\bf The grading $2_a$ of $E_6$}: $\fd\cong\fc_{t_1,t_2}$  and $W_\fd\cong W(G_2)$ (see~\autoref{cor:E6})
\item {\bf The grading $3_a$ of $E_7$}: $\fd\cong\fc_{w_1}$  and $W_\fd\cong G_5$ (see~\autoref{coro:E7}).
\end{enumerate}
\end{enumerate}
\end{coro}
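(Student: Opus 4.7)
The plan is to read off the cuspidal Cartan subspace $\fd = Z(\fl)_1$ and the complex reflection group $W_\fd = N_{G_0}(\fd)/Z_{G_0}(\fd)$ directly from the characterisation \eqref{eqn-levi} of the special $\theta$-stable Levi subgroup $L$, proceeding case by case according to the trichotomy in \autoref{ssec:strategy}. The general dimension identity $\dim \fd = \dim\fg_1 - \dim\fg_0$ from \autoref{theo:cuspidal-cartan}(2) will also be used as a sanity check throughout.

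Cases (1) and (2) of the corollary are essentially formal. In case (1), where $\dim\fg_1 - \dim\fg_0 = \dim\fc$, the discussion after \eqref{eqn-levi} forces $L$ to be $G_0$-conjugate to $Z_G(\fc)$, so $\fl$ is a $\theta$-stable Cartan subalgebra, $Z(\fl)_1 = \fc$, and $W_\fd = W(G_0, \fc)$ by definition. In case (2), item (2) of \autoref{ssec:strategy} forces $L = G$; since $G$ is almost simple, $\fl = \fg$ has trivial centre and therefore $\fd = 0$.

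For case (3) I would treat the four subcases individually, by extracting $L$ from the relevant part of \autoref{thm-classical} or \autoref{thm-excp} and computing the $\theta$-eigenspaces of $Z(\fl)$. In type $\bA$(b), $L_{\on{der}} \cong \SL_{N/m}^{\times m}$ with $\theta$ permuting the $m$ factors cyclically; the traceless part of $Z(\fl)$ decomposes into lines indexed by the non-trivial characters of $\mu_m$, so $Z(\fl)_1$ is one-dimensional, $N_{G_0}(\fd)/Z_{G_0}(\fd)$ acts by a $\mu_m$-scaling, and therefore $W_\fd \cong G_{m,1,1}$. In type $\BDI$(b) with $a+b > l$ and $r > 0$, the Levi described in \autoref{thm-classical}(3)(b) contains a rank-$r$ general-linear part permuted by $\theta$; the analogous count yields $\dim\fd = r$, and the induced wreath product of the $\mu_m$-scaling with the permutation action on the $r$ factors gives $W_\fd \cong G_{m,1,r}$. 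For the exceptional gradings $2_a$ of $\Evi$ and $3_a$ of $\Evii$, the identifications $\fd \cong \fc_{t_1,t_2}$ with $W_\fd \cong W(\Gii)$ and $\fd \cong \fc_{w_1}$ with $W_\fd \cong G_5$ are established in \autoref{cor:E6} and \autoref{coro:E7}; the starting point in each case is the description of $(G^{\phi})^0$ as $\Gii$ (respectively $\Fiv$) together with the grading specified in parts (2) and (3) of \autoref{thm-excp}, from which the reflection representation of $N_{G_0}(\fd)/Z_{G_0}(\fd)$ is read off the associated affine root system.

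The hard part will be the two exceptional subcases and, to a lesser extent, the $\BDI$(b) subcase: one must explicitly compute the $\theta$-invariant part of $Z(\fl)$ and then identify the induced reflection representation of $N_{G_0}(\fd)/Z_{G_0}(\fd)$ with the specific Shephard--Todd group claimed. The identification with $G_5$ in the $\Evii$ case is the most delicate, since the reflection representation (and not only the abstract group) must be matched against the Shephard--Todd list.
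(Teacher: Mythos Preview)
Your overall strategy matches the paper's: cases (1) and (2) are formal consequences of the trichotomy in \autoref{ssec:strategy}, and the four subcases of (3) are read off from the explicit Levi descriptions in \autoref{thm-classical} and \autoref{thm-excp}. For the classical subcases (a) and (b) your direct computation of $Z(\fl)_1$ and of the wreath-product structure of $W_\fd$ is essentially what the paper leaves implicit.

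The one place where you diverge from the paper is in the exceptional subcases (c) and (d). You propose to extract $W_\fd$ from the structure of $(G^{\phi})^0$ (of type $\Gii$, respectively $\Fiv$) together with its induced grading, ``reading off the reflection representation from the associated affine root system.'' The paper instead works entirely inside the ambient little Weyl group $W(G_0,\fc)$: for $\Evi\,2_a$ this is $W(\Fiv)$, for $\Evii\,3_a$ it is $G_{26}$. Since $\fd\subset\fc$, the group $W_\fd=N_{G_0}(\fd)/Z_{G_0}(\fd)$ is identified with a normaliser quotient inside $W(G_0,\fc)$; the paper pins down the relevant stratum (by noting that $Z_{G_0}(\fd)$ has derived part of type $A_2$, respectively $A_1$) and then looks up the answer in the tables of \cite{ABDR18}. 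Your route via $(G^{\phi})^0$ is suggestive---indeed $\fd$ is a Cartan subspace of $(\fg^{\phi})_1$, and the little Weyl groups of the split $\Gii$ symmetric pair and of the order-$3$ stable $\Fiv$ grading are $W(\Gii)$ and $G_5$---but you would still need to prove that $N_{G_0}(\fd)/Z_{G_0}(\fd)$ coincides with the little Weyl group computed inside $(G^{\phi})_0$, which is not automatic. The paper's route sidesteps this comparison by staying in the ambient complex reflection group.
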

In the above we have used the Shephard-Todd names for complex reflection groups. In particular, $G_{m,1,r}\cong \mathfrak{S}_r\ltimes(\Z/m\Z)^r$. 

\begin{rema}
The notion of a cuspidal Cartan subspace  depends \textit{a priori} on the supporting stratum  of a cuspidal character sheaf,
but the above theorem shows that it (when exists) does not.  
It would be interesting to find an invariant-theoretic definition of a cuspidal Cartan subspace, independently of the existence of cuspidal character sheaves. A candidate for this is that ``$\fd$ is a maximal abelian subspace of $\fg_1$ consisting of semisimple elements such that there exists a $G_0$-distinguished element $x\in \fg_1$ satisfying $[x,\fd] = 0$''. We have checked that this notion coincides with that of a cuspidal Cartan subspace in almost all cases where cuspidal character sheaves exist. 
\end{rema}

\subsection{Bi-orbital cuspidal character sheaves}In this section we describe the bi-orbital cuspidal character sheaves.

We fix an identification $Z(\SL_n) = \mu_n$, $Z(\Sp_{2n}) = \mu_2$, $Z(\Evii) = \mu_2$ and $Z(\Evi) = \mu_3$.

Assume that $(G, \theta)$ is of classical type: $\bA, \bC, \BD, \iiA$. We further divide the case of type $\BD$ into two subcases, according to whether $\varepsilon\in Z(G)$ acts on $\cC$ as $1$ or $-1$. We denote them by $\BD^{\varepsilon=1}$ and $\BD^{\varepsilon=-1}$. For $n\in\mathbb{N}$ we set
\beq\label{eqn-eta}
    \eta(n) = \begin{cases} 2 & \text{if $n > 0$ and $n\equiv 0,3 \pmod{4}$} \\ 1 & \text{otherwise}\,.\end{cases}
\eeq
\begin{theo}\label{thm:bisup-classical}
Let $(G,\theta)$ be of classical type. Then the bi-orbital cuspidal character sheaves on $\fg_1$ arise exactly when the pair $(G, \theta)$ and the supporting orbit $\cO_{\bs}$ are of the form stated in~\autoref{thm-classical} with $r = 0$. Moreover,  
\begin{enumerate}
\item {\bf Type $\bA$.} $m\nmid N$ and $\bd=\bd_{\bs }$. The bi-orbital cuspidal character sheaves are $\IC(\cO_{\bs },\cE_\psi)$, where $\cE_\psi$ is the local system corresponding to a primitive character $\psi:\mu_N\to \Cc$. 
\item {\bf Type $\iiAI$, $\CI$, $\iiAIII$, $\CIII$, $\BDI^{\varepsilon=1}$, $\BDIII^{\varepsilon=1}$.}   There is a unique local system $\cE$ on $\cO_{\bs }$ such that $\IC(\cO_{\bs },\cE)$ is bi-orbital cuspidal.
\item {\bf Type $\BDI^{\varepsilon=-1}$, $\BDIII^{\varepsilon=-1}$.}  There are $\eta(a)\eta(b)$ (resp. $1+\eta(k)$) local systems $\cE$ on $\cO_{\bs }$ such that $\IC(\cO_{\bs },\cE)$ is bi-orbital cuspidal for $\BDI^{\varepsilon=-1}$ (resp. $\BDIII^{\varepsilon=-1}$).

\end{enumerate}
\end{theo}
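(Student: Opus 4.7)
The plan is to specialize the general method of \autoref{ssec:method} to classical types. The first assertion---that bi-orbital cuspidal character sheaves arise exactly for the $(G, \theta, \cO_{\bs})$ listed in the $r = 0$ cases of \autoref{thm-classical}---is immediate from \autoref{ssec:strategy}: any bi-orbital cuspidal sheaf has support on a $G_0$-distinguished nilpotent orbit and forces $\dim \fg_1 = \dim \fg_0$, and \autoref{thm-classical} already enumerates all such data.

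To count local systems in each case, I would fix a candidate $(\cO_{\bs}, \cC)$, choose an $\fsl_2$-triple $(e, h, f)$ with $e \in \cO_{\bs}$, $h \in \fg_0$, $f \in \fg_{-1}$, and form the $\Z$-graded reductive subalgebra $\fl_*$ attached to the associated cocharacter $\varphi$ as in \autoref{ssec:method}. By \cite{Liu24} the bi-orbital cuspidality of $\IC(\cC)$ is equivalent to $(\cO_L \cap \fl_1, \cC|_{\cO_L \cap \fl_1})$ coming from a Lusztig cuspidal pair $(\cO_L, \cC_L)$ on the reductive Lie algebra $\fl$ in the sense of \cite{L84}, and $\IC(\cC)$ is then recovered by spiral induction from $\fp_1$. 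The enumeration thus reduces to reading off cuspidal pairs on the simply connected classical Levi $L$ from \cite{L84, LS85, S85} and checking $\theta$-equivariance of the central character $\cC_L|_{Z(L)}$, followed by a count of $\theta$-equivariant lifts to $\cO_{\bs}$.

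Case by case: in type $\bA$ Lusztig's cuspidal on $\SL_N$ is the pair $(\cO_{\mathrm{reg}}, \cE_\psi)$ parametrised by a primitive character $\psi$ of $\mu_N = Z(\SL_N)$, and matching Jordan types to the multi-segment $[a, N+a-1]$ yields assertion (1). In types $\iiAI, \CI, \iiAIII, \CIII$ and in the $\varepsilon = 1$ subcases of $\BDI$ and $\BDIII$, the cuspidal pair on $L_{\mathrm{der}}$ is unique up to isomorphism (the $\varepsilon = 1$ subcases factoring through $\SO_N$), and the analysis of $\pi_0(Z_{G_0}(x))$ shows that this cuspidal admits a single $\theta$-equivariant lift, giving assertion (2).

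The main obstacle will be the multiplicities $\eta(a)\eta(b)$ (resp.\ $1 + \eta(k)$) in the $\BD^{\varepsilon = -1}$ cases. There the derived Levi $L_{\mathrm{der}}$ splits as a product of two spin-type factors whose ranks are read off from $a, b$ (resp.\ from $k$ plus a trivial factor), on each of which $\varepsilon$ acts by $-1$; the existence and count of such ``spin'' cuspidals is governed by the parity condition encoded in the function $\eta$ of \eqref{eqn-eta}, after \cite{L84, LS85, S85}. Multiplicativity of cuspidals across the two factors produces $\eta(a)\eta(b)$, while the extra factor in $\BDIII^{\varepsilon = -1}$ accounts for the additive $+1$ in $1 + \eta(k)$. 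Tracking these central-character and component-group constraints simultaneously is the technical heart of the argument; once established, the remaining uniqueness statements reduce to bookkeeping against the classifications of \cite{L84, LS85, S85}.
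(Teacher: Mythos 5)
Your overall plan is sound and does match the paper's strategy: apply the method of \autoref{ssec:method}, identify the $\sigma$-pseudo-Levi $L$ attached to the $\fsl_2$-cocharacter, read off Lusztig cuspidal pairs from \cite{L84,LS85}, and count. Your treatment of type $\bA$ and of the $\BDI^{\varepsilon=-1}$ case (product of two $\Spin$-factors giving $\eta(a)\eta(b)$) is essentially what the paper does, although you should note that the paper's $L$ is the isogeny quotient $(\Spin(V')\times\Spin(V''))/(\varepsilon_{V'},\varepsilon_{V''})$, and it is precisely this quotient that forces the $\varepsilon$-actions on the two factors to agree, which is needed to make the multiplicative count of cuspidals with $\varepsilon=-1$ on each factor correct. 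You also omit the bijections
\[
\{\text{clean loc.\ sys.\ on }\cO_L\}\xrightarrow{\text{res}}\{\text{clean loc.\ sys.\ on }\cO_L\cap\fl_1\}\xleftarrow{\text{res}}\{\text{clean loc.\ sys.\ on }\cO\}
\]
from \cite[\S 4]{L95} and \cite[\S 2.9(c)]{LY17a}, which the paper uses to transfer the count from $\cO_L$ back to $\cO$; your appeal to ``$\theta$-equivariant lifts'' and $\pi_0(Z_{G_0}(x))$ is vaguer than what is actually needed.

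The genuine gap is your treatment of $\BDIII^{\varepsilon=-1}$. You assert that the derived Levi ``splits as a product of two spin-type factors whose ranks are read off \ldots\ from $k$ plus a trivial factor,'' and that multiplicativity plus an ``extra factor'' produces the additive $1+\eta(k)$. This is wrong on both counts. In type III we have $2\nmid m_0$, so in the $\varphi$-weight decomposition $V=\bigoplus_i V[i]$ the only self-paired piece is $V[0]$; semisimplicity of $L$ then forces $V=V[0]$, hence $L=\Spin(V)$ is a \emph{single} spin factor and there is no product structure at all. Moreover, even if there were a ``trivial factor,'' multiplicativity would produce $1\cdot\eta(k)=\eta(k)$, never the additive quantity $1+\eta(k)$. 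The correct argument (as in the paper) is that on the single orbit $\cO_L\subset\Spin(V)$, labelled by $(1,5,\ldots,2r-1)$ or $(3,7,\ldots,2r-1)$, Lusztig's classification \cite{L84,LS85} gives exactly $1+\eta(r)$ cuspidal local systems with $\varepsilon=-1$; your proposal does not recover this.
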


In exceptional types $(G,\theta)$ of outer type $\sigma$, we give the table that includes the following data (see~\S\ref{ssec:method})
$$(L,\ \cO_L,\ \#,\ \chi,\ m,\ \on{Kac},\ \text{RLYG label})$$
where 
\begin{itemize}
\item $L$ is a maximal $\sigma$-pseudo-Levi subgroup such that there exists a pair $(\cO_L,\cC_L)$ giving rise to a bi-orbital supercuspidal $\on{IC}(\cC)$ as in~\autoref{ssec:method}
\item $\#$ is the number of such $\cC_L$'s on $\cO_L$
\item $\chi$  indicates the central character in type $\Evi$ and $\Evii$
\item $m=\on{ord}(\theta)$
\item $\on{Kac}$ is the Kac diagram that gives rise to $\theta$
\item In addition, for types $E$, when $\dim\fc>0$, we also indicate the RLYG label of the grading. Note that the only other positive rank grading occurs in type $F_4$ when $m=4$.
\end{itemize}
By~\autoref{lemm:pi'}, these data completely determine the bi-orbital supercuspidal sheaves on simple graded Lie algebras of a fixed outer type. 
Given any distinguished nilpotent $L$-orbit $\cO_L\subseteq \fl$, the weighted Dynkin diagram attaches to $\cO_L$ a weight function $\rho': \Delta_0 \to \{0,2\}$ (where $\Delta_0$ is the set of simple roots); we set $\rho = \frac{1}{2}\rho': \Delta\setminus\{\beta\}\to \{0,1\}$. We will use $\rho$ to label the orbits $\cO_L$.
\begin{theo}\label{thm:bisup-excep}
    Let $(G,\theta)$ be of exceptional  type with outer type $\sigma$. The bi-orbital cuspidal sheaves are the $\Ind^{\fg_1}_{\fp_1}\IC(\cC_L|_{\fl_1\cap \cO_L})$ arising from the data tabulated below:

\begingroup
\newenvironment{cptmatrix}{\begingroup\renewcommand*{\arraystretch}{.7}\setlength\arraycolsep{1pt}\begin{matrix}} {\end{matrix}\endgroup}
\setlength\cellspacetoplimit{2pt}
\setlength\cellspacebottomlimit{1pt}
\pgfkeys{/Dynkin diagram,
edge length=.3cm}

\begin{center}
	\newcommand{\wdynkin}[3]{$\begin{cptmatrix}#1 & #2 & #3\end{cptmatrix}$}
	\begin{tabular}{|Sc|Sc|Sc|Sc|Sc|}
		\multicolumn{5}{c}{Type $\Gii$, $\dynkin[affine mark=*, ordering=Kac, labels={1,2,3}] G[1]2$} \\
		\hline
		$L$ & $\cO_L$ & $\#$& $m\;(k \ge 2)$ & Kac  \\
		\hline
		\multirow{2}{*}{$\Gii$} & \multirow{2}{*}{\wdynkin{\emptyset}{1}{0}} &\multirow{2}*{$1$}& $1$ & \wdynkin{1}{0}{0}  \\
		\cline{4-5}
		& & & $k + 2$ & \wdynkin{k}{1}{0}  \\
		\hline
		\multirow{2}{*}{$\frac{\SL_2\times\SL_2 }{ (\zeta_2,\zeta_2)}$} & \multirow{2}{*}{\wdynkin{1}{\emptyset}{1}} &\multirow{2}*{$1$}& $4$ & \wdynkin{1}{0}{1}  \\
		\cline{4-5}
		& & & $2k + 4$ & \wdynkin{1}{k}{1}  \\
		\hline
		$\SL_3$ & \wdynkin{1}{1}{\emptyset} & $2$& $3k + 3$ & \wdynkin{1}{1}{k}  \\
		\hline
	\end{tabular}
\end{center}

\begin{center}
	\newcommand{\wdynkin}[3]{$\begin{cptmatrix}#1 & #2 & #3\end{cptmatrix}$}
	\begin{tabular}{|Sc|Sc|Sc|Sc|Sc|}
		\multicolumn{5}{c}{Type $\iiiDiv$, $\dynkin[affine mark=*, labels={1,2,1}] D[3]{**}$} \\
		\hline
		$L$ & $\cO_L$ & $\#$& $m/3\;(k \ge 2)$ & Kac  \\
		\hline
		$\Gii$ & \wdynkin{\emptyset}{0}{1} & $1$& $k + 1$ & \wdynkin{k}{0}{1}  \\
		\hline
		$\frac{\SL_2\times\SL_2 }{ (\zeta_2,\zeta_2)}$ & \wdynkin{1}{\emptyset}{1} & $1$& $2k + 2$ & \wdynkin{1}{k}{1}  \\
		\hline
	\end{tabular}
\end{center}

\begin{center}
	\newcommand{\wdynkin}[5]{$\begin{cptmatrix}#1 & #2 & #3 & #4 & #5\end{cptmatrix}$}
	\begin{tabular}{|Sc|Sc|Sc|Sc|Sc|}
		\multicolumn{5}{c}{Type $\Fiv$, $\dynkin[affine mark=*, ordering=Kac, labels={1,2,3,4,2}] F[1]4$} \\
		\hline
		$L$ & $\cO_L$ & $\#$ & $m\;(k \ge 2)$ & Kac \\
		\hline
		\multirow{2}{*}{$\Fiv$} & \multirow{2}{*}{\wdynkin{\emptyset}{0}{1}{0}{0}} &\multirow{2}*{$1$}& $1$ & \wdynkin{1}{0}{0}{0}{0} \\
		\cline{4-5}
		& & & $k + 3$ & \wdynkin{k}{0}{1}{0}{0} \\
		\hline
		\multirow{2}{*}{$\frac{\SL_2 \times \Sp_6 }{ (\zeta_2, \zeta_2)}$} & \multirow{2}*{\wdynkin{1}{\emptyset}{1}{0}{1}} &\multirow{2}*{$1$}& $4$ & \wdynkin{0}{1}{0}{0}{1} \\
		\cline{4-5}
		& & & $2k + 6$ & \wdynkin{1}{k}{1}{0}{1} \\
		\hline
		\multirow{2}{*}{$\frac{\SL_3 \times \SL_3 }{ (\zeta_3, \zeta_3)}$} & \multirow{2}{*}{\wdynkin{1}{1}{\emptyset}{1}{1}} &\multirow{2}*{$2$} & $9$ & \wdynkin{1}{1}{0}{1}{1} \\
		\cline{4-5}
		& & & $3k + 9$ & \wdynkin{1}{1}{k}{1}{1} \\
		\hline
		\rule{0pt}{2.3ex} 
		$\frac{\SL_4 \times \SL_2 }{ (\zeta_2, \zeta_2)}$ & \wdynkin{1}{1}{1}{\emptyset}{1} & $2$& $4k + 8$ & \wdynkin{1}{1}{1}{k}{1}  \\
		\hline
		$\Spin_9$ & \wdynkin{1}{0}{1}{0}{\emptyset} & $1$& $2k + 4$ & \wdynkin{1}{0}{1}{0}{k}  \\
		\hline
	\end{tabular}
\end{center}

\begin{center}
	\newcommand{\wdynkin}[5]{$\begin{cptmatrix}#1 & #2 & #3 & #4 & #5\end{cptmatrix}$}
	\begin{tabular}{|Sc|Sc|Sc|Sc|Sc|}
		\multicolumn{5}{c}{Type $\iiEvi$, \dynkin[affine mark=*, labels={1,2,3,2,1}] E[2]{****}} \\
		\hline
		$L$ & $\cO_L$ & $\#$& $m/2\;(k \ge 2)$ & Kac  \\
		\hline
		$\Fiv$ & \wdynkin{\emptyset}{0}{0}{1}{0} & $1$& $k + 2$ & \wdynkin{k}{0}{0}{1}{0}  \\
		\hline
		$\frac{\SL_3\times\SL_3 }{ (\zeta_3, \zeta_3)}$ & \wdynkin{1}{1}{\emptyset}{1}{1} & $2$& $3k + 6$ & \wdynkin{1}{1}{k}{1}{1}  \\
		\hline
		
	\end{tabular}
\end{center}

\begin{center}
	\newcommand{\wdynkin}[7]{$\begin{cptmatrix} & & #1 & & \\ & & #2 & & \\ #3 & #4 & #5 & #6 & #7 \end{cptmatrix}$}
	\begin{tabular}{|Sc|Sc|Sc|Sc|Sc|Sc|Sc|}
		\multicolumn{7}{c}{Type $\Evi$, $\dynkin[affine mark=*, ordering=Kac, labels={1,1,2,3,2,1,2}] E[1]{******}$, \quad up to an order 3 diagram rotation} \\
		\hline
		$L$ & $\cO_L$  & $\#$ & $\chi$ & $m\;(k \ge 2)$ & Kac& RLYG label\\
		\hline
		\multirow{7}*{$\Evi$} & \multirow{7}*{\wdynkin{\emptyset}{0}{1}{0}{1}{0}{1}} & \multirow{7}*{$2$} & \multirow{7}*{$\neq 1$} & $1$ & \wdynkin{1}{0}{0}{0}{0}{0}{0}&\\
		\cline{5-7}
		& &  & & $4$ & \wdynkin{0}{1}{1}{0}{0}{0}{1}&$4_b$\\
		\cline{5-7}
		& &  &  & $5$ & \wdynkin{0}{0}{1}{0}{1}{0}{1}&$5_a$\\
		\cline{5-7}
		& &  & & $k + 5$ & \wdynkin{k}{0}{1}{0}{1}{0}{1}&\\
		\hline
		\multirow{7}*{$\frac{\SL_2\times \SL_6}{(\zeta_2, \zeta_2)}$} & \multirow{7}*{\wdynkin{1}{\emptyset}{1}{1}{1}{1}{1}} & \multirow{7}*{$2$}  & \multirow{7}*{$1$} & $4$ & \wdynkin{1}{0}{0}{0}{1}{0}{0}&\\
		\cline{5-7}
		&  &  & & $8$ & \wdynkin{0}{1}{1}{1}{0}{1}{1}&$4_a$\\
		\cline{5-7}
		&  & & & $10$ & \wdynkin{1}{0}{1}{1}{1}{1}{1}&$8_b$\\
		\cline{5-7}
		&  & & & $2k + 10$ & \wdynkin{1}{k}{1}{1}{1}{1}{1}&\\
		\hline
		$\frac{\SL_3\times\SL_3\times\SL_3}{(\zeta_3,\zeta_3,\zeta_3)}$ & \wdynkin{1}{1}{1}{1}{\emptyset}{1}{1}  & $2$ & $1$ & $3k + 9$ & \wdynkin{1}{1}{1}{1}{k}{1}{1}&\\
		\hline
	\end{tabular}
\end{center}

\begin{center}
	\newcommand{\wdynkin}[8]{$\begin{cptmatrix} & & & #1 & & & \\ #2 & #3 & #4 & #5 & #6 & #7 & #8 \end{cptmatrix}$}
	\begin{tabular}{|Sc|Sc|Sc|Sc|Sc|Sc|Sc|}
		\multicolumn{7}{c}{Type $\Evii$, $\dynkin[affine mark=*, ordering=Kac, labels={1,2,3,4,3,2,1,2}] E[1]7$, \quad up to the diagram reflection} \\
		\hline
		$L$ & $\cO_L$ & $\#$ & $\chi$ & $m\;(k \ge 2)$ & Kac&RLYG label \\
		\hline
		\multirow{4}*{$\Evii$} & \multirow{4}*{\wdynkin{0}{\emptyset}{0}{0}{1}{0}{0}{1}} & \multirow{4}*{$1$} & \multirow{4}*{$\neq 1$} & $1$ & \wdynkin{0}{1}{0}{0}{0}{0}{0}{0}&\\
		\cline{5-7}
		&  &  & & $5$ & \wdynkin{0}{0}{0}{0}{1}{0}{0}{1}&$5_a$\\
		\cline{5-7}
		 &  &  & & $k + 5$ & \wdynkin{0}{k}{0}{0}{1}{0}{0}{1}&\\
		\hline
		\multirow{4}*{$\frac{\SL_3\times \SL_6}{(\zeta_3, \zeta_3)}$} & \multirow{4}*{\wdynkin{1}{1}{1}{\emptyset}{1}{1}{1}{1}}  & \multirow{4}*{$2$} & \multirow{4}*{$1$} & $9$ & \wdynkin{0}{0}{1}{0}{1}{0}{1}{1}&$9_a$\\
		\cline{5-7}
		& & &  &  $15$ & \wdynkin{1}{1}{1}{0}{1}{1}{1}{1}&\\
		\cline{5-7}
		& &  &  & $3k + 15$ & \wdynkin{1}{1}{1}{k}{1}{1}{1}{1}&\\
		\hline
		$\frac{\SL_4\times\SL_4\times\SL_2}{(\zeta_4,\zeta_4,\zeta_2)}$ & \wdynkin{1}{1}{1}{1}{\emptyset}{1}{1}{1}  & $2$ & $1$ & $4k + 14$ & \wdynkin{1}{1}{1}{1}{k}{1}{1}{1}&\\
		\hline
	\end{tabular}
\end{center}

\begin{center}
	\newcommand{\wdynkin}[9]{$\begin{cptmatrix} & & #1 & & & & & \\ #2 & #3 & #4 & #5 & #6 & #7 & #8 & #9 \end{cptmatrix}$}
	\begin{tabular}{|Sc|Sc|Sc|Sc|Sc|Sc|}
		\multicolumn{5}{c}{Type $\Eviii$, $\dynkin[affine mark=*, ordering=Kac, labels={1,2,3,4,5,6,4,2,3}] E[1]{********}$} \\
		\hline
		$L$ & $\cO_L$ & $\#$ & $m\;(k \ge 2)$ & Kac&RLYG label \\
		\hline
		\multirow{3}*{$\Eviii$} & \multirow{3}*{\wdynkin{0}{0}{0}{0}{1}{0}{0}{0}{\emptyset}} &\multirow{3}*{$1$}& $1$ & \wdynkin{0}{0}{0}{0}{0}{0}{0}{0}{1}& \\
		\cline{4-6}
		& & & $k + 5$ & \wdynkin{0}{0}{0}{0}{1}{0}{0}{0}{k} &\\
		\hline
		\multirow{3}*{$\frac{\Evii\times \SL_2}{(\zeta_2, \zeta_2)}$} & \multirow{3}*{\wdynkin{0}{0}{0}{1}{0}{0}{1}{\emptyset}{1}} &\multirow{3}*{$1$} & $4$ & \wdynkin{1}{0}{0}{0}{0}{0}{0}{0}{1}& $4_b$\\
		\cline{4-6}
		& & & $2k + 10$ & \wdynkin{0}{0}{0}{1}{0}{0}{1}{k}{1} &\\
		\hline
		\multirow{3}*{$\frac{\Evi\times \SL_3}{(\zeta_3, \zeta_3)}$} & \multirow{3}*{\wdynkin{0}{1}{0}{1}{0}{1}{\emptyset}{1}{1}} &\multirow{3}*{$2$}& $9$ & \wdynkin{0}{1}{0}{0}{1}{0}{0}{1}{0}&$9_c$ \\
		\cline{4-6}
		& & & $3k + 15$ & \wdynkin{0}{1}{0}{1}{0}{1}{k}{1}{1} &\\
		\hline
		\multirow{3}*{$\frac{\SL_5\times \SL_5 }{ (\zeta_5, \zeta_5^2)}$} & \multirow{3}*{\wdynkin{1}{1}{1}{1}{\emptyset}{1}{1}{1}{1}} &\multirow{3}*{$4$} & $25$ & \wdynkin{1}{1}{1}{1}{0}{1}{1}{1}{1} &\\
		\cline{4-6}
		& & & $5k + 25$ & \wdynkin{1}{1}{1}{1}{k}{1}{1}{1}{1} &\\
		\hline
		$\frac{\SL_6\times\SL_3\times\SL_2 }{ (\zeta_6,\zeta_3,\zeta_2)}$ & \wdynkin{1}{1}{1}{\emptyset}{1}{1}{1}{1}{1} & $2$ & $6k + 24$ & \wdynkin{1}{1}{1}{k}{1}{1}{1}{1}{1}& \\
		\hline
		\multirow{3}*{$\SO_{16}$} & \multirow{3}*{\wdynkin{0}{\emptyset}{0}{1}{0}{0}{1}{0}{1}} &\multirow{3}*{$1$}& $8$ & \wdynkin{0}{0}{1}{0}{0}{0}{1}{0}{1} &$8_c$\\
		\cline{4-6}
		& & & $2k + 10$ & \wdynkin{0}{k}{0}{1}{0}{0}{1}{0}{1}& \\
		\hline
	\end{tabular}
    
\end{center}

\endgroup
\end{theo}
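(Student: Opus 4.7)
The strategy is that of \S\ref{ssec:method}: any bi-orbital cuspidal sheaf on $\fg_1$ is supercuspidal, hence of the form $\Ind^{\fg_1}_{\fp_1}\IC(\cC_L|_{\fl_1\cap\cO_L})$ for an ungraded cuspidal pair $(\cO_L,\cC_L)$ on a semisimple $\Z$-graded Lie algebra $\fl_*\subset\fg_*$. The ungraded cuspidal pairs on simply connected semisimple groups are exhaustively listed in \cite{L84, LS85, S85}; the $\Z$-grading on $\fl$ is determined by the weighted Dynkin diagram of $\cO_L$, which via \eqref{eqn-rho} and \autoref{lem:dist-orbit} amounts to a weight function $\rho:\Delta_L\to\{0,1\}$, together with a choice of non-negative integer $k$ at the omitted simple-root vertex $\beta\in\Delta\setminus\Delta_L$. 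The classification thus reduces to three tasks: (i) enumerate the $\theta$-stable pseudo-Levi subgroups $L\subseteq G$ that admit an ungraded cuspidal pair; (ii) for each such $L$ and each $\cO_L$, compute the Kac coordinates of the resulting grading of $\fg$ as a function of $k$; (iii) enumerate the cuspidal local systems $\cC_L$ on $\cO_L$ along with their central characters.

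First, using the tables of \cite{L84, LS85, S85}, I enumerate the admissible pairs $(L,\cO_L)$ in each exceptional type $\Gii,\iiiDiv,\Fiv,\iiEvi,\Evi,\Evii,\Eviii$. In each case $L$ arises by deleting a single vertex $\beta$ of the affine Dynkin diagram of $(G,\sigma)$, and the admissible $L$ are those whose simple factors each carry a cuspidal pair in Lusztig's sense. This yields the short list of rows per type matching the first two columns of the tabulated data; the second column records the weight function $\rho$ on $\Delta_L$, drawn as the Kac diagram with an $\emptyset$ at $\beta$.

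Second, for each such $(L,\cO_L)$, extending $\rho$ by the integer $k$ at $\beta$ produces a Kac diagram. The order of the resulting automorphism is recovered from \eqref{eqn-bi} as $m=\ord\sigma\cdot\sum_{\alpha\in\Delta}b_\alpha(m\alpha(x))$, an affine-linear function of $k$ with leading coefficient $b_\beta\cdot\ord\sigma$; this reproduces the $m$-column of the tables. The bi-orbital dimension condition $\dim\fg_1=\dim\fg_0$ is automatic from the supercuspidal property by \S\ref{ssec:strategy}, but can also be checked directly via \cite[Prop.~8.6]{Ka} as in \S\ref{ssec:dim}; the small-$m$ rows correspond to exceptional values of $k$ where additional Kac-diagram symmetries force a smaller $m$. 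Finally, by \cite{L84} the number $\#$ of irreducible cuspidal local systems on $\cO_L$ (and hence of induced $\cC$) equals the number of characters of a certain quotient of $Z(L)$, yielding the counts $1,2,4$ in the tables; for $G$ of types $\Evi,\Evii$ the column $\chi$ records the resulting action of $Z(G)^\theta$ via the composition $Z(G)^\theta\hookrightarrow Z(L)\to\Aut(\cC_L)$.

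The main obstacle will be bookkeeping the diagram symmetries: the extended affine Dynkin diagrams of $\Evi$ and $\Evii$ possess nontrivial automorphisms (a $\Z/3\Z$-rotation for $\Evi$, a $\Z/2\Z$-reflection for $\Evii$) which identify apparent distinct rows, so one must systematically quotient out these automorphisms in the enumeration. This is the reason for the qualifiers ``up to an order $3$ diagram rotation'' and ``up to the diagram reflection'' in the statement. The computation of the central characters $\chi$ is the most delicate step, as it requires tracking how the $Z(L)$-isotypic decomposition of $\cC_L$ interacts with the inclusion $Z(G)^\theta\hookrightarrow Z(L)$ row-by-row. Once this bookkeeping is done, the tables follow from a finite, if tedious, case analysis type-by-type.
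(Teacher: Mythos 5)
Your overall strategy matches the paper's---enumerate pseudo-Levis~$L=L^\beta$ admitting ungraded cuspidal pairs, parametrise the grading by an integer $k$ at the deleted vertex $\beta$, and count local systems via the kernel of $\tilde L\to L^\beta$---but there are two genuine errors in the middle step that would leave the argument incomplete.

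First, the claim that the condition $\dim\fg_1=\dim\fg_0$ ``is automatic from the supercuspidal property'' is false, and this is not a minor slip. Supercuspidality of $\IC(\cC)$ is a \emph{necessary} condition for bi-orbital cuspidality (by \cite{Liu24}), but not sufficient: a supercuspidal pair exists on $\fg_{x,1}$ for every admissible Kac coordinate $k$ once $(\beta,\cO_L,\cC_L)$ is fixed, whereas the dimension equality fails for various small $m$ (this is precisely why, e.g., $m=2,3$ do not appear in the $\Gii$ table for $L=\Gii$, and $m=6,8,12$ do not appear for $L=\SL_2\times\Sp_6/\mu_2$ in $\Fiv$). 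The paper's enumeration is therefore of \emph{supercuspidal pairs subject to} $\dim\fg_1=\dim\fg_0$, and that constraint does the real cutting. What makes this tractable is \autoref{lem:rank0}, which your proposal does not invoke: for $m/\ord\sigma>h$ (twisted Coxeter number), one has $\fg_{x,0}=\fl^\beta_{x,0}$ and $\fg_{x,1}=\fl^\beta_{x,1}$, so the dimension equality holds automatically because $\fl^\beta_*$ is a $1$-rigid grading on a semisimple Lie algebra. Without this finiteness lemma you would have to verify the dimension condition for infinitely many $m$, which is where a direct appeal to \cite[Prop.~8.6]{Ka} alone would never terminate.

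Second, the explanation that ``the small-$m$ rows correspond to exceptional values of $k$ where additional Kac-diagram symmetries force a smaller $m$'' is not what is happening. The small-$m$ rows arise from values of $k$ below the threshold $k\ge 2$, where the point $x=v+\tfrac{1}{2m}\iota$ of the bijection in \autoref{lemm:pi'} may land outside the fundamental alcove (the coordinate $\beta(x)$ can be negative); one must conjugate by $\Waff$ to normalise, which is why the displayed Kac diagrams for the small-$m$ rows do not match the pattern $(\rho(\alpha),k)$. These are hand-checked exceptional cases where the dimension condition holds, not artefacts of diagram symmetry. The ``up to diagram rotation/reflection'' qualifiers in $\Evi$ and $\Evii$ serve a different purpose: they identify genuinely distinct $\beta$'s (hence distinct $L^\beta$) that give $\Waff$-equivalent data.

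Minor points: you should name the divisibility constraint \eqref{eq:div}, since it determines which arithmetic progression of $m$ occurs for a given $(\beta,\cO_L,\cC_L)$ rather than leaving this to ``an affine-linear function of $k$''; and the count $\#$ comes specifically from \autoref{lem:cover}, which computes $\ker(\tilde L\to L^\beta)$ in terms of the dual marks $c_\alpha$, thereby determining which of Lusztig's cuspidal local systems on the simply connected cover descend to $L^\beta$-equivariant sheaves---your phrase ``a certain quotient of $Z(L)$'' needs to be made precise in exactly this way for the argument to produce the numerical column.
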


The proofs of~\autoref{thm:bisup-classical} and~\autoref{thm:bisup-excep} are given in~\S\, \ref{sec-bicus}.

\subsection{Uniqueness of support of cuspidal character sheaves}As a consequence of our classification results, we have the following corollary, which says that the support of cuspidal character sheaves is unique when the grading and the central character are fixed. 
\begin{coro}
\label{theo:support}

\begin{enumerate}
\item If $\dim\Lg_1-\dim\Lg_0=\dim\fc>0$, then all cuspidal character sheaves have full support.
\item
    If $(G, \theta)$ is not of type $\mathbf{BDIII}$, then every cuspidal character sheaf on $\fg_1$ has the same support. 
    \item 
    Assume that $(G, \theta)$ is of type $\mathbf{BDIII}$, so that $G \cong \Spin(N)$.  
    Every cuspidal character sheaf on $\fg_1$ with trivial (resp. non-trivial) $\varepsilon$-action has the same support, closure of a $G_0$-distinguished nilpotent orbit. 
\end{enumerate}
    
\end{coro}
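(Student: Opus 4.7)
The plan is to derive \autoref{theo:support} directly from the explicit classifications in \autoref{thm-classical} and \autoref{thm-excp}, combined with the structural dichotomy of \autoref{ssec:strategy}.

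For part (1), we are in case (2a) of the strategy: when $r = \dim\fc > 0$, condition \eqref{eqn-levi} forces the $\theta$-stable Levi $L$ to be $G_0$-conjugate to $Z_G(\fc)$, since this is the unique $\theta$-stable Levi satisfying $Z(\fl)_1 = \fc$ and $Z(\fl)_0 = 0$. Hence the cuspidal Cartan subspace coincides with $\fc$, the supporting stratum takes the form $\cS = G_0\cdot(\fc^\circ + x_n)$ with $\fc^\circ\subseteq\fc$ open, and $\overline{\cS} \supseteq \overline{G_0\cdot\fc} = \fg_1$. This yields full support.

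For parts (2) and (3) we inspect the classifications case by case. In type $\bA$ each of the two sub-cases of \autoref{thm-classical}(1) produces a single supporting stratum per grading. In types $\iiAI$, $\CI$ every cuspidal falls under part (1), and in types $\iiAIII$, $\CIII$ the support is the closure of the single orbit $\cO_{\bs}$ prescribed by \autoref{thm-classical}(4). For type $\BDI$, \autoref{thm-classical}(3) partitions gradings into case (a), where $\varepsilon$ acts trivially and the support is full, and case (b), where $\varepsilon$ acts nontrivially and a specific stratum $\overline{\cS}$ is prescribed; one verifies that the two families (a) and (b) produce disjoint dimension vectors $\mathbf{d}_{\bs}$, so that for a fixed $(G,\theta)$ at most one case applies and the support is uniquely determined. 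The exceptional cases of \autoref{thm-excp} all prescribe either full support (GIT-stable or top-rank gradings) or the closure of a single distinguished orbit (the gradings $2_a$ of $E_6$ and $3_a$ of $E_7$). Combining these observations yields part (2).

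Finally, type $\BDIII$ is the only family where two multi-segments $\bs_0$ and $\bs_1$ from \autoref{thm-classical}(5) can produce the same dimension vector $\mathbf{d}$ simultaneously; their supports $\overline{\cO_{\bs_0}}$ and $\overline{\cO_{\bs_1}}$ are distinguished by the trivial and nontrivial $\varepsilon$-action respectively, giving part (3). The main obstacle is the disjointness check in type $\BDI$: one must compare the parity patterns of the segments $[l-i,l+i], [-j,j]$ in case (a) against the segments $[-2i-\delta_a, 2i+\delta_a], [l-2i-\delta_b, l+2i+\delta_b]$ in case (b), using the self-duality $\bs = \bs^*$ from \autoref{lem:multi-seg} and the multiplicity constraints on even-length segments in type $\BD$, to confirm that the two families of dimension vectors are indeed disjoint.
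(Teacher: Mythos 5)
Your proof is correct and takes essentially the same route as the paper: part~(1) via the forced Levi $L=Z_G(\fc)$ and density of $G_0\cdot\fc$; part~(2) via case inspection, with the pivotal point being that cases (a) and (b) of~\autoref{thm-classical}(3) yield disjoint dimension vectors; part~(3) directly from~\autoref{thm-classical}(5). The paper's proof is considerably more terse — it simply cites the classification theorems, adds the remark that in type~$\bA$ one has $r=\dim\fc=1$ exactly when $m=N$ (so that case~1(b) falls under part~(1)), and \emph{asserts} the disjointness of cases (a) and (b) in~\autoref{thm-classical}(3) without proof, exactly as you flag it. Two minor gaps worth noting: first, for part~(2) you only mention the exceptional cases with full support or the $2_a/3_a$ gradings, but should also note that the rank-zero exceptional gradings of~\autoref{thm-excp}(3) each prescribe a unique supporting orbit, as read off from the tables in~\autoref{thm:bisup-excep} (one row per Kac diagram); the paper invokes~\autoref{thm:bisup-excep} for precisely this. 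Second, the disjointness check for $\BDI$ is subtler than pure parity bookkeeping: for very small parameters (e.g.\ $a=b=1$ versus $p=q=1$) the multi-segments literally coincide, and the assertion is saved only because the resulting $\fh$ is abelian and hence carries no bi-orbital cuspidal — so neither case contributes. Both the paper and your proof leave this verification implicit, which is acceptable but worth being aware of.
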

\begin{proof}
Note that $r=\dim\fc = 1$ in type $\bA$ when $m=N = \dim V$. Thus part (1) follows from~\autoref{thm-classical} and~\autoref{thm-excp}. Note that in case (3) of~\autoref{thm-classical}, the gradings in case (a) and those in case (b) do not coincide. So part (2) follows from~\autoref{thm-classical},~\autoref{thm-excp} and~\autoref{thm:bisup-excep}. Part (3) is clear from case (5) of~\autoref{thm-classical}. 
\end{proof}

\section{Distinguished elements in classical types}\label{sec:distinguished}

In this section we describe the $G_0$-distinguished elements in $\fg_1$ when $(G, \theta)$ is of classical type.

\begin{prop}\label{prop:distinguished-II/III}
\begin{enumerate}
\item If $(G, \theta)$ is of type $\mathbf{A}$, then there is a $G_0$-distinguished element in $\Lg_1$ if and only if the grading is as in Theorem~\ref{thm-classical} (1).
 
\item If $(G, \theta)$ is of type $\iiAII, \BDII, \CII$, then there are no $G_0$-distinguished elements in $\fg_1$. 
\item If $(G, \theta)$ is of type $\iiAIII, \BDIII, \CIII$, then every $G_0$-distinguished element is nilpotent. 
\end{enumerate}
\end{prop}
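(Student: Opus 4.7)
The plan is to analyze the Jordan decomposition $x = x_s + x_n\in\fg_1$ using the quiver/multi-segment description of Section~\ref{ssec:multiseg} together with the structure of the $\theta$-stable pseudo-Levi $L = Z_G(x_s)^0$.

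I first settle the nilpotent case uniformly. For nilpotent $x\in\cO_\bs$, the reductive quotient of $Z_{G_0}(x)$ equals the automorphism group of the associated graded quiver representation: $\prod_{[a,b]} \GL_{c_{[a,b]}}$ in type $\bA$ (intersected with the $\SL$ determinant condition), with additional $\SO$ or $\Sp$ factors for self-dual segments in types with bilinear form. In type $\bA$, a nilpotent $x$ is $G_0$-distinguished if and only if $\bs$ is a single segment of length $N$, giving $\bd = \bd_{[a,N+a-1]}$, which is case~(a) of Theorem~\ref{thm-classical}(1) when $m\nmid N$ and case~(b) when $m\mid N$. For general $x = x_s + x_n\in\fg_1$ in type $\bA$ with $x_s\ne 0$, I decompose $V$ into $\zeta_m$-orbits of eigenvalues of $x_s$: each nonzero orbit yields $U^{(\alpha)}=\bigoplus_k E_k^{(\alpha)}$ of constant dimension $n_\alpha$, while $V^{(0)}=\ker x_s$ inherits the graded structure. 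On $U^{(\alpha)}$, $x|_{U^{(\alpha)}}$ lies in a type-$\bA$ subalgebra with constant dimension vector, and on $V^{(0)}$ the element $x_n|_{V^{(0)}}$ is distinguished nilpotent, hence a single segment. A direct check of the $\SL$ determinant constraint then shows that the total $\bd$ is either constant (case~(b), when $V^{(0)}=0$) or a single-segment dimension vector of length $N$ (case~(a), when $V^{(0)}\ne 0$), completing part~(1).

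For parts~(2) and~(3), a structural observation simplifies the analysis: in types $\iiAIII$, $\BDIII$, $\CIII$, the arrows of the cyclic quiver belonging to $\fg_1$ form an open directed chain (forward $\Hom$ maps together with one or two extremal bilinear-form components such as $\bigwedge^2$ or $\on{S}^2$) that does not close the cycle. Consequently $x^k=0$ for some $k$ and every $x\in\fg_1$, so every element of $\fg_1$ is nilpotent; in particular every $G_0$-distinguished element is nilpotent, proving part~(3).

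For part~(2), types $\iiAII$, $\BDII$, $\CII$, Lemma~\ref{lem:dim} gives the strict inequality $\dim\fg_0>\dim\fg_1$, forcing $\dim Z_{\fg_0}(x)>0$ for all $x\in\fg_1$. To upgrade this to non-distinguishedness I combine (i) the multi-segment parity constraints of Lemma~\ref{lem:multi-seg}, which force every admissible nilpotent multi-segment to contribute a non-trivial reductive factor ($\SO$, $\Sp$, or $\GL$) to $Z_{G_0}(x)$; and (ii) a pseudo-Levi analysis for the non-nilpotent case (which does occur in type II when the $\fg_1$-quiver admits a closed cycle, e.g.\ $\BDII$ with $l$ even), exhibiting a non-trivial reductive subgroup in $Z_{G_0}(x_s+x_n)$ arising from the bilinear-form structure on $L = Z_G(x_s)^0$. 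The main obstacle will be the mixed-case dimension count in part~(1), where the configurations $V = V^{(0)}\oplus\bigoplus V^{(\alpha)}$ must always produce a dimension vector in case~(a) or (b), and the multi-segment parity analysis in part~(2), which requires type-by-type bookkeeping of $\SO$ versus $\Sp$ factors depending on self-duality and segment-length parity.
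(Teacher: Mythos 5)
Your part~(3) argument is incorrect. In types $\iiAIII$, $\BDIII$, $\CIII$ the element $x\in\fg_1$ acts on \emph{all} of $V$ with $x(V_j)\subseteq V_{j+1}$, and the cyclic quiver \emph{does} close: the components $x_0,\dots,x_l$ (or $x_0,\dots,x_{l-1}$ plus an $\mathrm{S}^2$/$\bigwedge^2$ piece) determine the remaining arrows $V_{-i}\to V_{-i+1}$ via the bilinear-form adjunction, so $x^{m_0}:V_j\to V_j$ is generally nonzero. For example in $\BDIII$ with $m_0=3$ and $\dim V_1=2$, a generic $x$ with nonzero $\omega\in\bigwedge^2 V_1^*$ has $x^3\neq 0$. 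Thus it is false that every $x\in\fg_1$ is nilpotent, and the conclusion does not follow. The paper's actual argument is entirely different and must be reproduced: $x_s^{m_0}$ is a \emph{semisimple} element of $\fg_0$ (here the parity of $m_0$ enters, to keep $x_s^{m_0}$ skew for the form) that centralizes $x$, so $G_0$-distinguishedness forces $x_s^{m_0}=0$ and hence $x_s=0$.

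Part~(2) as written does not contain a proof. You invoke $\dim\fg_0>\dim\fg_1$, which only gives $\dim Z_{\fg_0}(x)>0$ but says nothing about whether that centralizer contains a nonzero semisimple element, and you then gesture at ``multi-segment parity constraints'' and ``a pseudo-Levi analysis'' without supplying either. The paper's argument is structural and simple: one shows that every indecomposable $I$-graded $\C[x]$-summand $M\subseteq V$ is isotropic (this requires a short but genuine computation with the sign $(-1)^{r+s}$ and the $\zeta_m$-twist in $\iiAII$), then pairs $M$ with a complementary summand $M'$ and exhibits the explicit semisimple element $\varphi\in\fg_0$ acting by $+1$ on $M$, $-1$ on $M'$, $0$ on $(M\oplus M')^\perp$. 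Your proposal never locates this semisimple element.

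Part~(1) has a gap as well. You decompose $V$ into $\ker x_s$ plus $\mu_m$-orbit eigenspaces of $x_s$, but then assert that the ``$\SL$ determinant constraint'' rules out mixed configurations. It does not: for a configuration with two distinct nonzero $\mu_m$-orbits, or with $\ker x_s\neq 0$ together with a nonzero orbit, what kills distinguishedness is the explicit rank-$1$ torus in $Z_{G_0}(x)$ scaling the two $\theta$-, $x_s$-, $x_n$-stable pieces by reciprocal weights (as in the paper's lemma that $\Lambda$ is a single $\mu_m$-orbit, and in the subsequent cyclicity argument for $\varphi=x_s^{-1}x_n|_{V_0}$). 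Your proposal flags this as ``the main obstacle'' but does not resolve it, and the tool you propose (determinant) is the wrong one.
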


For any set $S$, let $\Pf(S)$ denote the set of finite subsets of $S$. 
  
If $(G,\theta)$ is of type $\iiAI$, $\CI$ or $\BDI$, we define a map as follows:
\begin{align}\label{multi-I}
			\Pf(\N)^2 &\to \{\text{multi-segments}\} \nonumber\\
		(A, B) &\mapsto \bs_{(A,B)}:= \begin{cases} \displaystyle{\sum_{a\in A} [-a, a] + \sum_{b\in B} [l-b,l+ b]} & \BDI \\ \displaystyle{\sum_{a\in A} [-(a+\tfrac{1}{2}), a+\tfrac{1}{2}] + \sum_{b\in B} [l -(b+\tfrac{1}{2}), l + b+\tfrac{1}{2}]} & \CI \\ \displaystyle{\sum_{a\in A} [-a, a] + \sum_{b\in B} [l-b,l+ 1+ b]} & \iiAI\,. \end{cases}
    \end{align}

For any vector $\bd = (d_i)_{i\in I}\in \Z^I$ satisfying $d_i = d_{-i}$, we define
\begin{equation}\label{eq:F}
	F(\bd) = \begin{cases} \displaystyle\frac{1}{2}\left(d_0 + d_{l}-\sum_{k = 0}^{l - 1}(d_k - d_{k+1})^2\right) & \BDI, \iiAI \\ \displaystyle\frac{1}{2}\left(d_{\frac{1}{2}} + d_{l - \frac{1}{2}}-\sum_{k = 1}^{l - 1}(d_{k - \frac{1}{2}} - d_{k + \frac{1}{2}})^2\right) & \CI.  \end{cases}
\end{equation}

Let $x=x_s+x_n\in\Lg_1$. Let us write $\fh_x=Z_\fg(x_s)_{\on{der}}$, $\fh_{x,i}=\fh_x\cap\fg_i$, and $\fZ_{x,1}=Z(Z_\fg(x_s))_1$.

\begin{prop}\label{prop:distinguished-I}
	Assume that $\fg(\bd)_*$ lies in the families $\BDI, \CI, \iiAI$ and $\fg(\bd)_1$ admits a $G(\bd)_0$-distinguished element. Let $r = \dim\fg_1 - \dim \fg_0$. Then, we have:
\begin{enumerate}
\item
The dimension vector $\bd = \dim_I V$ is of the form $\bd = \bd_{\bs_{A,B} }+ r\mathbf{1}$, where $(A,B)\in \Pf(\N)^2$ satisfies $F(\bd_{\bs_{A,B} }) = 0$ and $\mathbf{1} = (1)_{k\in I}\in \N^I$. 
\item
	If $x \in \fg(\bd)_1$ is a $G(\bd)_0$-distinguished element, then $\dim \fZ_{x,1} \le r$. 
\item 
	There exists a $G_0$-distinguished element $x = x_s + x_n\in \fg(\bd)_1$ such that $\dim \fZ_{x,1} = r$. Moreover,  
    $\dim x_sV=r\mathbf{1}$, $\dim\fh_{x,0}=\dim\fh_{x,1}$
    and $\fh_{x}$ has a unique distinguished nilpotent orbit $\cO_{\bs_{(A,B)}}$. 
\end{enumerate}
\end{prop}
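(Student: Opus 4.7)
My plan is to decompose any $G_0$-distinguished element $x \in \fg(\bd)_1$ via its Jordan decomposition $x = x_s + x_n$ and exploit the fact that centralisers of semisimple elements in types $\iiAI, \CI, \BDI$ split into type-$\bA$ factors plus a residual factor of the original type. Up to $G_0$-conjugation I may place $x_s$ in a fixed Cartan subspace $\fc \subset \fg_1$. Grouping nonzero eigenvalues of $x_s$ into $\pm$-pairs of $\theta$-orbits (which are forced by the bilinear form pairing $V(\alpha)$ with $V(-\alpha)$), and isolating the zero eigenspace $V(0)$, one obtains
\[
    \fh_x \;=\; Z_\fg(x_s)_{\on{der}} \;=\; \prod_{j=1}^{r'} \fa_j \;\times\; \fh_{\on{core}},
\]
where each $\fa_j$ is a cyclically graded type-$\bA$ Lie algebra coming from one $\pm$-pair of nonzero eigenvalue orbits, and $\fh_{\on{core}}$ is of the same family as $\fg_*$ acting on $V(0)$. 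The integer $r'$ equals $\dim \fZ_{x,1}$, since each $\fa_j$ contributes one independent eigenvalue direction to the degree-$1$ part of $Z(Z_\fg(x_s))$ while $\fh_{\on{core}}$ contributes nothing in degree $1$ by type.

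Next, $G_0$-distinguishedness of $x$ forces $x_n$ to be distinguished in $(\fh_x)_1$ under the identity component of $Z_G(x_s)^{\theta}$, hence distinguished on each summand. Distinguished nilpotents on cyclic type-$\bA$ quivers correspond, in the multi-segment language of \autoref{ssec:multiseg}, to a single segment wrapping around the cycle, contributing $\mathbf{1} \in \N^I$ to the dimension vector. Distinguished nilpotents in the residual $\fh_{\on{core}}$, subject to the self-duality and parity constraints of \autoref{lem:multi-seg}, are parameterised precisely by the multi-segments $\bs_{(A,B)}$ from \eqref{multi-I} for some $(A, B) \in \Pf(\N)^2$. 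Combining contributions yields $\bd = \bd_{\bs_{(A,B)}} + r' \mathbf{1}$ together with $\dim(\fh_{\on{core}})_1 = \dim(\fh_{\on{core}})_0$; the latter is exactly $F(\bd_{\bs_{(A,B)}}) = 0$, establishing part (1).

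For part (2), a direct rearrangement of the dimension formulas in \autoref{lem:dim} shows $F(\bd) = \dim \fg_1 - \dim \fg_0 = r$, and the definition in \eqref{eq:F} gives the elementary shift relation $F(\bd + k \mathbf{1}) = F(\bd) + k$. Substituting $\bd = \bd_{\bs_{(A,B)}} + r' \mathbf{1}$ yields $r = F(\bd) = 0 + r' = r'$, whence $\dim \fZ_{x,1} = r' \le r$. For part (3) I construct $x$ by fixing one admissible $(A, B)$, choosing $r$ pairs of generic pairwise non-$\theta$-conjugate nonzero scalars as the eigenvalues of $x_s$ (each pair contributing one dimension to every $V_k$, giving $\dim x_s V = r\mathbf{1}$), and letting $x_n$ be the corresponding distinguished nilpotent on each factor. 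Verification of $\dim \fh_{x,0} = \dim \fh_{x,1}$ and of the uniqueness of the distinguished orbit $\cO_{\bs_{(A,B)}}$ in $\fh_x$ then reduces to tracing the construction.

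The main obstacle is the classification step, namely showing that the distinguished nilpotent multi-segments in the residual $\iiAI/\CI/\BDI$ factor are parameterised precisely by pairs $(A, B) \in \Pf(\N)^2$ satisfying $F(\bd_{\bs_{(A,B)}}) = 0$. This requires a careful combinatorial analysis: among multi-segments satisfying the self-duality and parity constraints of \autoref{lem:multi-seg}, distinguishedness forces each segment to appear with multiplicity at most one (a higher multiplicity would produce a central $\GL$-factor scaling the repeated blocks), and moreover only the symmetric segments $[-a, a]$ and the endpoint-shifted segments $[l - b, l + b]$ (or their $\CI$- and $\iiAI$-analogues as in \eqref{multi-I}) can occur, since any other self-dual segment would force a nontrivial torus in the centraliser. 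Once this classification is in hand, parts (1)--(3) follow from the dimension bookkeeping above.
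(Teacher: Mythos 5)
Your decomposition of $V$ into $V' = x_s V$ and $V'' = \ker x_s$ and the resulting split of $\fh_x$ into $\GL$-type blocks and a residual factor of the original type is the right starting point, and it matches the paper's approach. However, there is a genuine gap: you assert that $G_0$-distinguishedness of $x_n$ on the residual factor $\fh_{\mathrm{core}} = \fg(V'')$ forces $\dim(\fh_{\mathrm{core}})_1 = \dim(\fh_{\mathrm{core}})_0$, i.e., $F(\bd_{\bs_{(A',B')}}) = 0$. This is false. By~\autoref{lemm:dist-orb-classical}(1) the value $F(\bd_{A',B'})$ is nonnegative for any distinguished multi-segment, but it is not automatically zero. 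A concrete counterexample in type $\BDI$: take $\bs = [-l,l]$, which is self-dual, multiplicity-free, and satisfies the parity condition of~\autoref{lem:dist-multi-seg}, hence labels a distinguished nilpotent orbit. Its dimension vector is $d_l = 2$, $d_i = 1$ for $i\neq l$, giving $F(\bd) = \tfrac{1}{2}(1 + 2 - (1-2)^2) = 1 > 0$. So a distinguished nilpotent on the core need not have balanced degrees $0$ and $1$.

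This gap propagates. Because $F(\bd_{A',B'})$ can be positive, the decomposition $\bd = \bd_{\bs_{(A',B')}} + r'\mathbf{1}$ coming from your splitting does not yet have the form required in part (1). The paper's proof addresses exactly this by invoking~\autoref{lemm:dist-orb-classical}(3): it replaces $(A',B')$ by the unique pair $(A,B)$ satisfying $F(\bd_{A,B}) = 0$ and $\bd_{A',B'} = \bd_{A,B} + s\mathbf{1}$ with $s = F(\bd_{A',B'}) \ge 0$, and then $\bd = \bd_{A,B} + (r'+s)\mathbf{1}$ with $r = r' + s$. This combinatorial step is the heart of the argument and is missing from your proposal. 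As a secondary consequence, your part (2) argument derives $r = r'$ and $\dim\fZ_{x,1} = r'$, which would force $\dim\fZ_{x,1} = r$ for \emph{every} $G_0$-distinguished $x$; this is too strong (a distinguished nilpotent $x$ has $\fZ_{x,1} = 0$ while $r$ may be positive). The correct statement is the inequality $\dim\fZ_{x,1}\le\dim\fg'_1/\!/G'_0 = r' \le r$, with equality only for the special elements constructed in part (3). You would need to add the combinatorial lemma and weaken your claim in part (2) to an inequality to repair the argument.
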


\subsection{Type \texorpdfstring{$\bA$}{A} and proof of~\autoref{prop:distinguished-II/III} part (1)}
Assume that $m \ge 2$. 
Suppose that $x = x_s + x_n\in \fg_1$ is distinguished.  Let $\Lambda$ be the set of eigenvalues of $x_s\in \End(V)$ and let $V(\lambda)$ be the $\lambda$-eigenspace of $x_s$ in $V$ for $\lambda\in \Lambda$. 
\begin{lemm}
    The set $\Lambda$ forms a single $\mu_m$-orbit in $\C$.
\end{lemm}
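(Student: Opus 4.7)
The plan is to use the standard characterisation: an element $x\in\fg_1$ is $G_0$-distinguished if and only if $Z_{G_0}(x)^\circ$ contains no non-trivial torus, equivalently, $x$ lies in no proper $\theta$-stable Levi subalgebra of $\fg$. So I would argue by contradiction: from a hypothetical decomposition of $\Lambda$ into two or more $\mu_m$-orbits, I would build an explicit proper $\theta$-stable Levi of $\SL(V)$ whose Lie algebra contains $x$ and whose centre gives a non-trivial $\theta$-fixed torus in $Z_G(x)$.

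The first step is to translate $x_s\in\fg_1$ into the commutation relation $\gamma x_s\gamma^{-1}=\zeta_m x_s$. A direct computation then shows that for $v\in V(\lambda)$ one has $x_s(\gamma v)=\zeta_m^{-1}\lambda\,\gamma v$, i.e.\ $\gamma V(\lambda)=V(\zeta_m^{-1}\lambda)$. Hence $\mu_m$ acts on $\Lambda$ by $\lambda\mapsto\zeta_m^{-1}\lambda$, and the question is whether this action is transitive.

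The second step assumes for contradiction a partition $\Lambda=\Lambda_1\sqcup\cdots\sqcup\Lambda_k$ into $\mu_m$-orbits with $k\ge 2$, and sets $V^{(i)}=\bigoplus_{\lambda\in\Lambda_i}V(\lambda)$. Each $V^{(i)}$ is $\gamma$-stable because $\Lambda_i$ is $\mu_m$-stable, and it is $x$-stable because $V^{(i)}$ is a direct sum of generalised $x_s$-eigenspaces, which are preserved by both Jordan components $x_s$ and $x_n$ of $x$. I then take $L=\on{S}\bigl(\prod_{i=1}^k\GL(V^{(i)})\bigr)\subset\SL(V)$, which is a Levi subgroup of a parabolic compatible with any $\gamma$-stable refinement of the decomposition $V=\bigoplus V^{(i)}$ (for instance the stabiliser of a flag indexed by the $V^{(i)}$).

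The third step verifies the required properties of $L$: since $\gamma$ preserves each $V^{(i)}$, both $L$ and the associated parabolic are $\theta$-stable; since $x$ preserves each $V^{(i)}$, we have $x\in\fl_1$; the centre $Z(L)$ consists of tuples of scalar matrices of total determinant $1$, hence is a torus of dimension $k-1\ge 1$, and $\theta$ acts trivially on it because conjugation by $\gamma$ fixes scalar matrices. Therefore $Z(L)^\theta=Z(L)$ injects into $Z_{G_0}(x)$, contradicting the distinguishedness of $x$. There is no serious obstacle in this plan; the only point that merits explicit checking is that $\gamma$ preserves each $V^{(i)}$ (rather than merely permuting them), which is precisely the content of the first step combined with the choice of $\Lambda_i$ as a full $\mu_m$-orbit.
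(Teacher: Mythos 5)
Your proof is correct and follows essentially the same approach as the paper: decompose $V$ according to $\mu_m$-stable pieces of $\Lambda$, observe that each piece is preserved by $\gamma$ and $x$, and exhibit a positive-dimensional $\theta$-fixed torus in $Z_G(x)$ to contradict distinguishedness. The paper writes down an explicit one-parameter subgroup $\rho$ scaling $V'$ and $V''$ oppositely, while you package the same torus as $Z(L)^\theta$ for the block Levi $L$; these are two formulations of the identical idea.
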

\begin{proof}
    The automorphism $\theta\in \GL(V)$ induces an isomorphism $V(\zeta\lambda) \cong V(\lambda)$ for each $k\in \Z/m$ and $\lambda\in \Lambda$, so $\Lambda$ is $\mu_m$-stable under multiplication. Suppose there is a decomposition $\Lambda = \Lambda' \sqcup \Lambda''$ into non-empty $\mu_m$-stable subsets, then $V' = \bigoplus_{\lambda\in \Lambda'}V(\lambda)$ and $V'' = \bigoplus_{\lambda\in \Lambda''}V(\lambda)$ are preserved by $\theta$, $x_s$ and $x_n$, and $V = V'\oplus V''$. It follows that the cocharacter
    \[
    \rho:\Cc\to \SL(V),\quad \rho(t)|_{V'} = t^{\dim V''}\id_{V'},\quad \rho(t)|_{V''} = t^{-\dim V'}\id_{V''}
    \]
    is non-trivial and its image centralises $\theta$, $x_s$ and $x_n$. It follows that $\rho(\Cc)$ is a non-trivial torus lying in $Z(Z_G(x))_0$, contradicting the distinguishedness of $x$. It follows that $\Lambda$ is a single $\mu_m$ orbit. 
\end{proof}
We have either $\Lambda = \mu_m\lambda_0$ for some $\lambda_0\in \Cc$ or $\Lambda = \{0\}$. In the former case, $x_s$ is an isomorphism and $x_s^m = \lambda_0^m\id_V$. The endomorphism $\varphi = x_s^{-1}x_n|_{V_0}\in \End(V_0)$ is nilpotent. We claim that $\varphi$ is cyclic. Indeed, if $V_0 = V_0'\oplus V_0''$ is a decomposition into $\varphi$-stable non-zero subspaces, then $V_k = V_k'\oplus V_k''$ with $V_k' = x_s^k V_0'$ and $V_k'' = x_s^k V_0''$; put $V' = \bigoplus_{k\in \Z/m}V'_k$ and $V'' = \bigoplus_{k\in \Z/m}V''_k$; then the cocharacter
\[
    \rho:\Cc\to \SL(V),\quad \rho(t)|_{V'} = t^{\dim V''}\id_{V'},\quad \rho(t)|_{V''} = t^{-\dim V'}\id_{V''}
\]
is non-trivial and its image centralises $\theta$, $x_s$ and $x_n$, which contradicts the distinguishedness of $x$. \par
In the latter case, we have $x_s = 0$, so $x = x_n$ is nilpotent. It is easy to show that $x\in \End(V)$ is cyclic, whence regular nilpotent. \par

Therefore, the existence of distinguished element implies that the $\Z/m$-grading on $V$ is in either of the following cases: 
\begin{enumerate}
\item $\dim\fg_1 - \dim\fg_0 = 1$, and $\dim V_k = d$ for every $k\in \Z/m$;
\item $\dim \fg_1 - \dim \fg_0 = 0$, and there exists integers $a < b$ with $b < a + m$ such that $\dim V_k = d + 1$ for $k\in [a, b-1]$ and $\dim V_k = d$ for $k\in [b, a + n - 1]$.
\end{enumerate}
\autoref{prop:distinguished-II/III} (1) follows.

\subsection{Type \texorpdfstring{$\BD$}{BD}, \texorpdfstring{$\bC$}{C} and \texorpdfstring{$\iiA$}{A (unitary)}}
Consider $(G, \theta)$ of type $\BD, \bC$ and $\iiA$.

\begin{lemm}\label{lem:dist-multi-seg}
The distinguished nilpotent $G_0$-orbits in $\fg_1$ are in bijective correspondence with the multi-segments $\bs $ on $I$ satisfying the following conditions:
\begin{enumerate}
\item (self-duality) If $[a,b]\in \bs $, then $[a,b] = [a,b]^*$.
\item (multiplicity-freeness) Every segment $[a,b]\in \bs $ occurs with multiplicity at most $1$. 
\item (parity) Every segment $[a,b]\in \bs $ satisfies 
\[
(-1)^{b - a} = \begin{cases} 1 & \BD \\ -1 & \bC \\ (-1)^{2(a+b)/m} & \iiA\end{cases}
\]
\end{enumerate}
\end{lemm}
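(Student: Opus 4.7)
The plan is to compute the reductive quotient of $Z_{G_0}(x)$ for a nilpotent $x\in \cO_{\bs}$ segment-by-segment, and translate the condition of $x$ being $G_0$-distinguished (i.e., $Z_{G_0}(x)^{\mathrm{red}}$ finite) into the three stated conditions on $\bs$. By \autoref{lem:multi-seg}, nilpotent $G_0$-orbits in $\fg_1$ are already labelled by multi-segments $\bs$ satisfying $\bs=\bs^*$ together with certain parity-based evenness constraints on the multiplicities; in the distinguished case that we will extract, the exceptional splitting of $\cO_{\bs}$ mentioned in \autoref{lem:multi-seg} does not occur because the parity condition (3) excludes it. Decomposing $(V,x)$ as an $I$-graded $\C[x]$-module produces indecomposable summands $M_{[a,b]}^{\oplus c_{[a,b]}}$, with $M_{[a,b]}$ spanned by the basis $\{x^iv_a\mid 0\le i\le b-a\}$ of \autoref{ssec:multiseg}; under the pairing on $V$ the pair $M_{[a,b]}, M_{[a,b]^*}$ is in perfect duality.

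In parallel with the classical Jacobson--Morozov--Springer--Steinberg analysis for ungraded classical groups, the reductive quotient then factorises as
$$
Z_{G_0}(x)^{\mathrm{red}}\;\cong\;\prod_{\{[a,b], [a,b]^*\}:\,[a,b]\neq[a,b]^*}\GL(c_{[a,b]})\;\times\;\prod_{[a,b]=[a,b]^*}\on{Cl}_{[a,b]}(c_{[a,b]}),
$$
where for a self-dual $[a,b]$ the factor $\on{Cl}_{[a,b]}$ equals $\on{O}$ if $M_{[a,b]}$ admits a non-degenerate form of the same type as the ambient form on $V$ (so each copy of $M_{[a,b]}$ is self-paired), and $\Sp$ otherwise (so copies of $M_{[a,b]}$ pair hyperbolically). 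A direct calculation of $(x^iv_a, x^jv_a)$ in the basis above identifies the symmetry sign of the induced form on $M_{[a,b]}$ as $(-1)^{b-a}$ in types $\BD$ and $\bC$, and as $(-1)^{b-a}\cdot(-1)^{2(a+b)/m}$ in type $\iiA$, the second factor arising from the $\zeta_m$-twist in the defining relation $(xv,w)+\zeta_m^k(v,xw)=0$ of $\fg_k$. Matching this sign against the symmetric/alternating type of the ambient form yields exactly the parity condition (3) as the criterion for $\on{Cl}_{[a,b]}=\on{O}$.

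Distinguishedness now translates term-by-term: since $\GL(c)$ and $\Sp(c)$ are infinite as soon as $c\ge 1$, while $\on{O}(c)$ is finite iff $c\le 1$, the three conditions of the lemma correspond respectively to the vanishing of all $\GL$-factors (forcing self-duality of every $[a,b]\in \bs$), to the $\on{O}$-multiplicities being at most $1$ (multiplicity-freeness), and to the parity condition forcing self-dual segments with $c_{[a,b]}=1$ to contribute an $\on{O}$-factor rather than an $\Sp$-factor. Conversely, any $\bs$ meeting (1)--(3) is compatible with the constraints of \autoref{lem:multi-seg} (which are automatic once $c_{[a,b]}\le 1$) and yields $Z_{G_0}(x)^{\mathrm{red}}$ isomorphic to a product of copies of $\on{O}(1)=\{\pm 1\}$, hence finite, so $\cO_{\bs}$ is $G_0$-distinguished.

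The main obstacle is establishing the precise symmetry sign $(-1)^{b-a}(-1)^{2(a+b)/m}$ on $M_{[a,b]}$ in type $\iiA$, where the $\theta$-twist couples with the grading shift $a+b\equiv 0\pmod{m_0}$ characterising self-duality; careful bookkeeping of powers of $\zeta_m$ accrued along the basis $\{x^iv_a\}$ is required, split according to the parity of $2(a+b)/m\in\Z$. Once this sign is pinned down, the rest of the argument proceeds uniformly across types $\BD$, $\bC$ and $\iiA$.
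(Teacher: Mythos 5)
Your route is genuinely different from the paper's. The paper proves distinguishedness is equivalent to $M\cap M^{\perp}=0$ for every $I$-graded $\C[x]$-constituent $M\subseteq V$, which immediately forces $c_{[a,b]}=0$ for non-self-dual segments and $c_{[a,b]}\le 1$ for self-dual ones; the parity condition (3) then falls out for free by combining $c_{[a,b]}\le 1$ with the evenness constraints "$2\mid c_{[a,b]}$ whenever \dots" already established in~\autoref{lem:multi-seg}, with no need to ever compute a form on a multiplicity space. The paper also reproves single-orbitness of $\cO_\bs$ directly (via a determinant-twist in $\on{O}(V)$ lifted to $\Spin(V)$, using that all constituents are odd-dimensional), rather than citing the last sentence of~\autoref{lem:multi-seg}. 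Your approach — an explicit Springer--Steinberg factorisation of $Z_{G_0}(x)^{\mathrm{red}}$ into $\GL$, $\on{O}$, $\Sp$ blocks and reading off finiteness — is a legitimate alternative and would give more information (the actual component group), but it buys this at the cost of having to nail down the graded form-sign computation, which you yourself flag as the unfinished part.

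Two concrete imprecisions in the write-up. First, the claimed symmetry sign $(-1)^{b-a}$ on $M_{[a,b]}$ cannot literally be the same expression in types $\BD$ and $\bC$: already for $\Sp_2$ with the regular nilpotent (segment of length $2$, $b-a=1$) the reductive centraliser is $\on{O}(1)$, so the multiplicity-space form is symmetric, whereas $(-1)^{b-a}=-1$. The sign you want is $\epsilon\cdot(-1)^{b-a}$ with $\epsilon$ the sign of the ambient form, and then ``$\on{Cl}_{[a,b]}=\on{O}$'' becomes ``$\epsilon(-1)^{b-a}=+1$'', which is condition (3); your phrase ``match against the ambient type'' papers over exactly this $\epsilon$. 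Second, the parenthetical ``automatic once $c_{[a,b]}\le 1$'' in the converse direction is off: the evenness constraints of~\autoref{lem:multi-seg} are rendered vacuous because condition (3) excludes the segments they would apply to, not because of multiplicity-freeness. Neither issue is fatal — the final criterion you derive agrees with the statement — but as written the $\bC$ and $\iiA$ sign bookkeeping, which you identify as the crux, is not yet correct, whereas the paper's argument never needs it.
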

\begin{proof}
The proof is completely analogous to the ungraded case in~\cite[Propositions 3.6--3.8]{BC76}.
Let $x\in \fg_1^{\nil}$. Then, $x$ is distinguished if and only if $M\cap M^{\perp} = 0$ for every $I$-graded $\C[x]$-module constituent $M\subseteq V$. We deduce from this that if $\bs  = \sum c_{[a, b]}[a, b]$ is the multi-segment of $x$, then $c_{[a, b]} = 0$ whenever $[a,b]^* \neq [a, b]$ and $c_{[a, b]} \le 1$ whenever $[a, b]^* = [a, b]$. \autoref{lem:multi-seg} implies that $\bs $ satisfies the required conditions. \par
Now, suppose that $x, x'\in \fg_1^{\nil}$ are distinguished nilpotent elements labelled by the same multi-segment. Then, we can find $\tau\in \Aut(V)$ preserving the bilinear form $(, )$ and such that $\tau(V_i) = V_i$ for $i\in I$ and $\tau x = x'\tau$. In the cases of types $\iiA$ and $\bC$, we have $\tau\in G_0$, so $G_0x = G_0x'$. Suppose $(G, \sigma)$ is of type $\BD$. Then, we have $\tau\in \on{O}(V)$. Pick any indecomposable $I$-graded $x$-stable direct factor $M\subseteq V$ and define $\tau'\in \SO(V)$ by $\tau'|_{M} = (\det \tau)\tau|_{M}$ and $\tau'|_{M^{\perp}} = \tau|_{M^{\perp}}$. Since $\dim M$ is odd by the distinguishedness of $x$, we have $\tau'\in \SO(V)$. It follows that any lifing $\tau''\in G = \Spin(V)$ of $\tau'$ lies in $G_0$ and satisfies $\tau''x = x'\tau''$, so $G_0x = G_0x'$. 
\end{proof}

\subsection{Proof of~\autoref{prop:distinguished-II/III} parts (2) and (3)}
Assume first that $(G, \theta)$ is of type II. Let $x\in \fg_1$. We view $V$ as an $I$-graded $\C[x]$-module. 
\begin{lemm}
	The bilinear form on $V$ induces an isomorphism of $I$-graded $\C[x]$-modules:
	\[
		V\xrightarrow{\sim} V^*,\quad V_i \ni v\mapsto \begin{cases} (-1)^i\langle v, \relbar\rangle  & \CII \\ (-1)^{i + \frac{1}{2}}\langle v, \relbar\rangle   & \BDII \\ \zeta_{m_0}^{(i + \frac{1}{2})(l + 1)}\langle v, \relbar\rangle & \iiAII\end{cases}
	\]
\end{lemm}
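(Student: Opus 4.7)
The proof is a direct verification. We must check three things: that the map $\Psi$ defined by $\Psi|_{V_i}(v) = c_i\langle v, \relbar\rangle$ (with $c_i$ the scalar indicated for the given type) is (i) a linear isomorphism, (ii) $I$-grading-preserving for the natural $I$-grading on $V^*$, and (iii) $\C[x]$-equivariant.

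Property (i) is immediate: the form $(,)$ is non-degenerate on $V$ and each $c_i \in \Cc$, so $\Psi$ is bijective. For (ii), equip $V^*$ with the $I$-grading for which $(V^*)_i = V_{-i}^*$ (the annihilator of $\bigoplus_{j\ne -i}V_j$); this is the natural grading coming from the contragredient action of $\gamma$ on $V^*$. The orthogonality $(V_j, V_k) = 0$ for $j+k\ne 0$ in $I$ guarantees that $\langle v, \relbar\rangle$ lies in $V_{-i}^* = (V^*)_i$ for $v\in V_i$, and rescaling by $c_i$ preserves this.

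For (iii), the main step, we use the invariance of the bilinear form: $(xv,w) + (v, xw) = 0$ in types $\CII$ and $\BDII$, and $(xv,w) + \zeta_m(v, xw) = 0$ in type $\iiAII$. Writing $x^*$ for the corresponding $\C[x]$-action on $V^*$ (defined analogously to the proof of~\autoref{lem:multi-seg}), a short computation reduces the equality $\Psi(xv) = x^*\Psi(v)$ to a recursion $c_{i+1}/c_i = \alpha_{\text{type}}$, with $\alpha_\CII = \alpha_\BDII = -1$ and $\alpha_{\iiAII} = -\zeta_m$. The two sign cases are immediate from $(-1)^{i+1}/(-1)^i = -1$ and $(-1)^{(i+1)+1/2}/(-1)^{i+1/2} = -1$. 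For $\iiAII$, writing $m_0 = 2l+1$, we compute
\[
	\frac{c_{i+1}}{c_i} = \zeta_{m_0}^{l+1} = \zeta_{2m_0}^{2(l+1)} = \zeta_{2m_0}^{m_0+1} = -\zeta_{2m_0} = -\zeta_m,
\]
as required.

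The only real obstacle is bookkeeping: pinning down a consistent convention for the $\C[x]$-action on $V^*$ and the $I$-grading on $V^*$ across the three subcases, in view of the parity distinction $2\mid m_0$ versus $2\nmid m_0$ and the $\zeta_m$-twist in the form invariance peculiar to $\iiAII$. Once this is fixed, each of (i), (ii), (iii) is a short computation of the sort displayed above, and the lemma follows.
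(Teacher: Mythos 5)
The paper does not actually supply a proof for this lemma; it is stated and then immediately followed by the next lemma, so there is no argument of record to compare against. Assessed on its own terms, your three-step scheme (linear isomorphism, $I$-grading, $\C[x]$-equivariance via a recursion on the scalars $c_i$) is the right way to organise the check, and your identity $\zeta_{m_0}^{l+1}=\zeta_{2m_0}^{m_0+1}=-\zeta_{2m_0}=-\zeta_m$ is correct.

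The gap is in step (iii). You assert that, with $x^*$ ``defined analogously to the proof of Lemma~3.1'', the equality $\Psi(xv)=x^*\Psi(v)$ reduces to $c_{i+1}/c_i=-\zeta_m$ in type $\iiAII$, and to $-1$ in types $\CII,\BDII$. But the operator $x^*$ of Lemma~3.1, namely $(x^*f)(v)=-\zeta_m f(xv)$ in type $\iiA$, is introduced there precisely so that the \emph{unscaled} map $v\mapsto(v,\relbar)$ already intertwines $x$ with $x^*$; substituting $\Psi(v)=c_i(v,\relbar)$ into $\Psi(xv)=x^*\Psi(v)$ then forces $c_{i+1}=c_i$, not $c_{i+1}/c_i=-\zeta_m$, and likewise the contragredient in types $\CII,\BDII$ forces $c_{i+1}=c_i$, not $-1$. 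So the recursion you state does not come from the operator you cite. In fact the ratio $\alpha_{\text{type}}$ is sensitive to two choices your proof leaves open: whether $\langle v,\relbar\rangle$ means $(v,\relbar)$ or $(\relbar,v)$ (these differ, since the form is not symmetric in type $\iiA$), and whether the $\C[x]$-action on $V^*$ is $(xf)(w)=-f(xw)$, $+f(xw)$, or the twisted $x^*$. For example, $\Psi(v)=c_i(v,\relbar)$ together with $(xf)(w)=f(xw)$ yields $c_{i+1}/c_i=-\zeta_m^{-1}$ in type $\iiAII$, which is not $-\zeta_m$; to land on $-\zeta_m$ one needs $\Psi(v)=c_i(\relbar,v)$ with that same action. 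Your closing paragraph concedes that pinning this down is ``the only real obstacle,'' but that obstacle is the substance of the verification: the required $\alpha_{\iiAII}$ genuinely changes with the convention, so the reduction must be derived explicitly, not asserted.
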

\begin{lemm}
	Every indecomposable direct summand of $V$, viewed as $I$-graded $\C[x]$-module, is isotropic.
\end{lemm}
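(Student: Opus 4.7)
The plan is to show, for each indecomposable graded $\C[x]$-module summand $M\subseteq V$, that the restricted bilinear form $\beta_M=(-,-)|_{M\times M}$ vanishes. Transporting through the $I$-graded $\C[x]$-module isomorphism $\phi:V\xrightarrow{\sim} V^*$ supplied by the preceding lemma, $\beta_M$ corresponds to a graded $\C[x]$-module map $\alpha_M:M\hookrightarrow V\xrightarrow{\phi} V^*\twoheadrightarrow M^*$, so isotropy of $M$ is equivalent to $\alpha_M=0$.

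My first observation is that $M$ and $M^*$ are indecomposable as graded $\C[x]$-modules, so $\alpha_M$ is either zero or an isomorphism; in particular, $\alpha_M\neq 0$ forces $M\cong M^*$, i.e.\ self-duality of the segment $[a,b]$ attached to $M$: $a+b\in m_0\Z$. Non-self-dual summands are thus automatically isotropic. For the self-dual case, after choosing representatives $a,b\in\tilde I$ with $a+b=0$ and a graded generator $v\in M_a$, the only non-trivial pairings lie along $i+j=n$ with $n=b-a$, and $\beta_M=0$ reduces to $(v, x^n v)=0$.

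To establish this, I would combine two structural relations: the $\C[x]$-covariance of the form, $(xu, u')+\epsilon(u, xu')=0$ (with $\epsilon=1$ in $\BD,\bC$ and $\epsilon=\zeta_m$ in $\iiA$), applied $n$ times, together with the form's symmetry (symmetric in $\BD$, skew in $\bC$, twisted $(v,w)=\zeta_{m_0}^{a}(w,v)$ in $\iiA$). Chaining these produces a single scalar identity $(v, x^n v)=c\cdot(v, x^n v)$ with $c=\delta(-\epsilon)^n$, where $\delta$ is the symmetry sign (or twist). The proof is completed by checking case-by-case that $c\neq 1$ in each of $\CII,\BDII,\iiAII$, using that the ambient set $\tilde I$ rigidly determines the parity of $n=-2a$: integer indices in $\CII$ force $n$ even, whereas half-integer indices in $\BDII$ and $\iiAII$ force $n$ odd.

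The main obstacle, more bookkeeping than conceptual, lies in $\iiAII$, where $c=\zeta_{2m_0}^{2a+n(m_0+1)}$ is a root of unity rather than a sign; one verifies that $m_0$ odd together with $a\in\tfrac12+\Z$ makes the exponent odd, ruling out $c=1$. In $\CII$ and $\BDII$ the conclusion $c=-1$ is immediate once the parity of $n$ is noted, since $c=(-1)^{n+1}$ and $c=(-1)^n$ respectively, and these are exactly opposite to the parity required for $c=1$.
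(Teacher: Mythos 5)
Your core computation — chaining the $\C[x]$-covariance of the form with the (twisted) symmetry and then reading off the parity of the exponent — is exactly the engine behind the paper's proof, and the scalar identities you obtain ($c=(-1)^{n}$ in $\BDII$, $c=(-1)^{n+1}$ in $\CII$, $c=\zeta_{2m_0}^{2a+n(m_0+1)}$ in $\iiAII$) are all correct on their own terms. However, the reduction framework you build around this computation has genuine gaps.

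First, the dichotomy "$\alpha_M$ is either zero or an isomorphism" is false for $I$-graded $\C[x]$-modules. The indecomposables here are string representations of a cyclic quiver, not simple objects, and there are plenty of nonzero non-invertible maps between them; already $\End(M_{[a,a+m_0]})$ is two-dimensional, containing the nilpotent $v\mapsto x^{m_0}v$. So "$\alpha_M\neq 0$ forces $M\cong M^*$" is unavailable, and the claim that non-self-dual summands are automatically isotropic does not follow. Second, even for a self-dual segment the normalisation $a+b=0$ is not always achievable: self-duality only gives $a+b\equiv 0\pmod{m_0}$, and shifting by $m_0$ changes $a+b$ by $2m_0$, so $a+b$ can be stuck at $m_0$ rather than $0$. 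Third, the reduction "the only non-trivial pairings lie along $i+j=n$" is only valid for segments of length $\le m_0$; longer segments wrap around $I$ and produce candidate pairings with $i+j\equiv n\pmod{m_0}$ but $i+j\neq n$. Fourth, and most structurally: the statement concerns an arbitrary $x\in\fg_1$, so an indecomposable summand $M$ can have $x$ acting as an \emph{automorphism} (Fitting's lemma gives this dichotomy), in which case $M$ is not labelled by a segment at all and your parametrisation breaks down.

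All four issues dissolve if you drop the detour through $\alpha_M$ and self-duality. The paper instead produces a homogeneous cyclic generator $v\in M_i$ in both the nilpotent and invertible cases (via an $\fsl_2$-triple in the former and descent to $M_0$ as $\C[x^{m_0}]$-module in the latter), and then proves $\langle x^rv, x^sv\rangle = 0$ for \emph{all} $r,s$ by running precisely your covariance–symmetry–parity argument with the exponent $r+s+2i$, constrained only by $m_0\mid(r+s+2i)$. The parity of $2i$ alone (odd in $\BDII,\iiAII$; even in $\CII$), together with that of $m_0$, forces the resulting scalar to be $-1$, with no reference to duality of segments.
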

\begin{proof}
Let $M\subseteq V$ be an indecomposable direct summand. By Fitting's lemma, the action of $x$ on $M$ is either nilpotent or an automorphism. In the former case, we can complete $x$ into an $\fsl_2$-triple $(x, h, f)\in \End(V)^3$ of linear operators such that $h(M_i)\subseteq M_i$ and $f(M_i)\subseteq M_{i-1}$ for every $i\in I$. Then, $M^f = \{v\in M\mid fm = 0\}$ is one dimensional due to the indecomposability of $M$. Thus, any non-zero element $0\neq m\in M^f$ is a homogeneous cyclic generator of $M$ as $\C[x]$-module. In the case where $x$ is an automorphism, the indecomposability of $M$ implies that of $M_0$ as $\C[x^m]$-module. Any cyclic generator of $M_0$ as $\C[x^m]$-module is a cyclic generator of $M$ as $\C[x]$-module. It follows that $M$ admits a homogenous cyclic generator $v\in M_i$ for some $i\in I$ in both cases. To complete the proof, it remains to show that $\langle x^rv, x^sv\rangle = 0$ for $r,s\in \N$. \par

We know that $\langle x^rv, x^sv\rangle = 0$ unless $m_0\mid (r+s + 2i)$. Note that $2i\in \Z/m_0 \Z$ is odd (resp. even) for types $\BDII$ (resp. $\CII$). Suppose that $m_0\mid (r+s + 2i)$. Then, 
	\[
	\langle x^rv, x^sv\rangle  = \begin{cases}(-1)^{r+s}\langle x^sv, x^rv\rangle =  (-1)^{r+s+1}\langle x^rv, x^sv\rangle = -\langle x^rv, x^sv\rangle & \CII \\ (-1)^{r+s}\langle x^sv, x^rv\rangle = (-1)^{r+s}\langle x^rv, x^sv\rangle =  -\langle x^rv, x^sv\rangle & \BDII \\ (-\zeta_{m})^{r - s}\langle x^sv, x^rv\rangle = (-\zeta_{m})^{r - s}\langle x^rv, \gamma x^sv\rangle\\ \quad = (-1)^{r+s}\zeta_{m}^{r + s + 2i}\langle x^rv, x^sv\rangle & \iiAII\end{cases}
	\]
	In the case of $\iiAII$, if $m \mid (r+s + 2i)$, then $\zeta_{m}^{r + s + 2i} = 1$ and $r+s$ is odd; if $m \nmid (r+s + 2i)$, then $\zeta_{m}^{r + s + 2i} = -1$ and $r+s$ is even; therefore, we have $(-1)^{r+s}\zeta_{m}^{r + s + 2i} = -1$ in either case. It follows that $\langle x^rv, x^sv\rangle = 0$ in all cases. This completes the proof.
\end{proof}

\begin{proof}[Proof of~\autoref{prop:distinguished-II/III} (2)]
	Let $x \in \fg_1$. We view $V$ as an $I$-graded $\C[x]$-module. Let $M\subseteq V$ be an indecomposable direct summand. Then, $M$ is isotropic. On the other hand, we have $V\cong V^*$, so we can find a direct summand $M'\subseteq V$ such that the bilinear form on $V$ restricts to a perfect pairing $M\times M'\to \C$. Since $M$ is isotropic, we have $M\cap M' = 0$. Then, the linear operator 
	\[
		\varphi: V\to V,\quad \varphi|_{M} = \id_M,\quad \varphi|_{M'} = -\id_{M'},\quad \varphi|_{(M \oplus M')^{\perp}} = 0
	\]
	is semisimple and lies in $\fg_0$. Moreover, we have $[\varphi,x] = 0$. It follows that $x$ is not distinguished.
\end{proof}

Suppose now that $(G, \theta)$ is of type III.
\begin{proof}[Proof of~\autoref{prop:distinguished-II/III} (3)]
	Suppose that $x = x_s + x_n\in \fg_1$ is $G_0$-distinguished. Then, $x_s^{m_0}\in \End(V)$ lies in $\fg_0$ and commutes with $x_n$. The $G_0$-distinguishedness of $x$ implies that $x_s = 0$. 
\end{proof}

\subsection{Proof of~\autoref{prop:distinguished-I}}
 We first describe the distinguished nilpotent orbits in the families $\mathbf{BDI}, \mathbf{CI}$ and $\li^2{\mathbf{AI}}$. Assume $(G, \theta)$ belongs to one of these families. It follows from~\autoref{lem:dist-multi-seg} that
\begin{lemm} \label{lem:distinguished-orbits} The map in~\eqref{multi-I} induces a bijection
    \begin{align*}
			\Pf(\N)^2 &\to \bigsqcup_{\bd}\{\text{$G(\bd)_0$-distinguished nilpotent orbits in $\fg(\bd)_1$}\},\ 
		(A, B) \mapsto \cO_{\bs_{(A,B)}}\,.
    \end{align*}
\end{lemm}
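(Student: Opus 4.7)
The plan is to combine \autoref{lem:dist-multi-seg} with a direct combinatorial classification of self-dual multi-segments. By that lemma, $G(\bd)_0$-distinguished nilpotent orbits in $\fg(\bd)_1$ are parametrised by multi-segments on $I$ that are self-dual, multiplicity-free, and satisfy the type-dependent parity condition of \autoref{lem:multi-seg}. Letting $\bd$ vary, it therefore suffices to show that $(A, B)\mapsto \bs_{(A,B)}$ is a bijection from $\Pf(\N)^2$ onto the set of all such multi-segments.

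First I would verify that every $\bs_{(A, B)}$ satisfies the three conditions of \autoref{lem:dist-multi-seg}. Self-duality of each constituent is a direct computation using the equivalence $[a,b]\equiv [a+m_0,b+m_0]$: in $\BDI,\iiAI$, we have $[-a,a]^* = [-a,a]$ and $[l-b, l+b]^* = [-l-b,-l+b]\equiv [l-b, l+b]\pmod{m_0}$, with an analogous computation in $\CI$. Multiplicity-freeness is immediate because the maps $a\mapsto [-a,a]$ and $b\mapsto [l-b,l+b]$ (or their half-integer variants) are injective and have disjoint images once $l\ge 1$. For the parity: in $\BDI$ both families have even $b-a$; in $\CI$ both have odd $b-a$; in $\iiAI$ the $[-a, a]$ satisfy $2\mid(b-a)$ and $m\mid (a+b)=0$, while the $[l-b, l+1+b]$ satisfy $2\nmid (b-a)=2b+1$ and $m\nmid (a+b) = 2l+1$; each case matches the condition characterising multiplicity-one segments coming from \autoref{lem:multi-seg}.

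For surjectivity, I would observe that a segment $[a,b]$ on $I$ is self-dual precisely when $a+b\equiv 0 \pmod{m_0}$, and that the equivalence $[a,b]\equiv [a+m_0,b+m_0]$ shifts $a + b$ by $2m_0$. Hence every self-dual segment admits a unique representative with $a + b \in \{0, m_0\}$. The first alternative gives a segment of the form $[-t,t]$ (or $[-t-\tfrac{1}{2}, t+\tfrac{1}{2}]$ in $\CI$); the second gives a segment centred at $l$ or $l+\tfrac{1}{2}$, precisely of the form appearing in the B-family of $\bs_{(A,B)}$. A multiplicity-free, self-dual multi-segment therefore decomposes uniquely as $\bs_{(A,B)}$ for a pair $(A,B)\in \Pf(\N)^2$, and, by the calculation of the previous paragraph, every such $\bs_{(A,B)}$ automatically satisfies the parity condition. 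This yields the required bijection.

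The main obstacle is really just careful bookkeeping of the parity conditions in the three families, which is slightly subtle in $\iiAI$ since the two families of self-dual segments land in different disjuncts of the condition of \autoref{lem:multi-seg}. Once this is in hand, the result reduces to the combinatorial bijection between self-dual multi-segments and pairs of finite subsets of $\N$.
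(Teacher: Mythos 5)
Your proof is correct and takes essentially the same route as the paper: the paper simply asserts that Lemma~\ref{lem:distinguished-orbits} "follows from Lemma~\ref{lem:dist-multi-seg}," and your argument supplies the combinatorial bookkeeping that the paper leaves implicit. Your observation that self-duality forces $a+b\equiv 0\pmod{m_0}$, that shifting by $m_0$ changes $a+b$ by $2m_0$ so every self-dual segment has a unique representative with $a+b\in\{0,m_0\}$, and that the parity condition is then automatic in each of $\BDI$, $\CI$, $\iiAI$, is exactly what is needed to justify the bijection.
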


Define the function
\[
	 \Pf(\N)^2\to \N^{I},\quad (A, B)\mapsto \bd_{A, B} = (d_i)_{i\in I}
\]
by
\[
	\sum_{i\in I} d_it^i = \begin{cases} \displaystyle\sum_{a\in A} t^{-a}\frac{1 - t^{2a+1}}{1 - t} + \sum_{b\in B} t^{l-b}\frac{1 - t^{2b+1}}{1 - t} & \BDI \\ \displaystyle\sum_{a\in A} t^{-a-\frac{1}{2}}\frac{1 - t^{2a+2}}{1 - t} + \sum_{b\in B} t^{l-b-\frac{1}{2}}\frac{1 - t^{2b+2}}{1 - t} & \CI \\ \displaystyle\sum_{a\in A} t^{-a}\frac{1 - t^{2a+1}}{1 - t} + \sum_{b\in B} t^{l-b}\frac{1 - t^{2b+2}}{1 - t}  & \iiAI.\end{cases}
\]
Then, the distinguished nilpotent orbit $\cO_{\bs_{(A,B)}}$ lies in the Lie algebra $\fg(\bd_{A,B})_1$. \par

Recall the function $F$ defined in~\eqref{eq:F}. 
We have $F(\bd_{A,B}) = \dim\fg_1 - \dim\fg_0$. 
In order to study the value $F(\bd_{A,B})$ for pairs $(A, B)\in \Pf(\N)^2$, we introduce a new parametrisation. Consider the following bijection:
    \begin{align*}
        \{0, 1\}\times \N &\to I\times\N \\
				(0,a) & \mapsto \begin{cases}(a+ m_0\Z, \lfloor \frac{a}{l}\rfloor) & \BDI, \iiAI \\ (a+\frac{1}{2}+ m_0\Z, \lfloor \frac{a}{l}\rfloor) &  \CI\end{cases}\\
				(1,b) &\mapsto \begin{cases}(b+l+1+m_0\Z, \lfloor \frac{2b+1}{2l}\rfloor)& \iiAI \\ (b+l+\frac{1}{2}+m_0\Z, \lfloor \frac{b}{l}\rfloor)& \CI \\ (b+l+m_0\Z, \lfloor \frac{b}{l}\rfloor) & \BDI\end{cases}
    \end{align*}
    which induces a bijection $\beta: \Pf(\N)^2\to \Pf(I\times\N) = \Pf(\N)^I$. 
		\begin{lemm}\label{lemm:dist-orb-classical} Let $(A,B)\in \Pf(\N)^2$ and denote $\beta(A,B) = (S_k)_{k\in I}\in \Pf(\N)^{I}$. Then, we have:
			\begin{enumerate}
				\item
					$F(\bd_{A,B})\ge 0$.
				\item
					$F(\bd_{A, B}) = 0$ precisely when the following conditions hold for each $k\in I$:
					\begin{enumerate}
						\item
							$S_k = [n_k] := \{0, 1, \ldots, n_k-1\}$ for some $n_k\in \N$;
						\item
							$S_k\neq \emptyset$ implies $S_{-k-1} = \emptyset$.
					\end{enumerate}
				\item
					There exists a unique $(A_0, B_0)\in \Pf(\N)^2$ such that $F(\bd_{A_0, B_0}) = 0$ and $\bd_{A, B} - \bd_{A_0, B_0} = r \mathbf{1}$, where $r\in \Z$. Moreover, we have $r = F(\bd_{A,B})$. 
    \end{enumerate}
\end{lemm}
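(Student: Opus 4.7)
The plan is to reduce all three parts of the lemma to explicit combinatorial statements about the sets $(S_k)_{k\in I}=\beta(A,B)$ and verify them via a direct computation of $\bd_{A,B}$ from its generating-function presentation. The central algebraic identity, which I will establish first, is
\[
    e_k\;:=\;d_k - d_{k+1}\;=\;\#S_k - \#S_{-k-1},\qquad k\in I\,.
\]
This is because each segment of $\bs_{(A,B)}$ contributes $+1$ to $e_j$ at the residue (mod $m_0$) of its right endpoint and $-1$ at the residue of the integer just below its left endpoint; the bijection $\beta$ is rigged precisely so that these two residues coincide with $k$ and $-k-1$ for an input mapping to $(k, n)$.

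Combining this identity with the closed expression $F(\bd)=\tfrac12(d_0+d_l-\sum_{k=0}^{l-1}e_k^2)$ and a segment-by-segment evaluation of $d_0+d_l$ (in the form $2\sum_k\sum_{s\in S_k}s+\sum_k\#S_k$, with small corrections in $\iiAI$ and $\CI$ coming from the parity of $m_0$ and the possible self-paired residue at $k=l$), I expect the clean presentation
\[
    F(\bd_{A,B}) \;=\; \sum_{k\in I}\Bigl(\sum_{s\in S_k}s - \binom{\#S_k}{2}\Bigr) \;+\; \sum_{\{k,-k-1\}\subset I}\#S_k\cdot \#S_{-k-1},
\]
where the second sum is over unordered pairs with $k\neq -k-1$. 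Each summand is manifestly non-negative, proving (1); moreover the first sum vanishes termwise iff each $S_k$ is an initial segment $[n_k]$ (condition (a)), and the second iff $\#S_k\cdot \#S_{-k-1}=0$ for every pair (condition (b)), proving (2).

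For (3), adding $r\mathbf{1}$ to $\bd$ leaves every $e_k$ unchanged and shifts $F$ by $r$, so $(e_k)_{k\in I}$ is a $\mathbf{1}$-invariant of the class of $\bd$. A datum $(S_k)$ satisfying (a) and (b) is completely recovered from $(e_k)$ via $\#S_k=\max(e_k,0)$, since at most one of $\#S_k$ and $\#S_{-k-1}$ can be positive. Hence the canonical form in each $\mathbf{1}$-class is unique; existence is obtained by setting $n_k=\max(e_k(\bd_{A,B}),0)$ and pulling back via $\beta$, yielding $(A_0,B_0)$ such that $\bd_{A,B}-\bd_{A_0,B_0}$ has vanishing first-order difference, hence equals $r\mathbf{1}$ with $r=F(\bd_{A,B})\ge 0$ by (1).

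The main obstacle is the explicit evaluation of $d_0+d_l$ (and its $\CI$ analogue $d_{1/2}+d_{l-1/2}$) via $\beta$. In $\BDI$ the evaluation is clean because the residues $\{0,l\}\subset\Z/m_0\Z$ pull back to the subgroup $l\Z\subset\Z$, so the count of representatives in $[-a,a]$ reduces to $2n_a+1$. In $\iiAI$ and $\CI$ the corresponding target set is not a subgroup of $\Z$, and I will need a finer case analysis depending on the residue of $a$ (resp.\ $b$) modulo $l$ to express the count via the $(S_k)$-data; the self-paired index $k=l$ of $\iiAI$ will also require a slight modification of the cross sum above, but it does not disturb the sign-definiteness of the formula.
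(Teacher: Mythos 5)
Your proposal is correct and follows essentially the same route as the paper: you establish $d_k - d_{k+1} = \#S_k - \#S_{-k-1}$, express $d_0+d_l$ via $\sum_{n\in S_k}(2n+1)$, and exploit the two inequalities $\sum_{n\in S}(2n+1)\ge(\#S)^2$ (equality iff $S$ is an initial segment) and $(\#S_k)^2+(\#S_{-k-1})^2\ge(\#S_k-\#S_{-k-1})^2$ (equality iff one factor vanishes), merely repackaging them as a single manifestly non-negative identity rather than as a chain. Like the paper, you verify $\BDI$ in detail and defer the bookkeeping for $\iiAI$ and $\CI$; your part (3) argument (via the invariant $(e_k)$ and the recovery $\#S_k=\max(e_k,0)$) is the same as the paper's construction of $(S'_k)$ and $\beta^{-1}$.
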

\begin{proof}
	Let us give the proof for type $\BDI$ and leave the other two cases to the reader. In this case, we have $I = \Z/m\Z$ with $m = 2l$. If we write $\bd_{A, B} = (d_i)_{i\in I}$, then
    \[
    d_0 + d_{l} = \sum_{a\in A}(2\lfloor a/l \rfloor + 1) + \sum_{b\in B}(2\lfloor b/l \rfloor + 1).
    \]
    For each $k\in \Z / m\Z$, set 
    \[
    A_k = \{a\in A\mid a \equiv k \pmod{m} \},\quad B_k = \{b\in B\mid b \equiv k \pmod{m}\}.
    \]
    Then we have for $0 \le k \le l - 1$:
    \[
        d_k - d_{k+1} = \#A_k + \#B_{k+l} - \#A_{-k-1} - \#B_{-k + l - 1}= \# S_k - \# S_{-k-1}.
    \]
    for $0\le k\le l - 1$ and
    \begin{align*}
    d_0 + d_{l} &= \sum_{k\in \Z/m\Z}\sum_{n\in S_k}(2n + 1) \ge \sum_{k\in \Z/m\Z}(\# S_k)^2 \\
    &\ge\sum_{k = 0}^{l - 1}(\# S_k - \# S_{-k-1})^2 = \sum_{k = 0}^{l - 1}(d_k - d_{k+1})^2.
    \end{align*}
		From these inequalities and~\eqref{eq:F}, we conclude that $F(\bd_{A, B}) \ge 0$ and the equality holds precisely under the stated conditions. Finally, for $k\in I$, we set
		\[
			S'_k = \begin{cases} [d_k - d_{k+1}] & \text{if $d_k > d_{k+1}$}; \\ \emptyset & \text{otherwise.}\end{cases}
		\]
		Then, the pair $(A_0, B_0) = \beta^{-1}( (S'_k)_{k\in I})\in \Pf(\N)^2$ satisfies the requirement. 
\end{proof}

\begin{proof}[Proof of~\autoref{prop:distinguished-I}]
	  
    Let $x = x_s + x_n\in \fg(\bd)_1$ be a distinguished element. Since $x_s$ operates semisimply on $V$, it induces an orthogonal decomposition $V = V'\oplus V''$, where $V' = x_s V$ and $V'' = \ker x_s$, and an isomorphism $V'_i\cong V'_{i+1}$ for each $i\in I$. It follows that $\dim_I V' = r'\mathbf{1}$ for some $r'\in \N$. Let $x' = x|_{V'}\in \End(V')$ and $x'' = x|_{V''}\in \End(V'')$ and set $\fg' = \fg(V')$ and  $\fg'' = \fg(V'')$. The inclusion $\fg'\times\fg''\subseteq \fg$ induces $\Z/m\Z$-gradings on $\fg'$ and $\fg''$. Then, we have $x'\in \fg'_1$, $x''\in \fg''_1$ and 
	\[
		Z_{\fg_0}(x) = Z_{\fg'_0}(x')\times Z_{\fg''_0}(x'').
	\]
	The $G_0$-distinguished condition on $x$ implies that $Z_{\fg''_0}(x'')$ has no non-zero semisimple element. 
	Since $x'' = x|_{V''} = x_n|_{V''}$ is nilpotent, it lies in $\cO_{\bs_{(A',B')}}$ for  some pair $(A',B')\in \Pf(\N)^2$. By \autoref{lemm:dist-orb-classical}, there exists a unique pair $(A,B)\in \Pf(\N)^2$ such that $F(\bd_{A,B}) = 0$ and $\bd_{A',B'} = \bd_{A,B} + s\mathbf{1}$ with $s = F(\bd_{A', B'})\in \N$. It follows that $\bd = \bd_{A,B} + r\mathbf{1}$, where $r = r' + s$. This proves the first statement. Now, we have $x_s\in \fg'_1$, so $\fZ_{x}\subseteq \fg'$, and $\dim \fZ_{x,1} \le \dim \fg'_1 /\!/G'_0 = r'\le r$. This proves the second statement.\par
	Assume now that $\dim \fZ_{x,1}= r$. Then $\fZ_{x,1}$ is a Cartan subspace of $\fg'_1$ and $r' = r$; therefore, we have $\fh_{x} = \fg''$ and $(A',B') = (A,B)$. \autoref{lemm:dist-orb-classical} implies that $\fg''_*$ satisfies $\dim\fg''_0=\dim\fg''_1$, and $\cO_{\bs_{(A,B)}}$ is the unique $G_0$-distinguished nilpotent orbit in $\fg''_{1}$. 
\end{proof}

\section{Classification of bi-orbital cuspidal character sheaves: proofs of~\autoref{thm:bisup-classical} and~\autoref{thm:bisup-excep}}\label{sec-bicus}
In this section, we give a complete classification of bi-orbital cuspidal character sheaves on graded simple Lie algebras following the general method described in~\S\ref{ssec:method}, proving~\autoref{thm:bisup-classical} and~\autoref{thm:bisup-excep}. We omit the explicit description of the local systems that give rise to bi-orbital cuspidal character sheaves. This can be reduced easily to the description of cuspidal local systems in the ungraded case \cite{L84,LS85}.

\subsection{Classical types: proof of~\autoref{thm:bisup-classical}}
 
Assume that $(G, \sigma)$ is in one of the families: $\bA, \bC, \BD, \iiA$. We discuss the possibilities for the data $(e, \cO, \cC, L, \fl_*, \cO_0, \cC_0)$, making use of the quiver description of graded Lie algebras of classical types (see~\autoref{sec:quiver}). 

By~\autoref{lem:dim}, we can exclude type II graded Lie algebras. Alternatively,  there are no distinguished elements for type II graded Lie algebras  
by~\autoref{prop:distinguished-II/III} (2), so there are no cuspidal character sheaves on $\fg_1$. 

\subsubsection{Type $\bA$}
Let $G = \SL_n$. 
The representation $e\in \fg_1$ of the $\Z/m\Z$-quiver is cyclic and is labelled by a segment $[a,b]$ such that $b-a+1=n$. The graded dimension of the quiver is then given by
\[
\dim V_j = \#\{k\in [a,b]\mid j \equiv k \pmod{m}\}.
\]
It is easy to show that $\dim \fg_1 - \dim \fg_0 = 1$ when $m\mid n$ and $\dim \fg_1 - \dim \fg_0 = 0$ otherwise. In the latter case, the bi-orbital supercuspidal pairs are $(G_0e,\cE_\psi)$, where $G_0e$ is the only distinguished orbit in $\fg_1$, which is regular in $\Lg$, and $\cE_\psi$ are the clean local systems on $G_0e$  parametrised by the primitive characters of the centre $\psi:Z(G) = \mu_n\to \Cc$.

\subsubsection{Type $\mathbf{I}$}For $n\in\N$, recall that we write $\delta_n=n-1-2\lfloor\frac{n-1}{2}\rfloor$. That is $\delta_n=1$ if $2\mid n$, and  $\delta_n=0$ if $2\nmid n$. Recall also $\eta(n)$ from~\eqref{eqn-eta}.

\begin{prop}\label{prop:biorbital-classical}
Let $\cO\subseteq \fg_1^{\nil}$ be a nilpotent orbit. Then, $\cO$ is the supporting stratum of a bi-orbital cuspidal character sheaf on $\fg_1$ if and only if $\cO=\cO_{\bs_{(A,B)}}$ (see~\eqref{multi-I})  
for some $(A, B)\in \Pf(\N)^2$ of the following form:
\begin{gather*}
(A,B) = \begin{cases}(\{0, 1, \ldots, a-1\}, \{0, 1, \ldots, b-1\})&  \BDI^{\varepsilon=1}, \CI, \iiAI \\ (\{2i+\delta_a,\,0\leq i\leq\lfloor\frac{a-1}{2}\rfloor\},\{2i+\delta_b,\,0\leq i\leq\lfloor\frac{b-1}{2}\rfloor\}) & \BDI^{\varepsilon=-1} \end{cases} 
\end{gather*}
for some integers $a, b\ge 0$ satisfying
\[
    \begin{cases}
    a + b \le l & \BDI^{\varepsilon=1}, \iiAI \\
    a + b < l & \CI \\
    a + b \le l \text{ or }(2  \mid(l-a-b) \text{ and } |a - b|\le l) & \BDI^{\varepsilon=-1}. \\
    \end{cases}
\]
Moreover, in this case, every bi-orbital cuspidal character sheaf on $\fg_1$ has supporting stratum $\cO$ and the number of such sheaves is
\[
    \begin{cases}
    1 &  \BDI^{\varepsilon=1}, \CI, \iiAI \\
    \eta(a)\eta(b) & \BDI^{\varepsilon=-1}. \\
    \end{cases}
\]

\end{prop}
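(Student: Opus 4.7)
The plan is to execute the general reduction of~\autoref{ssec:method}. Let $\IC(\cC)$ be a bi-orbital cuspidal character sheaf on $\fg_1$ supported on a $G_0$-orbit $\cO\subseteq \fg_1^{\nil}$. By the results of~\cite{LTVX}, $\cO$ is $G_0$-distinguished, hence~\autoref{lem:distinguished-orbits} gives $\cO = \cO_{\bs_{(A,B)}}$ for a unique pair $(A,B)\in \Pf(\N)^2$. Picking an $\fsl_2$-triple $(e,h,f)$ with $e\in\cO$, $h\in\fg_0$, $f\in\fg_{-1}$ and the associated cocharacter $\varphi$, I would form the splitting $\Z$-graded Levi $\fl_*\subseteq \fg_*$ as in~\autoref{ssec:method}, and exploit that $(\cO_L,\cC_L)$ is a cuspidal pair on the $\Z$-graded $\fl_*$ in the sense of~\cite{L95}, which by \emph{loc.\ cit.}\ comes from an ungraded cuspidal pair on $L$ classified in~\cite{L84,LS85,S85}.

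The first step is to read off $L$ explicitly from the multi-segment $\bs_{(A,B)}$ via the quiver description of~\autoref{ssec:quivers}: each non-self-dual pair of segments contributes a $\GL$-factor, and the self-dual ``boundary'' segments assemble into one or two classical factors of the same ambient family as $G$. Since a $\GL$-factor admits an ungraded cuspidal local system only when it is $\GL_1$, the classification forces every $\GL$-factor of $L$ to have rank $1$; translating this constraint back through the quiver picture forces the lengths of the segments in $A$ (resp.\ in $B$) to form an initial consecutive run. In the cases $\iiAI$, $\CI$ and $\BDI^{\varepsilon=1}$, this pins down $A = \{0,1,\ldots,a-1\}$ and $B = \{0,1,\ldots,b-1\}$, with the upper bound on $a+b$ coming from~\autoref{lem:multi-seg} (the two staircases must embed into the cyclic index set $I$ without violating the self-duality/parity condition).

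The case $\BDI^{\varepsilon=-1}$ is more subtle: the central character $\varepsilon\in Z(\Spin)$ acts through the double cover $\Spin\to\SO$ and interacts with the two orthogonal factors present in $L$. Here, cuspidal pairs on the $\SO$-factors with nontrivial $\varepsilon$-twist exist for a different family of ``skew staircase'' partitions, producing the shape involving $\delta_a,\delta_b$ and loosening the bound to the stated disjunction. The multiplicity count $\eta(a)\eta(b)$ will come from counting ungraded cuspidal local systems on the two constituent even-orthogonal factors with the prescribed $\varepsilon$-twist, which is $1$ or $2$ depending on the parity of the rank, matching the definition of $\eta$ in~\eqref{eqn-eta}.

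The main obstacle is the careful bookkeeping in the $\BDI^{\varepsilon=-1}$ case: I must match the $Z(\Spin)$-action with the central character twists of Lusztig's cuspidal local systems on each $\SO$-factor, check that every admissible configuration is actually realised, and verify that the resulting spiral-induced sheaf $\Ind^{\fg_1}_{\fp_1}\IC(\cC_L|_{\fl_1\cap \cO_L})$ is simple with nilpotent (hence orbital) support, which by~\cite{Liu24} amounts to the supercuspidality of $(\cO,\cC)$. Existence in all other stated cases should follow by direct construction using the same spiral induction, matching the corresponding bi-orbital cuspidals produced in~\cite{VX23,VX24,X24}.
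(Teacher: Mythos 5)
Your high-level strategy is the one the paper uses — start from the supercuspidal pair, pass to the $\Z$-graded Levi $\fl_*$ via the cocharacter $\varphi$, and invoke Lusztig's ungraded classification — but there are several concrete gaps in how you derive the combinatorics.

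First, the constraint on the $\GL$-factors is misstated. The paper's argument is that the pseudo-Levi $\fl$ arises from a decomposition $V = \bigoplus_{0\le i < m_0} V[i]$ induced by $\varphi$, with $\fgl(V[i])$-factors for $0 < i < m_0/2$ and classical factors on $V[0]$ and $V[m_0/2]$. The requirement from \S\ref{ssec:method} is that $\fl$ be \emph{semisimple}, which forces $V[i] = 0$ for all $i\notin\{0, m_0/2\}$ — i.e., \emph{no} $\GL$-factors whatsoever. Your weaker conclusion ``every $\GL$-factor of $L$ has rank $1$'' does not rule out $\GL_1$-factors, and $\fgl_1$ is not semisimple, so the argument as written permits configurations it should exclude. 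The slip is not cosmetic: your reasoning (``$\GL_n$ carries a cuspidal only for $n=1$'') lands on the wrong conclusion and would need to be replaced by the semisimplicity observation.

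Second, the source of the bound $a+b\le l$ (resp.\ $<l$, resp.\ the disjunction in $\BDI^{\varepsilon=-1}$) is misidentified. You attribute it to \autoref{lem:multi-seg} and an ``embedding'' obstruction inside the cyclic index set $I$, but that lemma is only a parity/self-duality criterion for a multi-segment to label a nilpotent, and $\bs_{(A,B)}$ satisfies it for \emph{every} $(A,B)\in\Pf(\N)^2$ (the segments are self-dual and all odd in $\BDI$). The actual constraint is the identity $\dim\fg_1 = \dim\fg_0$, which for these types is encoded by the function $F$ of \eqref{eq:F}; the inequality $a+b\le l$ (or its variants) is exactly what \autoref{lemm:dist-orb-classical} extracts from $F(\bd_{A,B})=0$. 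Without this, neither the necessity nor the range of $(a,b)$ is correctly pinned down, and in particular the $\BDI^{\varepsilon=-1}$ alternative $a+b>l$ with the extra parity condition would have no justification.

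Third, the staircase (initial-segment) shape of $A$ and $B$ is a direct consequence of \cite[\S12.4]{L84}: once $L$ reduces to a product of two classical factors on $V'=V[0]$ and $V''=V[m_0/2]$, the Jordan blocks of $e$ on each factor must form a triangular partition $(1,3,5,\ldots)$, $(2,4,6,\ldots)$, $(1,5,9,\ldots)$ or $(3,7,11,\ldots)$ depending on the type and the $\varepsilon$-twist. This single citation is what produces both the ``consecutive run'' shape in the $\varepsilon=1$ cases and the skew staircase $\{2i+\delta_a\}$ in $\BDI^{\varepsilon=-1}$, as well as the count $\eta(a)\eta(b)$ of cuspidal local systems. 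Your sketch gestures at Lusztig's classification but never actually invokes the triangular-partition structure, leaving the combinatorial heart of the proposition unjustified. The bijectivity of the two restriction maps (cuspidal local systems on $\cO_L$ $\leftrightarrow$ on $\cO_L\cap\fl_1$ $\leftrightarrow$ on $\cO$), which the paper cites from \cite[\S4]{L95} and \cite[\S2.9(c)]{LY17a}, is also needed for the exact count and is missing.
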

\begin{proof}
We follow the method and the notation in~\autoref{ssec:method}. Assume that $\cO$ admits a supercuspidal local system.
Let $e\in \cO$, $h\in \fg_0$, $f\in \fg_{-1}$ and $\varphi:\Cc\to G_0$ be as in~\autoref{ssec:method}. Then, $\varphi$ induces a decomposition of $V$:
\[
V = \bigoplus_{\substack{i\in \frac{1}{2}\Z \\ 0 \le i < m_0}}V[i],\quad V[i] = \bigoplus_{j\in I} \bigoplus_{\substack{k\in \Z\\ \frac{k}{2}\equiv j-i \pmod{m_0}}} \li^\varphi_k{V}_j.
\]
We have $(V[i], V[j]) = 0$ whenever $i + j \not\equiv 0 \pmod{m_0}$. The $\sigma$-pseudo-Levi subalgebra $\fl$ can be described as
\[
\fl = \prod_{0<i<m_0/2}\fgl(V[i])\times \begin{cases}\fso(V[0])\times \fso(V[m_0/2]) & \BDI\\ 
\fsp(V[0])\times \fsp(V[m_0/2]) &\CI \\
\fso(V[0], \omega_{\varphi})\times\fsp(V[m_0/2], \omega_{\varphi}) &\iiA
\end{cases}, 
\]
where $\omega_{\varphi}:V\times V\to \C$ is a new bilinear form defined by $\omega_{\varphi}(v, w) = (v, \varphi(\zeta_{2m})w)$.

In particular, the condition that $L$ is semisimple implies that $V[i] = 0$ whenever $i\notin \{0,m_0/2\}$. Set $V' = V[0]$ and $V'' = V[m_0/2]$. Then, we have $V = V'\oplus V''$ and
\[
L = \begin{cases}(\Spin(V')\times\Spin(V''))/(\varepsilon_{V'},\varepsilon_{V''}) & \BDI\\ 
\Sp(V')\times\Sp(V'') &\CI \\
\SO(V', \omega_{\varphi})\times\Sp(V'', \omega_{\varphi}) &\iiA
\end{cases}, 
\] 
Let $\cC$ be a supercuspidal local system on $\cO$. Then, $(\cO, \cC)$ induces a cuspidal pair $(\cO_L, \cC_L) = (\cO\cap \fl, \cC|_{\cO\cap \fl})$ on $\fl = \Lie L$. We can write $\cO_L = \cO'\times \cO''$ and $\cC_L = \cC'\boxtimes \cC''$, where $(\cO', \cC')$ (resp.  $(\cO'', \cC'')$) is a cuspidal pair on the $V'$-factor (resp. $V''$-factor) of $\fl$. Note that in the case $\BDI$, the element $(\varepsilon_{V'}, 1) = (1, \varepsilon_{V''})\in Z(L)$ is sent to $\varepsilon_{V} \in Z(G)^{\theta}$ under the inclusion map $L\to G$; therefore, $\varepsilon'$ and $\varepsilon''$ act by the same sign on $\cC'$ and $\cC''$. 
According to~\cite[\S 12.4]{L84}, the element $e\in \cO_L$ operates on $V'$ (resp. $V''$) with Jordan blocks of sizes given by a partition $\lambda'$ (resp. $\lambda''$) of the form 
\begin{gather*}
\lambda' = \begin{cases}(1, 3, \ldots, 2a-1) & \BDI^{\varepsilon=1}, \iiAI\\ 
(2, 4, \ldots, 2a)  &\CI \\
(1, 5,\ldots, 2a-1)  & \BDI^{\varepsilon=-1} \text{ and } 2\nmid a \\
(3, 7,\ldots, 2a-1) & \BDI^{\varepsilon=-1} \text{ and } 2\mid a \\
\end{cases} \\
\lambda'' = \begin{cases}(1, 3, \ldots, 2b-1) & \BDI^{\varepsilon=1}\\ 
(2, 4, \ldots, 2b)  &\CI, \iiAI \\
(1, 5,\ldots, 2b-1) & \BDI^{\varepsilon=-1} \text{ and } 2\nmid b \\
 (3, 7,\ldots, 2b-1) & \BDI^{\varepsilon=-1} \text{ and } 2\mid b  \\
\end{cases},
\end{gather*}
for some integer $a\ge 0$ (resp. $b\ge 0$).  In the cases $\BDI^{\varepsilon=1}, \CI, \iiAI$, there is a unique cuspidal local system on the orbit $\cO_L$. In the case $\BDI^{\varepsilon=-1}$, the number of cuspidal local systems on the orbit $\cO_L$ is $\eta(a)\eta(b)$, where $\eta(n)$ is as in~\eqref{eqn-eta}. Consider the maps
\[
\{\text{clean loc. sys. on $\cO_L$}\}\xrightarrow{\text{res}}\{\text{clean loc. sys. on $\cO_L\cap \fl_1 = \cO\cap \fl_1$}\}\xleftarrow{\text{res}}\{\text{clean loc. sys. on $\cO$}\}.
\]
The first restriction map is a bijection by~\cite[\S 4]{L95}, and the second is a bijection by~\cite[\S 2.9(c)]{LY17a}.  It follows that $\cO=\cO_{\bs_{(A,B)}}$ for some $(A,B)$ as in the proposition and the number of supercuspidal local systems on $\cO$ is as stated. It remains to determine when the condition $\dim \fg_1 = \dim \fg_0$ holds. 

Recall the function $F$ from~\eqref{eq:F}, which satisfies $F(\bd_{A,B}) = \dim \fg_1 - \dim \fg_0$. The condition on $(a,b)$ follows from~\autoref{lemm:dist-orb-classical} (2) and   the condition that $\dim \fg_1 = \dim \fg_0$.  
\end{proof}

\subsubsection{Types $\mathbf{III}$}
Since every distinguished element is nilpotent by \autoref{prop:distinguished-II/III} (3), we deduce that every cuspidal character sheaf is bi-orbital supercuspidal. The $\sigma$-pseudo-Levi subgroup $L$ defined in~\autoref{ssec:method} is of the form
\[
L = \begin{cases}\Spin(V) & \BDIII \\ \Sp(V) & \CIII \\ \SO(V) & \iiAIIIi \\ \Sp(V) & \iiAIIIii \end{cases}
\]
The element $e\in \cO_L$ operates on $V$ with Jordan blocks of sizes given by a partition $\lambda$ of the form 
\[
\lambda = \begin{cases}(1, 3, \ldots, 2r-1) & \BDIII^{\varepsilon=1}, \iiAIIIi\\ 
(2, 4, \ldots, 2r)  &\CI, \iiAIIIii\\
(1, 5, \ldots, 2r - 1) \text{ or } (3, 7, \ldots, 2r-1) & \BDIII^{\varepsilon=-1}
\end{cases}
\]
for some $r\ge 0$. In the cases $\BDIII^{\varepsilon=1}, \CIII, \iiAIII$, there is a unique cuspidal local system on the orbit $\cO_L$. In the cases $\BDIII^{\varepsilon=-1}$, the number of cuspidal local systems on the orbit $\cO_L$ is $1 + \eta(r)$.

\subsection{Exceptional types: proof of~\autoref{thm:bisup-excep} }

Recall notations from~\autoref{ssec:affine} and~\autoref{ssec:nilp}.
 We aim to classify the following set 
\[
\Pi := \{(x, \cO, \cC) \mid x\in \kappa,\; (\cO, \cC)\text{ a supercuspidal pair on $\fg_{x,1}$} \}.
\]

Given any affine simple root $\beta\in \Delta$, let $L^{\beta}$ denote the semisimple subgroup of $G$  generated by $T_0$ and the root subgroups attached to $\pm\Delta \setminus\{\pm\beta\}$; it is called a maximal $\sigma$-psuedo Levi subgroup of $G$. It comes with a Borel subgroup $B^{\beta}$ generated by $T_0$ and the root subgroups attached to $\Delta \setminus\{\beta\}$. Moreover, every point $x\in V$ induces a $\Z$-grading $\fl^{\beta}_{x, *}$ on the Lie algebra $\fl^{\beta}$ of $L^\beta$ such that $\fl_{x, i}\subseteq \fg_{x, \underline{i}}$ for $i\in \Z$, where $\underline{i} = i + m_x\Z\in \Z/m_x\Z$. 

Given any distinguished nilpotent $L^{\beta}$-orbit $\cO_0\subseteq \fl^{\beta}$,  
recall from~\autoref{lem:dist-orbit} the weight function $\rho: \Delta_{L^\beta}=\Delta\setminus\{\beta\}\to \{0,1\}$ associated with $\cO_0$. 
Assume that $\cO_0\cap \fl^{\beta}_{x,1} \neq\emptyset$. Then, the intersection $\cO := \cO_0\cap \fl^{\beta}_{x,1}$ is open and dense in $\fl^{\beta}_{x,1}$ and the grading on $\fl^{\beta}_{x, *}$ is determined by $\rho$: for any root $\gamma$ of $\fl^{\beta}$, the root subspace $\fl^{\beta}_{\gamma}\subseteq \fl^{\beta}_{x,\rho(\gamma)}$,
where for any root $\gamma = \sum_{\alpha\in \Delta_{L^\beta}}n_{\alpha}\alpha$,  $\rho(\gamma):=\sum_{\alpha\in \Delta_{L^\beta}}n_{\alpha} \rho(\alpha).$

\begin{lemm}\label{lemm:pi'}
There is a natural bijection from $\Pi$ to the following set:
\[
\Pi' = \{(\beta, m, \cO_0, \cC_0)\;;\; \beta\in \Delta,\; m\in \Z_{>0},\; (\ord \sigma)| m,\; (\cO_0, \cC_0)\text{ cuspidal pair on $\fl^{\beta}$},\; \text{\eqref{eq:div}}
\}
\]
where
\begin{equation}\label{eq:div}
b_\beta \mid \left(\frac{m}{\ord \sigma} - \sum_{\alpha\in \Delta\setminus \{\beta\}} b_{\alpha}\rho(\alpha)\right)\,\tag{*}
\end{equation}
(see~\eqref{eqn-bi} for the definiton of $b_\alpha$'s) and $\rho:\Delta_{L^\beta}\to\{0,1\}$ is the weight function determined by $\cO_0$ as above.
 
\end{lemm}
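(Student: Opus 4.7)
The plan is to define maps $\Pi \to \Pi'$ and $\Pi' \to \Pi$ directly and verify they are mutual inverses.

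For the forward map, given $(x, \cO, \cC) \in \Pi$, I apply the procedure of~\autoref{ssec:method} to the supercuspidal pair $(\cO, \cC)$ on $\fg_{x, *}$: pick an $\fsl_2$-triple $(e, h, f)$ with $e\in \cO$, $h\in \fg_{x, 0}$, $f\in \fg_{x, -1}$, and a cocharacter $\varphi : \Cc \to G_{x, 0}$ with $d\varphi(1) = h$. By adjusting the triple within the $G_{x, 0}$-conjugacy class, arrange $\im \varphi \subseteq T_0$ and $\langle \alpha, \varphi\rangle \ge 0$ for $\alpha \in \Delta_L$. The splitting $\fl = \bigoplus_n {}^{\varphi}_{2n}\fg_n$ is semisimple (by supercuspidality) with Borel pair $(B_L, T_0)$ cut out by a subset $\Delta_L = \Delta\setminus\{\beta\}\subset \Delta$, so $\fl = \fl^\beta$ for a unique $\beta\in \Delta$. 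The restriction $(\cO_L,\cC_L) = (\cO\cap \fl_1, \cC|_{\cO\cap \fl_1})$ extends uniquely to an ungraded cuspidal pair $(\cO_0, \cC_0)$ on $\fl^\beta$ by~\cite[\S 4]{L95}. Setting $m := m_x$ produces the quadruple $(\beta, m, \cO_0, \cC_0)$.

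To verify that~\eqref{eq:div} holds, observe that the weight function $\rho:\Delta_{L^\beta}\to\{0,1\}$ attached to $\cO_0$ by~\eqref{eqn-rho} records, for each $\alpha\in \Delta\setminus\{\beta\}$, the $\Z$-degree of $\fg_\alpha$ in $\fl_*$. Compatibility of the $\Z$-grading on $\fl$ with the ambient $\Z/m\Z$-grading $\fg_{x,*}$, combined with the fact that $m\alpha(x)\in\N$ is the Kac coordinate, yields $m\alpha(x) = \rho(\alpha)$ for $\alpha\in \Delta\setminus\{\beta\}$. Evaluating the identity $\sum_{\alpha\in\Delta}b_\alpha\alpha = 1_V/\ord\sigma$ at $x$ and isolating the $\beta$-term gives
\[
b_\beta\cdot m\beta(x) \;=\; \frac{m}{\ord\sigma} \;-\; \sum_{\alpha\in\Delta\setminus\{\beta\}} b_\alpha \,\rho(\alpha),
\]
whence $b_\beta$ divides the right-hand side.

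For the backward map, given $(\beta, m, \cO_0, \cC_0)\in \Pi'$, extract $\rho$ from $\cO_0$ and define $x\in V$ by the Kac coordinates $m\alpha(x) = \rho(\alpha)$ for $\alpha\neq \beta$ and $m\beta(x) = b_\beta^{-1}\bigl(m/\ord\sigma - \sum_{\alpha\neq\beta}b_\alpha\rho(\alpha)\bigr)$, a non-negative integer by~\eqref{eq:div} together with the lower bound on $m$ implicit in $\Pi'$. Then $x\in \kappa$, and the $\Z/m\Z$-grading $\fg_{x,*}$ restricts on $\fl^\beta$ to precisely the $\Z$-grading determined by $\rho$. Consequently $\cO_0\cap \fl^\beta_1$ is the unique open $L^\beta_{x,0}$-orbit in $\fl^\beta_1$ and carries the restricted cuspidal local system. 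The spiral induction of~\autoref{ssec:method} then produces a supercuspidal pair $(\cO, \cC)$ on $\fg_{x, 1}$, giving an element of $\Pi$.

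Finally, the two constructions are mutual inverses: the pseudo-Levi $L^\beta$ and the weight function $\rho$ are intrinsically attached to the cuspidal pair on $\fg_{x,*}$ by the essential uniqueness of the Jacobson--Morozov data, while $m$ equals $m_x$ by construction; the restriction-extension bijection between ungraded cuspidal pairs on $\fl^\beta$ and their $\Z$-graded counterparts (\cite[\S 4]{L95}, \cite[\S 2.9(c)]{LY17a}) matches $(\cO_0, \cC_0)$ with $(\cO_L, \cC_L)$. The main technical point I expect is the sharp equality $m\alpha(x) = \rho(\alpha)$, rather than merely a congruence modulo $m$, for $\alpha\in \Delta\setminus\{\beta\}$: this pins the Kac coordinates down to lie in $\{0,1\}$ on the non-$\beta$ nodes, and should be deduced from the alcove constraints on $x$ combined with $\rho(\alpha)\in\{0,1\}$, requiring some case analysis at the affine node.
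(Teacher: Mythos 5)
Your overall strategy — build the forward and backward maps directly by the spiral / Jacobson--Morozov recipe and verify that they are mutually inverse — is the same one the paper uses, but the paper's proof inserts an intermediate object that your argument is missing, and this is exactly where the gap you flag at the end becomes a real problem rather than a ``technical point to be deduced.''

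The issue is the identity $m\alpha(x) = \rho(\alpha)$ for $\alpha\in\Delta\setminus\{\beta\}$, equivalently the identification $\Delta_L\subset\Delta$. Starting from $(x,\cO,\cC)\in\Pi$, the cocharacter $\iota$ with $d\iota(1)=h$ determines a vertex $v := x - \tfrac{1}{2m_x}\iota$ of the hyperplane arrangement on $V$, and $\fl$ is the pseudo-Levi attached to $v$; but there is no a priori reason for $v$ to lie in $\kappa$. Unless $v\in\kappa$, the basis of $\Phi(L,T_0)$ determined by $B\cap L$ need not be of the form $\Delta\setminus\{\beta\}$, and then neither your equality $m\alpha(x)=\rho(\alpha)$ nor the divisibility deduction that you base on it is justified. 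Conjugating the $\fsl_2$-triple within $G_{x,0}$ while keeping $h\in\ft_0$ only gives you the action of the stabilizer of $x$ in $\Waff$, which need not suffice to move $v$ into $\kappa$. A parallel problem arises in the backward direction: you assert that $m\beta(x)$ is a non-negative integer ``by the lower bound on $m$ implicit in $\Pi'$,'' but the definition of $\Pi'$ contains no lower bound on $m$ — only $(\operatorname{ord}\sigma)\mid m$ and~\eqref{eq:div} — so the constructed $x$ need not lie in $\kappa$ and your map need not land in $\Pi$.

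The paper resolves both directions with one device that your proof lacks: it introduces the intermediate set $\Pi_1$ of triples $(x,\cO,\cC)$ with $x$ ranging over all of $V$, equips it with a $\Waff$-action via lifts $\dot w\in N_{G^\sigma}(T_0)$, proves $\Pi\cong\Pi_1/\Waff$, and then constructs the bijection $\Pi_1/\Waff\cong\Pi'$. Working in $\Pi_1$ one is free to replace $(x,\cO,\cC)$ by $(w^{-1}x,\dot w^{-1}\cO,\dot w^{-1}_*\cC)$ so as to put $v$ in $\kappa$, after which $\Delta_L=\Delta\setminus\{\beta\}$ and the Kac-coordinate computation become legitimate; and in the backward direction, the constructed $x$ is merely required to give an element of $\Pi_1$, which is then normalized by $\Waff$. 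Your remark that the sharp equality ``should be deduced from the alcove constraints on $x$ ... requiring some case analysis at the affine node'' identifies the right pressure point but is not a substitute for this normalization step; without it, both your forward and backward maps are incompletely defined.
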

\begin{proof}
The affine roots $\Phi$ define a hyperplane arrangement on $V\otimes_\Q\R$. Let $\Sigma\subseteq V$ be the set of vertices. 
We introduce an intermediate set:
\[
\Pi_1 := \{(x, \cO, \cC)\;;\; x\in V, \; (\cO, \cC)\text{ supercuspidal pair on $\fg_{x,1}$} \} 
\]
and define a $\Waff$-action as follows. Since the underlying finite Weyl group of $\Waff$ can be identified with the Weyl group $W(G^{\sigma}, T_0)$, for each $w\in \Waff$, we can choose a lifting $\dot w\in N_{G^\sigma}(T_0)$ of the image of $w$ in $W(G^{\sigma}, T_0)$. Then, $\Waff$ acts on $\Pi_1$ by $w(x, \cO, \cC) = (wx, \dot w\cO, \dot w_*\cC)$. The map
\[
\Pi\to \Pi_1,\quad  (x, \cO, \cC) \mapsto (x, \cO, \cC)
\]  
induces a bijection $\Pi\xrightarrow{\sim}\Pi_1/\Waff$. \par

We now construct a bijection $\Pi_1/\Waff\xrightarrow{\sim} \Pi'$. Given $(x, \cO, \cC)\in \Pi_1$, the point $x\in V$ determines a $\Z/ m_x\Z$-grading $\fg_{x, *}$ as explained in~\autoref{ssec:affine}. Choose an $\fsl_2$-triple $(e, h, f)$ with $e\in \cO$, $h\in \ft_0$ and $f\in \fg_{x,-1}$. It determines a cocharacter $\iota:\Cc\to T_{0}$ such that $d\iota(1) = h$. The point 
\[
v:= x -\tfrac{1}{2m_x}\iota\in V
\]
lies in $\Sigma$. Choose $w\in \Waff$ such that $v\in w\kappa$. Then, there is a unique simple affine root $\beta\in \Delta$ characterised by $\beta(w^{-1}v) > 0$. Note that $\beta$ is independent of the choice of $w$. Up to replacing $x$ with $w^{-1}x$, we may assume $v\in \kappa$. \par

Let $\fl=\on{Lie}(L^\beta)$. The point $x\in V$ yields a $\Z$-grading $\fl_{x, *}$. The restriction $(\cO\cap \fl_{x, 1}, \cC|_{\cO\cap \fl_{x, 1}})$ is a cuspidal pair on $\fl_{x, 1}$, so there exists a unique  cuspidal pair $(\cO', \cC')$ on $\fl_x$ such that $\cO'\cap \fl_{x, 1} = \cO\cap \fl_{x, 1}$ and $\cC'|_{\cO'\cap \fl_{x, 1}} = \cC|_{\cO\cap \fl_{x, 1}}$. Set $(\cO_0, \cC_0) = (\dot w \cO', \dot w_* \cC')$. Then, the map $\Pi_1\to \Pi'$ sending $(x, \cO, \cC)$ to $(\beta, m_x, \cO_0, \cC_0)$ factors through the quotient $\Pi_1\to \Pi_1/\Waff$. This yields a map $a:\Pi_1/\Waff\to \Pi'$. It remains to show that $a$ is a bijection. \par

To construct the inverse map, let $(\beta, m, \cO_0, \cC_0)\in \Pi'$. Let $v\in \kappa$ be the unique vertex such that $\alpha(v) = 0$ for $\alpha\in \Delta\setminus\{\beta\}$. As in~\autoref{ssec:nilp}, we may find an $\fsl_2$-triple $(e, h, f)$ in $\fl^{\beta}$ with $e\in \cO_0$ such that the image of the associated cocharacter $\iota:\Cc\to L^{\beta}$ lies in $T_0$ and $\langle \alpha, \iota\rangle \ge 0$ for $\alpha\in \Delta\setminus\{\beta\}$. The weight function given by
\[
\rho:\Delta\setminus\{\beta\}\to \{0,1\},\quad \rho(\alpha) = \langle \alpha, \iota\rangle/2.
\]
and there exists $\alpha$ such that $\rho(\alpha) = 1$.  
Set 
\[
x = v + \tfrac{1}{2m}\iota, 
\]
which gives rise to a $\Z$-grading $\fl^{\beta}_{x, *}$ satisfying $\fl^{\beta}_{\alpha}\subseteq \fl^{\beta}_{x,\rho(\alpha)}$ for $\alpha\in \Delta\setminus\{\beta\}$. We have $m = m_x$: indeed, $m\alpha(x) = \rho(\alpha)\in \{0,1\}$ for $\alpha\in \Delta\setminus\{\beta\}$ and $m\alpha(x)=1$ for some $\alpha$, and, in view of~\autoref{eqn-bi}, the condition~\eqref{eq:div} implies that $m\beta(x) \in \Z$. It follows that $\fl^{\beta}_{x, 1}\subseteq \fg_{x, 1}$. The restriction $(\cO_0\cap \fl^{\beta}_{x, 1}, \cC|_{\cO_0\cap \fl^{\beta}_{x, 1}})$ is a cuspidal pair on $\fl^{\beta}_{x, 1}$ and thus extends to a supercuspidal pair $(\cO, \cC)$ on $\fg_{x, 1}$. Then, we have a map $\Pi'\to \Pi_1$ sending $(\beta, m, \cO_0, \cC_0)$ to $(x, \cO, \cC)$ and it induces a map $b:\Pi'\to \Pi_1/\Waff$. It is not hard to check that $a$ and $b$ are inverse to each other.

\end{proof}

\begin{lemm}\label{lem:cover}
Let $\beta\in \Delta$ and let $p:\tilde L\to L^\beta$ be a universal cover. Then, 
\[
    \ker p = \left\{\prod_{\alpha\in \Delta\setminus\{\beta\}}\alpha^{\vee}(\zeta^{c_{\alpha}})\;;\;\zeta^{c_{\beta} }= 1\right\}\subseteq Z(\tilde L).
\]
where $(c_{\alpha})_{\alpha\in \Delta}$ are the positive integers determined by the following conditions:
\[
    \sum_{\alpha\in \Delta} c_{\alpha}\alpha^{\vee}= 0,\quad \gcd(c_{\alpha})_{\alpha\in \Delta} = 1.
\]
\end{lemm}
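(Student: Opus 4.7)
The plan is to realise both tori $T_0$ and $T_{\til L}$ as explicit quotients of products of copies of $\Cc$ indexed by $\Delta$ and $\Delta\setminus\{\beta\}$ respectively, and then to read $\ker p$ off by comparing the two presentations. The key structural input is the standard fact that, since $G$ is simply connected, the affine simple coroots $\{\alpha^\vee:\alpha\in\Delta\}$ generate $\mathbf{X}_*(T_0)$ as a $\Z$-module, with unique (up to scalar) linear relation $\sum_{\alpha\in\Delta}c_{\alpha}\alpha^{\vee}=0$; this is exactly the content of the definition of the $c_\alpha$'s stated in the lemma.

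Explicitly, this yields a short exact sequence $0\to\Z\xrightarrow{(c_\alpha)}\Z^{\Delta}\to \mathbf{X}_*(T_0)\to 0$. Tensoring with $\Cc$ produces a surjection $q:\Cc^{\Delta}\twoheadrightarrow T_0$, $(t_\alpha)_{\alpha\in\Delta}\mapsto\prod_{\alpha\in\Delta}\alpha^{\vee}(t_\alpha)$, whose kernel is the one-parameter subgroup $\{(\zeta^{c_\alpha})_{\alpha\in\Delta}:\zeta\in\Cc\}$. In parallel, since $\til L$ is simply connected with simple roots $\Delta\setminus\{\beta\}$ (viewed as a finite Dynkin diagram), its cocharacter lattice $\mathbf{X}_*(T_{\til L})$ is freely generated by $\{\alpha^\vee:\alpha\in\Delta\setminus\{\beta\}\}$, and hence $T_{\til L}\cong\Cc^{\Delta\setminus\{\beta\}}$ via $(t_\alpha)_{\alpha\neq\beta}\mapsto\prod_{\alpha\neq\beta}\alpha^\vee(t_\alpha)$. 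Under these identifications, the restriction of $p$ to maximal tori factors as the composite $\Cc^{\Delta\setminus\{\beta\}}\hookrightarrow\Cc^{\Delta}\xrightarrow{q}T_0$, where the inclusion fills in $t_\beta:=1$ in the $\beta$-slot.

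Reading off the kernel is then immediate: $(t_\alpha)_{\alpha\neq\beta}\in\ker p$ iff the extended tuple with $t_\beta=1$ lies in $\ker q$, which forces $t_\alpha=\zeta^{c_\alpha}$ for $\alpha\neq\beta$ and $\zeta^{c_\beta}=1$, yielding exactly the formula in the statement. Containment $\ker p\subseteq Z(\til L)$ is automatic since isogeny kernels are central. The one non-formal ingredient is the integral presentation of $\mathbf{X}_*(T_0)$ at the top of the argument: the fact that $\{\alpha^\vee\}_{\alpha\in\Delta}$ generates $\mathbf{X}_*(T_0)$ over $\Z$ (not merely rationally) is the standard structural statement for affine root systems attached to simply connected groups; since the present setup allows $\sigma$ to be a pinned automorphism of arbitrary order, one may either cite a reference from affine root-system theory or verify the claim case-by-case from the explicit affine Dynkin diagrams enumerated in \S\,\ref{ssec:affine}.
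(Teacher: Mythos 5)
Your proof is correct and rests on the same key structural input as the paper's, namely that $\{\alpha^\vee\}_{\alpha\in\Delta}$ generates $\mathbf{X}_*(T_0)$ over $\Z$ (Kac, \S 8.3 in the twisted case) with the single relation $\sum_\alpha c_\alpha\alpha^\vee = 0$. The paper computes the cokernel $\mathbf{X}_*(T_0)/\mathbf{X}_*(\tilde T_0)\cong\Z/c_\beta\Z$ at the lattice level, while you run the same argument after applying $-\otimes_\Z\Cc$ and read the kernel off directly at the level of tori; these are just dual phrasings of one computation.
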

\begin{proof}
Let $\tilde T_0$ be the inverse image of $T_0\subseteq L^{\beta}$ in $\tilde L$. Then, the natural map $\bX_*(\tilde T_0)\to \bX_*(T_0)$ is an inclusion of sublattice of finite index. Since $G$ is simply connected, $\{\alpha^{\vee}\}_{\alpha\in\Delta}$ generates $\bX_*(T_0) = \bX_*(T)^{\sigma}$ as abelian group (see~\cite[\S 8.3]{Ka} for the cases $\sigma\neq 1$.) It follows that 
\[
    \Z / c_{\beta}\Z\xrightarrow{\sim}\bX_*(T_0) / \bX_*(\tilde T_0),\quad k + c_{\beta}\Z\mapsto k\sum_{\alpha\in \Delta\setminus\{\beta\}}c_{\alpha}\alpha^{\vee}
\]
and thus the kernel of $\tilde L\to L^{\beta}$ is given by $\{\prod_{\alpha\in \Delta\setminus\{\beta\}}\alpha^{\vee}(\zeta^{c_{\alpha}})\;;\;\zeta\in \mu_{c_{\beta}}\}$. 
\end{proof}

\begin{lemm}\label{lem:rank0}
Suppose that $(\beta, m, \cO_0, \cC_0)\in \Pi'$ satisfies $m/(\ord \sigma) > h$, where $h = \sum_{\alpha\in \Delta}b_{\alpha}$ is the (twisted) Coxeter number for $(G, \sigma)$. Then, its image $(x, \cO, \cC)\in \Pi$ satisfies $\fg_{x, 0} = \fl^{\beta}_{x, 0}$ and $\fg_{x, 1} = \fl^{\beta}_{x, 1}$. In particular, $\dim \fg_{x, 0}  = \dim\fg_{x, 1}$. 
\end{lemm}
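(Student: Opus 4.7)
The plan is to reduce both equalities $\fg_{x,0}=\fl^\beta_{x,0}$ and $\fg_{x,1}=\fl^\beta_{x,1}$ to parallel applications of Kac's structural theorem (\cite[Proposition~8.6]{Ka}, recalled at the end of~\autoref{ssec:dim}), once the Kac coordinate $s_\beta:=m\beta(x)$ is shown to be at least~$2$.

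First I compute the Kac coordinates of $x$. From the construction of $(x,\cO,\cC)$ in the proof of~\autoref{lemm:pi'}, $x=v+\tfrac{1}{2m}\iota$, where $v$ is the vertex of the fundamental alcove $\kappa$ opposite to $\beta$ and $\iota$ is the integral cocharacter attached to the $\fsl_2$-triple for $\cO_0$. Hence $s_\alpha=\rho(\alpha)\in\{0,1\}$ for $\alpha\in\Delta\setminus\{\beta\}$, while $s_\beta$ is recovered from the normalisation $\sum_\alpha b_\alpha s_\alpha=m/\ord\sigma$. Writing $C=\sum_{\alpha\neq\beta}b_\alpha\rho(\alpha)$ and combining the bound $C\le h-b_\beta$ with the hypothesis $m/\ord\sigma>h$,
\[
b_\beta\,s_\beta \;=\; \frac{m}{\ord\sigma}-C \;>\; h-(h-b_\beta) \;=\; b_\beta,
\]
so $s_\beta>1$; since $s_\beta\in\Z$ by~\eqref{eq:div}, this forces $s_\beta\ge 2$.

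Since $s_\beta\ge 2$, both sets $\Delta^{0}:=\{\alpha\in\Delta:s_\alpha=0\}$ and $\Delta^{1}:=\{\alpha\in\Delta:s_\alpha=1\}$ are contained in $\Delta\setminus\{\beta\}$ and coincide with $\{\alpha\in\Delta\setminus\{\beta\}:\rho(\alpha)=0\}$ and $\{\alpha\in\Delta\setminus\{\beta\}:\rho(\alpha)=1\}$, respectively. Kac's theorem, applied to the $\Z/m\Z$-grading of $\fg$ determined by $x$, then describes $\fg_{x,0}$ as a reductive Lie algebra whose derived subalgebra has Dynkin type $\Delta^{0}$ and whose centre has dimension $\ell-\#\Delta^{0}$, and identifies the $\fg_{x,0}$-module $\fg_{x,1}$ as the direct sum of irreducibles with highest weights $-\alpha$ for $\alpha\in\Delta^{1}$. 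The analogous statement for the $\Z$-grading of $\fl^\beta$ defined by the cocharacter $\iota$, whose set of simple roots is $\Delta\setminus\{\beta\}$ with grading function $\rho$, yields identical descriptions of $\fl^\beta_{x,0}$ and $\fl^\beta_{x,1}$. In particular, $\dim\fg_{x,i}=\dim\fl^\beta_{x,i}$ for $i=0,1$.

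The tautological inclusions $\fl^\beta_{x,i}\subseteq\fg_{x,i}$, following from $\fl^\beta\subseteq\fg$ and the compatibility of the two gradings, combined with the dimensional identities above, force the desired equalities. The last assertion $\dim\fg_{x,0}=\dim\fg_{x,1}$ then reduces to $\dim\fl^\beta_{x,0}=\dim\fl^\beta_{x,1}$, which holds because $\cO_0\cap\fl^\beta_1$ is the open orbit of a cuspidal (hence distinguished) pair on $\fl^\beta$. The main subtlety is the parallel invocation of Kac's theorem in the affine and finite settings, with the matching of the combinatorial data $\Delta^{0},\Delta^{1}$ relying decisively on $s_\beta\notin\{0,1\}$, which the hypothesis $m/\ord\sigma>h$ precisely guarantees.
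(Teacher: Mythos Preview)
Your argument is correct and follows essentially the same approach as the paper's: both hinge on the inequality $b_\beta\,s_\beta = m/\ord\sigma - \sum_{\alpha\neq\beta} b_\alpha\rho(\alpha) > b_\beta$ to force $s_\beta \ge 2$. The paper then concludes slightly more directly, observing that once $n_\beta\ge 2$ no affine root of degree $0$ or $1$ can involve $\beta$, so $\Phi(0)=\Phi^\beta(0)$ and $\Phi(1)=\Phi^\beta(1)$ and the equalities $\fg_{x,i}=\fl^\beta_{x,i}$ drop out of the root-space decomposition, rather than routing through Kac's structure theorem and a dimension count as you do.
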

\begin{proof}
    When $m/\ord \sigma > h$, we have 
    \[
        \frac{m}{\ord \sigma} - \sum_{\alpha\in \Delta\setminus \{\beta\}} b_{\alpha}\rho(\alpha) \ge \frac{m}{\ord \sigma} - h + b_{\beta} > b_{\beta}.
    \]
    Therefore, the Kac coordinates $(n_{\alpha})_{\alpha\in \Delta}$ given by $n_{\alpha} = \rho(\alpha)$ for $\alpha\in \Delta\setminus\{\beta\}$ and $n_{\beta} = b_{\beta}^{-1}(\frac{m}{\ord \sigma} - \sum_{\alpha\in \Delta\setminus \{\beta\}} b_{\alpha}\rho(\alpha))$ are normalised so that $n_\alpha\in \N$ for each $\alpha\in \Delta$ and $\gcd\{n_{\alpha}\mid \alpha\in \Delta\} = 1$. 
    Moreover, we have $n_{\beta} > 1$. Let $\Phi^{\beta} = \Z( \Delta\setminus\{\beta\})\cap \Phi$. The Kac coordinates $(n_{\alpha})_{\alpha\in \Delta}$ yield a grading $\Phi = \bigsqcup_{n\in \Z} \Phi(n)$ satisfying $\alpha\in \Phi(n_{\alpha})$ for $\alpha\in \Delta$. Setting $\Phi^{\beta}(n) = \Phi^{\beta}\cap \Phi(n)$, we have $\Phi^{\beta}(0) = \Phi(0)$ and $\Phi^{\beta}(1) = \Phi(1)$, so that
    \[
    \fg_{x, 0} = \ft_0\oplus \bigoplus_{\alpha\in \Phi^{\beta}(0)}\fg_{\alpha} = \fl^{\beta}_{x,0}, \quad
    \fg_{x, 1}  = \bigoplus_{\alpha\in \Phi^{\beta}(1)}\fg_{\alpha} = \fl^{\beta}_{x,1}. 
    \]
    Since $L^{\beta}$ is semisimple and the $\Z$-grading on $\fl^{\beta}_{x, *}$ arises from a distinguished nilpotent element, we have $\dim\fl^{\beta}_{x, 0} = \dim\fl^{\beta}_{x, 1}$ (see~\cite[5.7.5]{C93}, for example). 
\end{proof}
Now, we describe our method to classify bi-orbital cuspidal sheaves on graded Lie algebras of exceptional types. 
For each pair $(G, \sigma)$, where $G$ is a simply connected almost simple group endowed with a pinning and $\sigma$ is a pinned automorphism, we can enumerate the set $\Pi'$ and check for each element $(\beta, m, \cO_0, \cC_0)\in \Pi'$ whether its image $(x, \cO, \cC)\in \Pi$ satisfies $\dim \fg_{x, 0} = \dim \fg_{x, 1}$.  \par

There are finitely many possible $(\beta, \cO_0, \cC_0)$ which can appear. To enumerate the $(\beta, \cO_0, \cC_0)$, fix $\beta\in \Delta$ and let $\tilde L\to L^\beta$ be a universal cover. The cuspidal pairs $(\cO_0, \cC_0)$ on $\tilde L$ have been classified by Lusztig~\cite{L84}. In order to decide whether $\cC_0$ descends to an $L^{\beta}$-equivariant local system, we use \autoref{lem:cover} to determine whether the kernel of the universal cover $p:\tilde L\to L^{\beta}$ acts trivially on $\cC_0$. \par
On the other hand, given $(\beta, \cO_0, \cC_0)$, \autoref{lem:rank0} implies that we only need to check the identity $\dim\fg_{x,0} = \dim\fg_{x,1}$ for finitely many $m\in \Z_{\ge 1}$ (up to the twisted Coxeter number.) \par

We obtain the set $\Pi'$ tabulated as in~\autoref{thm:bisup-excep}, where the column Kac shows the element $x\in \kappa$ in its normalised Kac 
coordinates and $L=L^\beta$.

\section{Classical types: proof of~\autoref{thm-classical}}\label{sec:classical}
Using the strategy described in~\autoref{ssec:strategy}, we see that~\autoref{thm-classical} follows from~\autoref{prop:distinguished-II/III},~\autoref{prop:distinguished-I}, and~\autoref{thm:bisup-classical}. To give more details, in this section, we demonstrate how to work this out in the case of cuspidal character sheaves with non-trivial $\varepsilon$-action in type $\BDI$. 

Assume now $(G, \theta)$ is of type $\BDI$, so that $I = \Z/m\Z$ with $m = 2l$. We may write $G = \Spin(V)$, where $V$ is equipped with a quadratic form and a grading $V = \bigoplus_{i\in I} V_i$. 
\begin{prop}
Assume that $\cF$ is a cuspidal character sheaf on $\fg_1$ such that $\varepsilon$ acts as $-1$ on $\cF$. Let $\cS$ be the supporting stratum of $\cF$. Then, given any $x = x_s + x_n\in \cS$, we have $V = V' \oplus V''$ with $V' = x_s(V)$ and $V'' = \ker(x_s)$ satisfying
\begin{enumerate}
    \item $\dim_I V' = r\delta$,
    \item $x_n|_{V'} = 0$ and $x_s|_{V'}$ is regular semisimple, and 
    \item the element $x_n|_{V''}\in \fg(V'')$ lies in the orbit labelled by the multi-segment $\mathbf{s}-\sum_{k=0}^{2l-1}r[k,k]$, where $\mathbf{s}$ is as in~\autoref{thm-classical} (3) (b). 
\end{enumerate}
Conversely, if $x\in \fg_1$ is of the form described above, then the stratum $\cS\subseteq \fg_1$ containing $x$ is the supporting stratum of some cuspidal character sheaf on which $\varepsilon$ acts as $-1$.
\end{prop}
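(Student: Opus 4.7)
The plan is to deduce the proposition by unpacking the description of the supporting stratum $\cS = G_0\cdot(Z(\fl)_1^\circ + x_n)$ from~\eqref{eqn-levi} in the specific shape given by~\autoref{thm-classical}~(3)(b), combined with the structural results of~\autoref{prop:distinguished-I}. Since $\varepsilon$ acts non-trivially on $\cF$,~\autoref{thm-classical}~(3)(b) will give $\cS = G_0\cdot(\fc_s + n)$ with $n\in \cO_{\bs}$ and $\fc_s$ the unique $\fc$-stratum whose Levi $\fl := Z_\fg(\fc_s)$ is totally ramified with $\fl_{\on{der}}\cong \fspin_{N-2lr}$, and whose graded structure has dimension vector $\bd_{\bs - \sum_{k=0}^{2l-1} r[k,k]}$. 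Up to $G_0$-conjugation I would then take $x_s\in \fc_s$ and $x_n\in\cO_{\bs}$.

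For the forward direction, the antisymmetry of $x_s$ will yield an orthogonal $I$-graded decomposition $V = V'\oplus V''$ with $V' = x_s V$ and $V'' = \ker x_s$, both preserved by $\fl$. On $V''$ one has $x_s|_{V''}=0$, while $x_s|_{V'}$ is regular semisimple because $\fc_s^\circ$ is characterised by $Z_\fg(\fc_s)=\fl$. Hence $L$ should split as $T'\times\Spin(V'')$ with $T'$ a maximal torus of $\SO(V')$ commuting with $\theta$, and $L_{\on{der}}=\Spin(V'')$ will act trivially on $V'$. The totally ramified hypothesis will force $\theta$ to cyclically permute the rank-one factors of $T'$; equivalently, $\dim V'_k$ is independent of $k\in I$, giving~(1). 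Conclusion~(2) then follows because $x_n\in \Lie L_{\on{der}}$ must vanish on $V'$. For~(3), the segments $[k,k]$ in $\bs$ account exactly for the $x_s$-eigenlines in $V'$, so removing them yields the multi-segment labelling $x_n|_{V''}$ viewed inside $\fh = \Lie L_{\on{der}}$.

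For the converse, starting from $x = x_s + x_n$ satisfying~(1)--(3), I will check that the $G_0$-orbit of $x_s$ meets the distinguished $\fc$-stratum $\fc_s$ from~\autoref{thm-classical}~(3)(b): the centraliser $Z_G(x_s)_{\on{der}}\cong \Spin(V'')\cong\Spin_{N-2lr}$ with the required graded structure by~\autoref{prop:distinguished-I}, and $\dim_I V' = r\delta$ forces the torus factor $T'$ in $Z_G(x_s)$ to be totally ramified. Condition~(3) then places $x_n$ in $\cO_{\bs}$, whence $x\in \cS$, and~\autoref{thm-classical}~(3)(b)(ii) guarantees the containing stratum is the supporting stratum of a cuspidal character sheaf on which $\varepsilon$ acts as $-1$.

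The hard part will be the identification, in both directions, between the totally ramified condition on $L$ and the numerical condition $\dim_I V' = r\delta$. This amounts to translating the Galois-theoretic notion of total ramification into the combinatorial language of cyclically graded vector spaces: $\theta$ acts on $T'$ by a permutation of its rank-one components, and total ramification is equivalent to this permutation being a single $m$-cycle on each $\theta$-orbit, which forces all $\dim V'_k$ to coincide. Once this correspondence is nailed down, the remaining bookkeeping with multi-segments should follow directly from~\autoref{lem:multi-seg} and~\autoref{thm-classical}.
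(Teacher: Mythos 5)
Your plan goes top-down: you want to start from the description of $\cS$ given in \autoref{thm-classical}~(3)(b) and translate it into the linear-algebraic form of the proposition. The paper goes bottom-up: it takes an arbitrary point $x=x_s+x_n\in\cS$, forms the orthogonal $I$-graded decomposition $V=V'\oplus V''$ with $V'=\im x_s$, $V''=\ker x_s$, applies \autoref{prop:biorbital-classical} to $\cF^\circ|_{G''x''_n}$ to force $(V'',x_n|_{V''})$ into the finite list of bi-orbital supercuspidal configurations (which \emph{is} the content of part~(3)), and then uses the dimension identity $\dim\fg''_1=\dim\fg''_0$ together with $\dim\fZ_{x,1}=\dim\fg_1-\dim\fg_0=\dim V'_0$ to deduce both that $\dim V'_0=r$ and that $x_s|_{V'}$ is regular semisimple with $x_n|_{V'}=0$. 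The converse is handled by the nearby-cycle construction from~\cite{LTVX}, not by appealing back to \autoref{thm-classical}.

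Your route is close to circular in the logical organization of the paper: the section opening states that \autoref{thm-classical} follows from \autoref{prop:distinguished-II/III}, \autoref{prop:distinguished-I} and \autoref{thm:bisup-classical}, and this proposition is precisely the worked-out demonstration of the $\BDI^{\varepsilon=-1}$ case. Citing \autoref{thm-classical}~(3)(b) to prove the proposition therefore assumes the very case that the proposition is meant to establish; the paper instead gets the multi-segment constraint from \autoref{prop:biorbital-classical} directly, which you never invoke. There is also a misconception in what you flag as the ``hard part'': the constancy of $\dim V'_k$ over $k\in I$ does not require unpacking ``totally ramified''. It is automatic from the fact that $x_s$ is a degree-$1$ semisimple operator that restricts to an invertible map on $V'=\im x_s$, hence gives isomorphisms $V'_k\xrightarrow{\sim}V'_{k+1}$. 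What genuinely requires work --- pinning down the common dimension as $r$, showing regular semisimplicity of $x_s|_{V'}$, and identifying $(V'',x_n|_{V''})$ with an orbit from the bi-orbital list --- is exactly what the dimension identity and \autoref{prop:biorbital-classical} supply, and your proposal does not engage with either of these ingredients.
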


\begin{proof}
Let $\cS\subseteq \fg_1$ be the stratum of a cuspidal character sheaf $\cF$ and let $x = x_s + x_n\in \cS$. Then we have an orthogonal decomposition $V = V'\oplus V''$, where $V' = \im(x_s)$ and $V'' = \ker(x_s)$. Write $\cF = \IC(\cF^{\circ})$, where $\cF^{\circ} = \cF|_{\cS}[-\dim \cS]$. 
Set $G'' = \Spin(V'')$ and $x'' = x|_{V''}$. Then, $\cF^{\circ} |_{G''x_n''}$ is a sum of bi-orbital supercuspidal local systems with $\varepsilon = -1$, which implies that $(V'', x_n|_{V''})$ is of the form described in~\autoref{prop:biorbital-classical}. Moreover, we have $\dim \fg''_1 = \dim \fg''_0$ due to the existence of bi-orbital supercuspidal sheaves, which implies $\dim \fg_1 - \dim \fg_0 = \dim V'_0$. The condition $\dim\fZ_{x, 1} = \dim \fg_1 - \dim \fg_0 = \dim V'_0$ implies $x_s|_{V'}$ is regular semisimple, thus $x_n|_{V'} = 0$. \par
Conversely, let $x\in \fg_1$ be of the described form. Then, setting $H = Z_{G}(x_s)^{\mathrm{der}}$, $\fh_1$ admits a bi-orbital $H_0$-equivariant sheaf, and $\varepsilon$ acts on every such sheaf by $-1$. The existence of cuspidal character sheaves with supporting stratum $\cS$ and $\varepsilon = -1$ follows from the nearby-cycle construction.
\end{proof}

\section{Exceptional types: proof of~\autoref{thm-excp} and~\autoref{coro:weyl}}\label{sec:exceptional}
In this section we prove~\autoref{thm-excp} and~\autoref{coro:weyl} for exceptional types. We make use of the construction of automorphisms $\theta:G\to G$ in~\cite{RLYG}. Recall that $r=\dim\Lg_1-\dim\Lg_0$.

In view of~\autoref{table 2},~\autoref{thm-excp} for the case when $r=0$ follows from~\autoref{thm:bisup-excep} and there is nothing to prove for~\autoref{theo:cuspidal-cartan} as the cuspidal $\fc$-stratum  $\fd=0$. It remains to consider the cases when $r=\dim\fc>0$, the grading $2_a$ of $E_6$ and the grading $3_a$ of $E_7$. When $r=\dim\fc>0$,~\autoref{theo:cuspidal-cartan} is immediate as the cuspidal $\fc$-stratum  $\fd=\fc$, the Cartan subspace.
\subsection{The case when \texorpdfstring{$r=\dim\fc>0$}{r = dim c > 0}}
Suppose that $r=\dim\fc>0$. If the grading is GIT stable, then as in~\cite{LTVX}, all cupsidal character sheaves have full support. If the grading is non GIT stable, then comparing~\autoref{table 1} and~\autoref{table 2}, we see that, to prove~\autoref{thm-excp}, it suffices to show that $\theta|_{Z_\Lg(\fc)_{\on{der}}}$ affords a bi-orbital cuspidal character sheaf with support equal to $(Z_\Lg(\fc)_{\on{der}})_1$ for all gradings in Table~\ref{table 2} except for the grading $10_a$ of $E_7$.  

These claims follow from the following proposition.

\begin{prop}\label{prop-centralisers}
For the gradings in the third column of Table~\ref{table 2}, the type of the root system of $Z_\Lg(\fc)_{\on{der}}$ and $\theta|_{Z_\Lg(\fc)_{\on{der}}}$ are given as follows. 
\begin{longtable}{p{1cm}|p{2cm}|p{3.2cm}|p{7.8cm}}
\hline
Type&grading&type of $Z_\Lg(\fc)_{\on{der}}$&$\theta|_{Z_\Lg(\fc)_{\on{der}}}$\\
\hline
$^2E_6$&$10_b$&$A_1$&$\theta_{10}$\\
\hline
$E_7$&$4_a$&$A_1\times A_1\times A_1$&restricts to each factor $A_1$ as $\theta_4$
 \\\hline
$E_7$&$8_a$&$A_1\times A_1\times A_1$& permutes the first two factors of $A_1$ with $\theta^2|_{A_1}=\theta_4$ and $\theta$ restricts to the third factor of $A_1$ as $\theta_8$\\\hline
$E_7$&$10_a$&$A_2$&outer of order $10$ with quiver $\xymatrix@R=.5pc{&1\ar[r]&0\ar[dd]\\1\ar[ur]&&\\&1\ar[ul]&0\ar[l]}$\\\hline
$E_7$&$10_b$&$A_2$&outer of order $10$ with quiver $\xymatrix@R=.5pc{&0\ar[r]&1\ar[dd]\\1\ar[ur]&&\\&0\ar[ul]&1\ar[l]}$\\\hline
$E_7$&$12_a$&$A_1\times A_1\times A_1$&permutes three factors of $A_1$ and $\theta^3|_{A_1}=\theta_4$\\
\hline
$E_8$&$9_a$&$A_2$&$\theta_9$\\
\hline
$E_8$&$12_e$&$D_4$&$^{3}D_4$ with Kac diagram $\xymatrix{{\substack{1\\\circ\\\,}}&{\substack{0\\\circ\\\,}}\ar@{-}[r]\ar@3{<-}[l]&{\substack{3\\\circ\\\,}}}$\\
\hline
$E_8$&$14_a$&$A_1$&$\theta_{14}$\\
\hline
$E_8$&$18_c$&$A_2$&outer of order $18$ with quiver \xymatrix@R=3mm{&0\ar[r]&0\ar[r]&0\ar[r]&1\ar[dd]\\1\ar[ur]&&&&\\&0\ar[ul]&0\ar[l]&0\ar[l]&1\ar[l]}\\
\hline

\end{longtable}
In the above we have written $\theta_m=\on{int}\left(\begin{matrix}\zeta_{2m}&\\&\zeta_{2m}^{-1}\end{matrix}\right)$ for the inner automorphism of order $m$ in type $A_1$, and $\theta_m=\on{int}\left(\begin{matrix}\zeta_{m}&&\\&1&\\&&\zeta_{m}^{-1}\end{matrix}\right)$ for the inner automorphism of order $m$ in type $A_2$. Note that $\theta_m$ affords bi-orbital cuspidal character sheaf if and only if $m\geq 3$ in type $A_1$, and $m\geq 4$ in type $A_2$.
\end{prop}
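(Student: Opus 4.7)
The plan is to treat each of the nine rows of the table separately by extracting, from the Kac data of the grading, the triple $(\fc,\, Z_\Lg(\fc)_{\on{der}},\, \theta|_{Z_\Lg(\fc)_{\on{der}}})$, and then invoking the bi-orbital classifications of~\autoref{thm:bisup-classical} and~\autoref{thm:bisup-excep} together with the final parenthetical remark of the proposition. Throughout, the grading is encoded by a point $x\in\kappa$ with normalised Kac labels $(s_\alpha)_{\alpha\in\Delta}$, so that $\Lg_0$ is generated by $T_0$ and the root subspaces attached to the $\alpha\in\Delta$ with $s_\alpha=0$, while $\Lg_1$ decomposes as a $\Lg_0$-module with irreducible summands indexed by $\{\alpha\mid s_\alpha=1\}$; see~\S\ref{ssec:affine} and~\S\ref{ssec:dim}.

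First I would identify $Z_\Lg(\fc)_{\on{der}}$. For a generic semisimple $s\in\fc$, the centraliser $Z_\Lg(s)$ is the pseudo-Levi cut out of the affine root system by the subdiagram obtained by erasing the vertices $\alpha$ with $s_\alpha=1$, and since $\fc$ is a Cartan subspace of $\Lg_1$ one has $Z_\Lg(\fc)=Z_\Lg(s)$. Reading off these subdiagrams row by row produces the Cartan types of column~3: in particular $A_1^3$ for the rows $4_a$, $8_a$, $12_a$ of $E_7$; $A_2$ for $10_a$, $10_b$ of $E_7$ and for $9_a$, $18_c$ of $E_8$; $A_1$ for $14_a$ of $E_8$ and for $10_b$ of $\iiEvi$; and $D_4$ for $12_e$ of $E_8$.

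Next I would determine $\theta|_{Z_\Lg(\fc)_{\on{der}}}$. Writing $\theta=\sigma\cdot\Ad\exp(2\pi\mathbf{i}x/m)$, the pinned part $\sigma$ preserves the Kac labels and hence restricts to a permutation of the simple factors of $Z_\Lg(\fc)_{\on{der}}$, while $\Ad\exp(2\pi\mathbf{i}x/m)$ restricts to an explicit element of $T_0\cap Z_\Lg(\fc)_{\on{der}}$ whose coordinates on each factor are read off directly from the $s_\alpha$. Composing the two ingredients produces the inner automorphism $\theta_m$ of $A_1$ or $A_2$ in the cases where $\sigma$ fixes the simple factors of $Z_\Lg(\fc)_{\on{der}}$ pointwise, and one of the cyclic outer quivers of column~4 (in particular the $\iiiDiv$ Kac diagram $(1,0,3)$ for $12_e$ of $E_8$, and the cyclic triple permutation of the three $A_1$-factors for $12_a$ of $E_7$) in the remaining cases.

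The final step is a direct consistency check against the existing classifications: for inner $\theta_m$ on $A_1$ with $m\ge 3$ or on $A_2$ with $m\ge 4$, the bi-orbital cuspidal with full support is produced by~\autoref{thm:bisup-classical}(1) applied with $N=2,3$ since the condition $m\nmid N$ is automatic for every $m$ arising in the table; the outer $A_2$ quivers of orders $10$ and $18$ fall within the $\iiA$-family of~\autoref{thm:bisup-classical}, and the order-$12$ $\iiiDiv$ grading $(1,0,3)$ is a rank-zero entry of the $\iiiDiv$ table of~\autoref{thm:bisup-excep}. The main obstacle will be the unambiguous determination of the cyclic $\sigma$-action on the three $A_1$-factors in the rows $4_a$, $8_a$, $12_a$ of $E_7$, which is not visible from the subdiagram alone; I would handle this by fixing an explicit pinning of $E_7$ as in~\cite{RLYG} and tracking the orbit of each erased Kac vertex under the group generated by $\sigma$ and $\Ad\exp(2\pi\mathbf{i}x/m)$, row by row.
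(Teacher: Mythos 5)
Your very first step — reading off the Cartan type of $Z_\Lg(\fc)_{\on{der}}$ from the Kac diagram by erasing the vertices $\alpha$ with $s_\alpha = 1$ — is not correct, and the error is already visible from the paper's own example immediately preceding the statement. For the grading $10_b$ of $\iiEvi$, the Kac diagram is $(1,1,0,1,0)$; the subdiagram of vertices with label $0$ is of type $A_1 \times A_1$, and indeed the paper computes $(\Lg_0)_{\on{der}} \cong \fsl_2 \oplus \fsl_2$. Yet the proposition asserts $Z_\Lg(\fc)_{\on{der}}$ is a \emph{single} $A_1$. What your procedure produces is (essentially) $(\Lg_0)_{\on{der}}$, not $Z_\Lg(\fc)_{\on{der}}$; these are different objects, and in general they are not even isomorphic. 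The confusion stems from treating a generic $s \in \fc$ as if it lay in the torus $T_0$ used to set up Kac coordinates. But $\fc \subset \Lg_1$, whereas $\ft_0 \subset \Lg_0$, so $s$ is definitely not in $\ft_0$, and the pseudo-Levi $Z_\Lg(s)$ cannot be located by deleting nodes from the twisted affine Dynkin diagram. The same misconception propagates into your second step: writing $\theta = \sigma \cdot \Ad\exp(2\pi \mathbf{i} x / m)$ is correct, but since $Z_\Lg(\fc)$ is not cut out by roots of the affine diagram, this decomposition gives no direct handle on the restriction of $\theta$ to $Z_\Lg(\fc)_{\on{der}}$.

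The paper circumvents this by changing coordinates entirely. It realises $\theta = \on{Int}\,n_w$ for a suitable $w \in W_G$ from the RLYG tables, acting on a $\theta$-stable maximal torus $T$ with Lie algebra $\ft$. Then $\fc = \ft_1$ is literally the $\zeta_m$-eigenspace of $w$ acting on $\ft$, and $Z_\Lg(\fc)$ is computed by the perfectly ordinary device of identifying the roots vanishing on $\fc$ (Step 2 of the paper's recipe). The delicate part is Step 3: to pin down the automorphism of $Z_\Lg(\fc)_{\on{der}}$ one must determine the structure constants $c_\alpha$ in $\theta(X_\alpha) = c_\alpha X_{\theta\alpha}$, and the paper does this by combining the orbit structure of the roots under $w$ with the eigenspace dimensions $\dim\Lg_i$ and a rationality argument (\autoref{lem-root spaces} plus the relation $c_\alpha c_{\theta\alpha} c_\beta \in \Q$ drawn from the Chevalley structure constants). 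None of this machinery appears in your proposal. To repair the argument you would need to abandon the Kac-diagram shortcut, work with the explicit $w$ from RLYG as the paper does, and carry out the root-by-root computation.
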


We fix a canonical Cartan subalgebra $\Lt\supset\La$ of $\Lg$ as in~\cite[\S3.1]{RLYG} and let $T=Z_G(\Lt)$ the corresponding maximal torus of $G$. Let $W_G=N_G(T)/T$ be the Weyl group of $G$. Let $\Phi\subset\Lt^*$ be the set of roots and $\Delta$ a set of simple roots. We write $\Lg_\alpha$ for the root space corresponding to $\alpha\in\Phi$. We choose a set of Chevalley basis of $\Lg$ consisting of root vectors $X_\alpha\in\Lg_\alpha$, $\alpha\in\Phi$, and $h_{\alpha_i}\in\Lt$, $\alpha_i\in\Delta$ (see for example~\cite[\S25]{H}). In particular, we have
\bern
&&[X_{\alpha_i},X_{-\alpha_i}]=h_{\alpha_i},\ [X_\alpha,X_{-\alpha}]:=h_\alpha\in{\bZ}\text{-span of }h_{\alpha_i},\\
&& [X_\alpha,X_\beta]=n_{\alpha,\beta}X_{\alpha+\beta}\text{ for some integer $n_{\alpha,\beta}$ if $\alpha+\beta\in\Phi$}.
\eern

Suppose that $G$ is of type $E_7$ or $E_8$. Then each positive grading of $\Lg$ can be induced from $\theta=\on{Int}n_w:G\to G$ for some $n_w\in N_G(T)$ and the choices of $w$ are given in~\cite[Table 30-31]{RLYG}. 

Let now $\theta=\on{Int}n_w:G\to G$. We will also write $\theta$ for $d\theta:\Lg\to\Lg$. Let $\Lg_i\subset \Lg$ be the eigenspace of $\theta$ with eigenvalue $\zeta_m^i$, where $\zeta_m$ is a fixed primitive $m$-th root of 1. Since $T$ is $\theta$-stable, $\theta$ induces a bijection $\Phi\to\Phi,\alpha\mapsto\theta\alpha$ such that $\theta(\Lg_\alpha)=\Lg_{\theta\alpha}$. In fact, $\theta\alpha=w\alpha$. Let us write
\beqn
\Lg_{\{\alpha\}}=\sum_{i\in\mathbb{Z}_{\geq 0}}\Lg_{\theta^i\alpha}\text{ and }d_\alpha=\dim\Lg_{\{\alpha\}}.
\eeqn
It is clear that $d_\alpha$ divides the order of $\theta$. We write $c_\alpha\in\bC^*$ for the constants such that
\beq
\theta(X_\alpha)=c_\alpha X_{\theta\alpha},\,\alpha\in\Phi.
\eeq
Then
\beqn
\left(\prod_{i=0}^{d_\alpha-1}c_{\theta^i\alpha}\right)^{\frac{m}{d_\alpha}}=1,\ c_\alpha c_{-\alpha}=1.
\eeqn

\begin{lemm}\label{lem-root spaces}
Suppose that $\theta=\on{Int}n_w:G\to G$ has order $m$. Then
\begin{enumerate}
\item $Z_\Lg(\fc)\supset\Lt\oplus\sum_{d_\alpha<m}\Lg_{\{\alpha\}}$.  
\item if $d_\alpha=m$, then $\on{dim}\Lg_{\{\alpha\}}\cap\Lg_i=1$ for each $i\in\bZ/m$.
\item if $d_\alpha=d<m$, then $\on{dim}\Lg_{\{\alpha\}}\cap\Lg_i=1$ for each $i\in\bZ/m$ such that $\zeta_m^{id}=\prod_{i=0}^{d-1}c_{\theta^i\alpha}$, and $\on{dim}\Lg_{\{\alpha\}}\cap\Lg_i=0$ otherwise.
\end{enumerate}
\end{lemm}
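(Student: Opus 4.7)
The three assertions have distinct natures and I would treat them separately, but all three reduce to linear algebra on $\Lt$ and on each $\theta$-stable subspace $\Lg_{\{\alpha\}}$.

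For part (1), my plan is to reduce the centraliser condition to a condition on roots. Since $\theta=\on{Int}(n_w)$ with $n_w\in N_G(T)$, the Cartan $\Lt$ is $\theta$-stable and $\theta|_\Lt$ acts as $w$; by the RLYG construction the Cartan subspace $\fc\subseteq\Lg_1$ can be taken to lie inside $\Lt_1=\{h\in\Lt\mid wh=\zeta_m h\}$. The identity $(w^k\alpha)(h)=\zeta_m^{-k}\alpha(h)$, valid for every $h\in\Lt_1$ and every $\alpha\in\Phi$, yields upon taking $k=d_\alpha$ the relation $\alpha(h)=\zeta_m^{-d_\alpha}\alpha(h)$. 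Since $d_\alpha\mid m$ but $d_\alpha<m$, the scalar $\zeta_m^{-d_\alpha}$ is not $1$, so $\alpha(h)=0$. Running this over the whole orbit $\{w^k\alpha\}_{0\le k<d_\alpha}$ would give $\Lg_{\{\alpha\}}\subseteq Z_\Lg(\fc)$, and combining with the trivial $\Lt\subseteq Z_\Lg(\fc)$ would complete (1).

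For parts (2) and (3), my plan is to diagonalise $\theta$ explicitly on the $d_\alpha$-dimensional subspace $\Lg_{\{\alpha\}}$ in the basis $\{X_{w^k\alpha}\}_{0\le k<d_\alpha}$. In this basis $\theta$ is a cyclic shift twisted by the scalars $c_{w^k\alpha}$, so $\theta^{d_\alpha}$ acts on every basis vector by the same scalar $\lambda_\alpha:=\prod_{k=0}^{d_\alpha-1}c_{w^k\alpha}$. The minimal polynomial of $\theta|_{\Lg_{\{\alpha\}}}$ then divides $X^{d_\alpha}-\lambda_\alpha$, whose roots in $\C^\times$ are pairwise distinct; hence $\Lg_{\{\alpha\}}$ decomposes into $d_\alpha$ one-dimensional eigenspaces, one for each $d_\alpha$-th root of $\lambda_\alpha$. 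When $d_\alpha=m$, the relation $\theta^m=\on{id}$ forces $\lambda_\alpha=1$, so the eigenvalues are exactly the $m$-th roots of unity, yielding (2). When $d_\alpha=d<m$, a given $\zeta_m^i$ appears as an eigenvalue precisely when $\zeta_m^{id}=\lambda_\alpha$, and otherwise does not appear, yielding (3).

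The only potentially delicate point is the initial identification of $\fc$ with a subspace of $\Lt_1$ in part (1), but this is standard in the RLYG framework for the elements $w$ tabulated in \cite[Tables~30--31]{RLYG}; no serious obstacle is anticipated, and the remaining steps are direct linear-algebraic verifications using the defining relations $\theta(X_\alpha)=c_\alpha X_{\theta\alpha}$ and $(\prod_{i=0}^{d_\alpha-1}c_{\theta^i\alpha})^{m/d_\alpha}=1$ already recorded just before the lemma.
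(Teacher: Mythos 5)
The paper states this lemma without proof (it proceeds directly to the computational steps it enables), so there is no argument in the source to compare against. Your proof is the natural one and is essentially correct: part (1) follows exactly as you say from $\fc\subseteq\Lt_1$ together with $(w^{d_\alpha}\alpha)(h)=\zeta_m^{-d_\alpha}\alpha(h)$ and $d_\alpha\mid m$, $d_\alpha<m$; and parts (2)--(3) come from diagonalising the twisted cyclic shift on $\Lg_{\{\alpha\}}$.

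One small logical gap in parts (2)--(3) is worth closing. From ``the minimal polynomial of $\theta|_{\Lg_{\{\alpha\}}}$ divides the squarefree polynomial $X^{d_\alpha}-\lambda_\alpha$'' you may conclude diagonalizability, but not by itself that \emph{every} $d_\alpha$-th root of $\lambda_\alpha$ actually occurs as an eigenvalue, each with multiplicity one. The cleanest repair is to observe that in the cyclic basis $\{X_{\theta^k\alpha}\}_{0\le k<d_\alpha}$ the matrix of $\theta$ is a twisted cyclic permutation whose \emph{characteristic} polynomial equals $X^{d_\alpha}-\lambda_\alpha$; since this has $d_\alpha$ distinct roots, each eigenspace is one-dimensional. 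Equivalently, for each $d_\alpha$-th root $\mu$ of $\lambda_\alpha$ one can exhibit the eigenvector $\sum_k a_k X_{\theta^k\alpha}$ with $a_k=\mu^{-1}c_{\theta^{k-1}\alpha}\,a_{k-1}$; the consistency condition going once around the cycle is exactly $\mu^{d_\alpha}=\lambda_\alpha$. With this observation in place, both (2) (where $\theta^m=\id$ forces $\lambda_\alpha=1$) and (3) follow precisely as you describe.
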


We will now proceed as follows to prove Proposition~\ref{prop-centralisers}. In each case we choose a $w\in W_G$ as in~\cite[Table 30-31]{RLYG}. 

Step 1. Since we have $\theta(h_{\alpha_i})=h_{w\alpha_i}=\sum n_jh_{\alpha_j}$, where $w\alpha_i=\sum n_j\alpha_j$ (note that we are in the simply-laced situation), it is easy to compute $\Lt_i$, the $\zeta_m^i$-eigenspace of $\Lt$ under $w$, in particular the Cartan subspace $\fc=\Lt_1$. 

Step 2. Compute $Z_\Lg(\fc)$ by evaluating roots on $\fc$.

Step 3. To determine $\theta|_{(Z_\Lg(\fc))_{\on{der}}}$, we compute the orbits of roots found in Step 2 under $w$. We can also easily compute $\dim\Lg_i$ from the Kac diagram of each grading. Making use of Lemma~\ref{lem-root spaces} and comparing with $\dim\Lg_i$'s, we can determine the structure constants $c_\alpha$. We are then able to describe $\theta|_{(Z_\Lg(\fc))_{\on{der}}}$.

We illustrate this procedure in the example of gradings $10_a$ and $10_b$ of $E_7$, and the grading $12_e$ of $E_8$. This also enables us to deal with the grading $10_b$ of $^2E_6$, which can be realised as $10_b$ of $E_7$ restrict to a subalgebra of type $E_6$.

\begin{exam}\label{example-e7}\normalfont The gradings $10_a$ and $10_b$ of $E_7$.

We label the simple roots as follows
\beqn
\xymatrix{{\substack{\alpha_1\\\circ}}\ar@{-}[r]&{\substack{\alpha_3\\\circ}}\ar@{-}[r]&{\substack{\alpha_4\\\circ}}\ar@{-}[r]\ar@{-}[d]&{\substack{\alpha_5\\\circ}}\ar@{-}[r]&{\substack{\alpha_6\\\circ}}\ar@{-}[r]&{\substack{\alpha_7\\\circ}}\\&&{\substack{\circ\\\alpha_2}}&&}
\eeqn
We will write a positive root $\sum_in_i\alpha_i$ as $n_1n_2\cdots n_7$.

Both gradings can be constructed as a lift of $w\in W_G$ (of type $D_6$ in Carter's notation~\cite{Ca}). Let $w=s_{\alpha_1}s_{\alpha_4}s_{\alpha_0}s_{\alpha_3}s_{\alpha_2}s_{\alpha_5}\in W_G$, where $\alpha_0$ is the highest root. Then we have
 \bern
 w:&&\alpha_1\mapsto -1223321,\,\alpha_2\mapsto-0101000,\,\alpha_3\mapsto-1011000,\,\\&&\alpha_4\mapsto1112100,\,\alpha_5\mapsto-0001100,\,\alpha_6\mapsto0001110,\,\alpha_7\mapsto\alpha_7.
 \eern
 We have $\dim\Lt_i=1$, when $i=0,1,3,7,9$, $\dim\Lt_5=2$ and $\dim\Lt_i=0$ otherwise (this can also be deduced from~\cite{Ca} as $w$ is an element of type $D_6$) and
 \beqn
 \fc=\Lt_1=\on{span}\{(1-\zeta_{10},2-\zeta_{10}-\zeta_{10}^{-1},2-\zeta_{10}-\zeta_{10}^{-1},\zeta_{10}^3-2\zeta_{10}^2+2,3-\zeta_{10}-\zeta_{10}^{-1},2,1)\}.
 \eeqn
 Here and below the vectors are in the basis $\{h_{\alpha_i}\}$. 
 
 We check that
 \beqn
 Z_\fg(\fc)=\Lt\oplus\Lg_{\pm1122110}\oplus \Lg_{\pm1122111}\oplus \Lg_{\pm0000001}.
 \eeqn
 where $w(1122110)=-1122111$, $w(1122111)=-1122110$ and $w(0000001)=0000001$.
 Thus $\Lh:=(Z_\fg(\fc))_{\on{der}}$ is of type $A_2$. Note that $h_\beta\in\Lh_0$ and $h_\alpha+h_\beta\in\Lh_5$.
 
 This implies that there are $12$ orbits of size $10$ on the roots. Let us write $E=\Lg_{\pm1122110}\oplus \Lg_{\pm1122111}\oplus \Lg_{\pm0000001}$, $\alpha=1122110$ and $\beta=0000001$.
 
Let us write  $\theta_b:G\to G$ for the automorphism that gives rise to the grading $10_b$. In this case we can check that $\dim\Lg_i=12$, $i=4,6$, $\dim\Lg_i=13$, $i=0,2,8$, and $\dim\Lg_i=14$, $i=1,3,5,7,9$. This implies that $\on{dim}\Lg_i\cap E=1$ when $i=1,2,3,7,8,9$, and $\on{dim}\Lg_i\cap E=0$ otherwise.  Thus $\dim\Lh_i=1$, when $i=0,1,2,3,5,7,8,9$, and $\dim\Lh_6=0$.

We have $(c_\alpha c_{\theta_b\alpha})^5=1$ and $c_\beta^{10}=1$. Applying Lemma~\ref{lem-root spaces}, we see that  $c_\alpha c_{\theta_b\alpha}=\zeta_{10}^4$ or $\zeta_{10}^6$, $c_\beta=\zeta_{10}$ or $\zeta_{10}^{-1}$. Now since $\alpha+\beta=-\theta\alpha$, applying $\theta$ to $[X_\alpha,X_\beta]=n_{\alpha,\beta}X_{-\theta\alpha}$, we see that $c_\alpha c_{\theta_b\alpha}c_\beta\in\mathbb{Q}$. Thus we conclude that either $c_\alpha c_{\theta_b\alpha}=\zeta_{10}^4$ and $c_\beta=\zeta_{10}$, or $c_\alpha c_{\theta_b\alpha}=\zeta_{10}^6$ and $c_\beta=\zeta_{10}^{-1}$. This allows us to determine $\theta_b|_{\Lh}$ as desired.

For  $\theta_a$, we have $\dim\Lg_i=12$, $i=2,8$, $\dim\Lg_i=13$, $i=0,4,6$, and $\dim\Lg_i=14$, $i=1,3,5,7,9$. This implies that $\on{dim}\Lg_i\cap E=1$ when $i=1,3,4,6,7,9$, and $\on{dim}\Lg_i\cap E=0$ otherwise. Thus $\on{dim}\Lh_i=1$ when $i=0,1,3,4,5,6,7,9$, and $\on{dim}\Lh_2=0$. Apply similar arguments as in the case of $10_b$, we conclude that either $c_\alpha c_{\theta_b\alpha}=\zeta_{10}^3$ and $c_\beta=\zeta_{10}^2$, or $c_\alpha c_{\theta_b\alpha}=\zeta_{10}^{-3}$ and $c_\beta=\zeta_{10}^{-2}$. This allows us to determine $\theta_a|_{\Lh}$ as desired.

More explicitly, we define an order 10 outer automorphism of $H=SL_3$ as follows: 
$$\theta_\gamma:H\to H,\,g\mapsto J^{-1}(g^{t})^{-1}J,\,J=\left(\begin{matrix}&&1\\&1&\\\gamma&&\end{matrix}\right)$$
where $\gamma^5=1$. When $\gamma=\zeta_{10}^2$ (resp. $\zeta_{10}^4$) we obtain $\theta_a|_{H}$ (resp. $\theta_b|_{H}$).

We conclude that $\theta_a|_{H}$ affords no cuspidal character sheaves while $\theta_b|_{H}$ afford a bi-orbital character sheaf supported on the whole $\Lh_1$.
\end{exam}

\begin{exam} The grading $10_b$ of $^2E_6$.  
\normalfont 
The grading $10_b$ of $^2E_6$ can be obtained as restriction of the grading $10_b$ of $E_7$. We continue using the notations in Example~\ref{example-e7}. One can check that $\Lg_{\{\alpha_i\}}=\Lg_{\{-\alpha_i\}}$, $i=1,4,5$, and 
\bern
&&\Lg_{\{\alpha_1\}}\oplus \Lg_{\{\alpha_4\}}\oplus\Lg_{\{\alpha_5\}}\oplus\Lg_{\{\alpha_6\}}\oplus\Lg_{\{-\alpha_6\}}\oplus\Lg_{\{\alpha_2+\alpha_3+\alpha_4\}}\oplus\Lg_{\{-(\alpha_2+\alpha_3+\alpha_4)\}}\oplus\Lg_{\pm\alpha_7}\\&&\oplus \on{span}\{h_{\alpha_1},h_{\alpha_4},h_{\alpha_2+\alpha_3+\alpha_4},h_{\alpha_5},h_{\alpha_6},h_{\alpha_7}\}
\eern form a ($\theta_b$-stable) subalgebra of type $E_6$ with simple roots $\beta_1=\alpha_1$, $\beta_2=\alpha_4$, $\beta_3=\alpha_2+\alpha_3+\alpha_4,\beta_i=\alpha_{i+1},\,i=4,5,6$.

Let us write $\bar\Lg$ for this subalgebra. Then $\bar\fc=\on{span}\{(1-\zeta_{10})h_{\beta_1}+(1-\zeta_{10}^2)h_{\beta_2}+(2-\zeta_{10}-\zeta_{10}^{-1})h_{\beta_3}+(3-\zeta_{10}-\zeta_{10}^{-1})h_{\beta_4}+2h_{\beta_5}+h_{\beta_6}\}$ is a Cartan subspace of $\bar\Lg_1$. We have
\beqn
Z_{\bar\Lg}(\bar\fc)_{\on{der}}=\on{span}\{h_\beta, X_{\pm\beta}\}\eeqn
where $\beta=\alpha_7=\beta_6$. From Example~\ref{example-e7}, we see that $\theta(h_\beta)=h_\beta$, $\theta(X_\beta)=c_\beta X_\beta$, where $c_\beta=\zeta_{10}$ or $\zeta_{10}^{-1}$. Thus $\theta|_{Z_{\bar\Lg}(\bar\fc)_{\on{der}}}$ is as desired.

\end{exam}

\begin{exam}The grading $12_e$ of $E_8$. 

\normalfont We label the simple roots as follows
\beqn
\xymatrix{{\substack{\alpha_1\\\circ}}\ar@{-}[r]&{\substack{\alpha_3\\\circ}}\ar@{-}[r]&{\substack{\alpha_4\\\circ}}\ar@{-}[r]\ar@{-}[d]&{\substack{\alpha_5\\\circ}}\ar@{-}[r]&{\substack{\alpha_6\\\circ}}\ar@{-}[r]&{\substack{\alpha_7\\\circ}}\ar@{-}[r]&{\substack{\alpha_8\\\circ}}\\&&{\substack{\circ\\\alpha_2}}&&}
\eeqn

Let $w=s_1s_4s_6s_2s_3s_5$ (type $E_6$), where $s_i=s_{\alpha_i}$. We have $\dim\Lt_i=1$, $i=1,4,5,7,8,11$, $\dim\Lt_0=2$ and $\dim\Lt_i$=0 otherwise. Moreover,
$$\fc=\Lt_1=\on{span}\{(1,\zeta_{12}^2-\zeta_{12}-1,1+\zeta_{12}^{-1},1+\zeta_{12}^{-1}+\zeta_{12},1+\zeta_{12}^{-1},1,0,0\}$$
and
\bern
&&Z_\Lg(\fc)\cong \Lt\oplus\on{span}\{X_{\pm\beta_i},\,X_{\pm\theta\beta_i},\,X_{\pm\theta^2\beta_i},\,X_{\pm\gamma_i},\,i=1,2,3\}
\eern
where 
\bern
&&\beta_1=12343321,\ \theta\beta_1=12244321,\ \theta^2\beta_1=22343221,\\ &&\beta_2=11221111,\ \theta\beta_2=01122211,\ \theta^2\beta_2=11122111, \\&&\beta_3=11122110,\ \theta\beta_3=11221110,\ \theta^2\beta_3=01122210,\\
 &&\gamma_1=23465431,\ \gamma_2=23465432,\ \gamma_3=00000001\\
 &&\theta^3\beta_i=\beta_i,\,i=1,2,3,\ \theta\gamma_i=\gamma_i\,i=1,2,3.
 \eern
Thus $\fh:=Z_\Lg(\fc)_{\on{der}}$ is of type $D_4$, with simple roots $\beta_3,\theta\beta_3,\theta^2\beta_3$, $\gamma_3$ (center).

It follows that there are $18$ orbits of roots of size $12$ under the action of $w$. 

We have $\dim\Lg_6=18$, $\dim\Lg_i=19$, $i=2,10$, $\dim\Lg_0=22$, $\dim\Lg_i=21$, $i=4,5,7,8$, $\dim\Lg_i=20$, $i=3,9$ and $\dim\Lg_i=23$, $i=1,11$. 

Let $E=\on{span}\{X_{\pm\beta_i},\,X_{\pm\theta\beta_i},\,X_{\pm\theta^2\beta_i},\,X_{\pm\gamma_i}$. Then it follows that
\bern
&&\dim E\cap\Lg_i=2,\,i=0,3,4,5,7,8,9,\,\dim E\cap\Lg_i=1,\,i=2,10,\,\dim E\cap\Lg_i=4,\,i=1,11,\\
&&\text{and $\dim E\cap\Lg_i=0$ otherwise.}
\eern
Note that $h_{\gamma_3}, h_{\beta_3}+h_{\theta\beta_3}+h_{\theta^2\beta_3}\in\Lh_0$, $h_{\beta_3}+\zeta_{12}^4h_{\theta\beta_3}+\zeta_{12}^8h_{\theta^2\beta_3}\in\Lh_4$, and $h_{\beta_3}+\zeta_{12}^8h_{\theta\beta_3}+\zeta_{12}^4h_{\theta^2\beta_3}\in\Lh_8$. 

Using similar argument as before, we conclude that, without loss of generality, $c_{\gamma_1}=c_{\gamma_3}=\zeta_{12}$, $c_{\gamma_2}=\zeta_{12}^2$, $c_{\beta_i}c_{\theta\beta_i}c_{\theta^2\beta_i}=\zeta_{12}^3$, $i=1,2$, and $c_{\beta_3}c_{\theta\beta_3}c_{\theta^2\beta_3}=1$. We can then conclude that $\theta|_{\Lh}$ has the desired Kac diagram. Moreover, $\theta|_{\Lh}$ affords a biorbital cuspidal character sheaf supported on the whole $\Lh_1$.

\end{exam}

\subsection{Grading \texorpdfstring{$2_a$}{2a} of type \texorpdfstring{$E_6$}{E6}}
Let $\theta$ be an automorphism of $E_6$ that induces the grading $2_a$.
\begin{prop}\label{prop-2ae6}
Suppose that $\fc_t\subset \fc$ is a stratum such that $\dim f(\fc_t)=2$. We have the following cases
\begin{enumerate}
\item $Z_\Lg(\fc_t)_{\on{der}}$ is of type $A_2\times A_2$ and $\theta|_{Z_\Lg(\fc_t)_{\on{der}}}$ permutes the two factors of $A_2$.
\item  $Z_\Lg(\fc_t)_{\on{der}}$ is of type $A_1\times A_1\times A_1$, and $\theta|_{Z_\Lg(\fc_t)_{\on{der}}}$ permutes the first two factors of $A_1$ and restricts to the third factor as $\theta_2$.
\item $Z_\Lg(\fc_t)_{\on{der}}$ is of type $A_3$, and $\theta|_{Z_\Lg(\fc_t)_{\on{der}}}$ is the inner automorphism such that $Z_\Lg(\fc_t)_{\on{der}}^\theta$ is of type $A_1\times A_1$.
\item $Z_\Lg(\fc_t)_{\on{der}}$ is of type $A_2$, and $\theta|_{Z_\Lg(\fc_t)_{\on{der}}}$ is the outer automorphism such that $Z_\Lg(\fc_t)_{\on{der}}^\theta\cong\mathfrak{so}_3$.
\end{enumerate}
\end{prop}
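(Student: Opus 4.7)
The plan is to mirror the computational approach of \autoref{prop-centralisers} and \autoref{example-e7}. First I would realize $\theta$ as $\on{Int}(n_w)$ for an explicit involution $w\in W_G$ (taken from \cite[Tables 30--31]{RLYG}), and compute the Cartan subspace $\fc=\Lt_1$, the $(-1)$-eigenspace of $w$ on $\Lt$. One checks that $\dim\fc=4$ and that $\fc^*$ carries a restricted root system of type $F_4$: long restricted roots have multiplicity $1$ and arise from single $E_6$-roots $\alpha$ satisfying $w\alpha=-\alpha$, while short restricted roots have multiplicity $2$, each arising from a pair $\{\alpha,-w\alpha\}$ with $w\alpha\neq -\alpha$.

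Next, strata $\fc_t\subset\fc$ with $\dim f(\fc_t)=2$ are exactly the generic loci of the common zero sets of rank-$2$ closed subsystems of this restricted $F_4$-system, modulo the restricted Weyl group $W(\fc)$. The four $W(F_4)$-orbits of such subsystems---the long $A_2$, the short $A_2$, $B_2$, and the orthogonal long-short $A_1\times A_1$---give precisely the four cases of the proposition. For each, I compute $Z_\Lg(\fc_t)_{\on{der}}$ by listing the $E_6$-roots whose restrictions to $\fc$ vanish on $\fc_t$; the multiplicity count then delivers root subsystems of type $A_2$, $A_2\times A_2$, $A_3$ and $A_1\times A_1\times A_1$ respectively, matching the four stated cases.

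Finally, the restriction $\theta|_{Z_\Lg(\fc_t)_{\on{der}}}$ is read off from the $w$-action on roots, combined with the structure-constant analysis $\theta(X_\alpha)=c_\alpha X_{w\alpha}$ used in \autoref{example-e7}. In the long-root cases (where $w\alpha=-\alpha$), $\theta$ sends $\Lg_\alpha$ to $\Lg_{-\alpha}$, hence induces the outer Chevalley involution with fixed subalgebra $\mathfrak{so}_3$ on the long $A_2$ (case (4)) and on the long $A_1$-factor appearing in case (2) (realising $\theta_2$). In the short-root cases, $w$ swaps the two $E_6$-roots above each short restricted root, so $\theta$ permutes the corresponding pair of isomorphic factors, yielding case (1) and the first two $A_1$-factors of case (2). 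The main obstacle will be case (3), where $B_2$ restricted produces an $A_3$ centraliser: here both mechanisms coexist, and one must show that the resulting involution of $A_3$ is the specific inner involution with fixed subalgebra $A_1\times A_1$ rather than, say, $\mathfrak{sl}_2\oplus\C$. This requires a careful sign analysis of the constants $c_\alpha$ on both the $w$-fixed and $w$-swapped root pairs, in the spirit of the computation of $c_\alpha c_{\theta\alpha}$ in \autoref{example-e7}.
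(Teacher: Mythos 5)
Your plan matches the paper's proof almost exactly: realize $\theta=\on{Int}n_w$ for an explicit involution $w$ (the paper takes $w=w_h^6$ with $w_h$ a Coxeter element), compute $\fc=\Lt_1$ and the restricted root system $F_4$ with long restricted roots coming from real roots (multiplicity $1$) and short ones from complex root pairs (multiplicity $2$), and then match the four $W(F_4)$-classes of rank-$2$ Levi subgroups of $F_4$ (long $A_2$, short $\tilde A_2$, $B_2$, $A_1\times\tilde A_1$) with the four cases by lifting back to $E_6$-roots. One small simplification you could make: in the $B_2\Rightarrow A_3$ case that you flag as the main obstacle, you don't actually need a delicate sign analysis of the constants $c_\alpha$. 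Since $\theta|_{\Lh}$ is an order-$2$ automorphism of $\mathfrak{sl}_4$, it is determined up to conjugacy by $\dim\Lh_0$; the paper computes $\dim\Lh_0=7$, $\dim\Lh_1=8$, which singles out the quasi-split (inner) involution with $\Lh^\theta$ having semisimple part $A_1\times A_1$ (ruling out $\mathfrak{so}_4$, $\mathfrak{sp}_4$, and $\mathfrak{s}(\mathfrak{gl}_1\times\mathfrak{gl}_3)$). So a dimension count along the lines of what you already set up suffices, and no sign chase is required.
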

\begin{coro}Let $\cF$ be a cuspidal character sheaf on $\Lg_1$ and $\cS$ the supporting stratum of $\cF$. Let $x_s+x_n\in\cS$. Then $Z_\Lg(x_s)_{\on{der}}:=\fh_{x_s}$ is of type $A_2\times A_2$ and $\theta|_{Z_\Lg(x_s)_{\on{der}}}$ permutes the two factors of $A_2$. Moreover, $x_n\in (\fh_{x_s})_1^{\on{nil}}$ lies in the open dense $H_0$-orbit, where $H_0=(Z_G(x_s)_{\on{der}})^\theta$.
\end{coro}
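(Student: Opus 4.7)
The proof combines~\autoref{prop-2ae6} with the general description of supports of cuspidal character sheaves recalled in~\autoref{ssec:strategy}. The stratification of $\fg_1$ yields $\cS = G_0\cdot(\fc_s + n)$ for some $\fc$-stratum $\fc_s\subseteq \fc$ and some $n\in Z_\fg(\fc_s)_{\on{der}}\cap \fg_1^{\on{nil}}$. The cuspidality of $\cF$ forces $\dim \fc_s = \dim f(\cS) = \dim\fg_1 - \dim\fg_0 = 2$ for the grading $2_a$. After $G_0$-conjugating so that $x_s\in \fc_s$, we have $Z_\fg(x_s) = Z_\fg(\fc_s)$, so~\autoref{prop-2ae6} applies and identifies $\fh_{x_s}$ together with $\theta|_{\fh_{x_s}}$ as one of the four cases (1)--(4) listed there.

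Next, the characterisation~\eqref{eqn-levi} of cuspidal character sheaves requires that $\theta|_{\fh_{x_s}}$ afford a bi-orbital cuspidal character sheaf, hence in particular that $\dim\fh_0 = \dim\fh_1$ by the general principle recalled in~\autoref{ssec:strategy}. I rule out cases (2)--(4) by direct computation. In case (2) the swapped pair of $\mathrm{A}_1$ factors contributes $(3,3)$ to $(\dim\fh_0,\dim\fh_1)$ while the third factor under $\theta_2 = \Ad(\on{diag}(\zeta_4,\zeta_4^{-1}))$ contributes $(1,2)$, giving $(4,5)$. In case (3), the inner involution of $\fsl_4$ with fixed subalgebra $\mathfrak{s}(\fgl_2\oplus \fgl_2)$ gives $(7,8)$. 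In case (4), the outer involution of $\fsl_3$ with fixed subalgebra $\fso_3$ gives $(3,5)$. The equality $\dim\fh_0 = \dim\fh_1$ fails in each, so only case (1) survives, proving the first assertion.

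For the second assertion, in case (1) the projection onto the first factor induces an $H_0$-equivariant isomorphism $\fh_1\xrightarrow{\sim}\fsl_3$ intertwining the action of $H_0\cong\SL_3$ with the standard adjoint action. Under this isomorphism, bi-orbital cuspidal character sheaves on $\fh_1$ correspond to Lusztig's cuspidal character sheaves on the ungraded $\fsl_3$~\cite{L84}, supported on the unique regular nilpotent orbit. Since this orbit is open and dense in $\fh_1^{\on{nil}}$, and since the description $\cS = G_0\cdot(Z(\fl)_1^\circ + x_n)$ recalled in~\autoref{sec:introduction} forces $H_0\cdot x_n$ to be the supporting stratum of such a bi-orbital cuspidal, we conclude that $x_n$ lies in the open dense $H_0$-orbit of $\fh_1^{\on{nil}}$.

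The main obstacle is the explicit case-by-case dimension analysis in the second paragraph, which requires a careful reading of the automorphisms $\theta|_{Z_\fg(\fc_s)_{\on{der}}}$ furnished by~\autoref{prop-2ae6}; apart from this bookkeeping the argument is structural and follows formally from the reduction described in~\autoref{sec:overview}.
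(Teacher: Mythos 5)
Your proposal is correct and follows essentially the same route as the paper: combine \autoref{prop-2ae6} with the criterion that $\theta|_{\fh_{x_s}}$ must afford a bi-orbital cuspidal, hence $\dim\fh_0=\dim\fh_1$, and then observe that only the $A_2\times A_2$ case passes this test. The paper's proof of the corollary is just the one-line assertion that only case (1) affords a bi-orbital cuspidal, but the dimension counts $(3,5)$, $(8,8)$, $(4,5)$, $(7,8)$ that justify it appear verbatim in the enumerated list directly preceding the corollary, so your second paragraph is simply making that reasoning explicit; your computations match the paper's. Your identification of $\fh_1\cong\fsl_3$ with the diagonal adjoint action and the reduction to Lusztig's regular-orbit cuspidal on $\fsl_3$ is likewise the paper's (implicit) argument for the second assertion.
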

\begin{proof}
Among the cases in Proposition~\ref{prop-2ae6}, only in (1) $\theta|_{Z_\Lg(\fc_t)_{\on{der}}}$ affords a bi-orbital cuspidal character sheaf, on the open dense nilpotent $(Z_G(\fc_t)_{\on{der}})^\theta$-orbit in $(Z_\Lg(\fc_t)_{\on{der}})_1$.
\end{proof}

We label the simple roots as follows
\beqn
\xymatrix{{\substack{\alpha_1\\\circ}}\ar@{-}[r]&{\substack{\alpha_3\\\circ}}\ar@{-}[r]&{\substack{\alpha_4\\\circ}}\ar@{-}[r]\ar@{-}[d]&{\substack{\alpha_5\\\circ}}\ar@{-}[r]&{\substack{\alpha_6\\\circ}}\\&&{\substack{\circ\\\alpha_2}}&&}.
\eeqn
Let \beqn
w_h=s_{\alpha_1}s_{\alpha_4}s_{\alpha_6}s_{\alpha_3}s_{\alpha_5}s_{\alpha_2} \text{ (a Coxeter element in $W(E_6)$)}.
\eeqn
As before we fix $\ft\supset\fc$. The involution $2_a$ can be realised as $\theta=\on{Int}n_w$, where $w=w_h^6$. We have
\bern
w\alpha_1=-\alpha_6,w\alpha_2=-\alpha_2,w\alpha_3=-\alpha_5,w\alpha_4=-\alpha_4, w\alpha_5=-\alpha_3,w\alpha_6=-\alpha_1.
\eern
Moreover,
\beqn
\fc=\on{span}\{\bar h_1:=h_{\alpha_1}+h_{\alpha_6},\bar h_2:=h_{\alpha_3}+h_{\alpha_5},\bar h_3:=h_{\alpha_4},\bar h_4:=h_{\alpha_2}\}
\eeqn
and  the little Weyl group
\beqn
W\cong\langle t_1:=s_{\alpha_1}s_{\alpha_6},t_2=s_{\alpha_3}s_{\alpha_5},t_3:=s_{\alpha_4},t_4:=s_{\alpha_2}\rangle\cong W(F_4).
\eeqn
Recall that  the restricted roots, i.e, $\bar\alpha:=\alpha|_{\fc}$, form a root system of type $F_4$. Moreover, the real roots $\alpha$, i.e, those with $\theta\alpha=-\alpha$, restricts to the long roots of  $F_4$, and the complex roots $\alpha$, i.e, those with $\theta\alpha\neq\pm\alpha$, restricts to the short roots of $F_4$ with $\bar\alpha=-\overline{\theta\alpha}$.

Let us write $\fc_{t_i,t_j}\subset\fc$ for the subspace fixed by $t_i,t_j$.

Suppose that $\fc_t\subset \fc$ is a stratum such that $\dim f(\fc_t)=2$. Let us write $\bar \Lg=F_4$ and $\Lh=Z_\Lg(\fc_t)_{\on{der}}$. Viewing $\fc$ as a Cartan subalgebra of $\bar\Lg$, we have the following possibilities:
\begin{enumerate}
\item $Z_{\bar \Lg}(\fc_t)$ is a Levi subgroup of type $A_2$ (long roots). Then $Z_\Lg(\fc_t)$ is conjugate to $$Z_{\Lg}(\fc_{t_3,t_4})=\Lt\oplus\on{span}\{X_{\pm\alpha_2+\alpha_4},X_{\pm\alpha_2},X_{\pm\alpha_4}\}\text{ (type $A_2$)}$$ with $\dim\Lh_0=3,\dim\Lh_1=5$. In this case $\theta|_\Lh$ is the involution with $\Lh^\theta\cong\mathfrak{so}(3)$.
\item $Z_{\bar \Lg}(\fc_t)$ is a Levi subgroup of type $\tilde A_2$ (short roots). Then $Z_\Lg(\fc_t)$ is conjugate to $$Z_{\Lg}(\fc_{t_1,t_2})=\Lt\oplus\on{span}\{X_{\pm\alpha_5+\alpha_6},X_{\pm\alpha_1+\alpha_3},X_{\pm\alpha_1},X_{\pm\alpha_3},X_{\pm\alpha_5},X_{\pm\alpha_6}\}\text{
(type $A_2\times A_2$)},$$  $\theta$ permutes the two factors, $\dim\Lh_0=\dim\Lh_1=8$.
\item $Z_{\bar \Lg}(\fc_t)$ is a Levi subgroup of type $A_1\times\tilde A_1$. Then $Z_\Lg(\fc_t)$ is conjugate to $$Z_{\Lg}(\fc_{t_2,t_4})=\Lt\oplus\on{span}\{X_{\pm\alpha_2},X_{\pm\alpha_3},X_{\pm\alpha_5}\} \text{ (type $A_1\times A_1\times A_1$)},$$ $\theta$ permutes the last two factors and leaves the first one stable, $\dim\Lh_0=4,\dim\Lh_1=5$. In this case $\theta$ restricts to the first factor of $A_1$ as $\theta_2$.
\item $Z_{\bar \Lg}(\fc_t)$ is a Levi subgroup of type $B_2$. Then $Z_\Lg(\fc_t)$ is conjugate to $$Z_{\Lg}(\fc_{t_2,t_3})=\Lt\oplus\on{span}\{X_{\pm\alpha_4+\alpha_5},X_{\pm\alpha_3+\alpha_4+\alpha_5},X_{\pm\alpha_3+\alpha_4},X_{\pm\alpha_3},X_{\pm\alpha_4},X_{\pm\alpha_5}\}\text{ (type $A_3$)}$$ 
 with $\dim\Lh_0=7,\dim\Lh_1=8$. In this case $\theta|_{\Lh}$ gives rise to the quasi-split symmetric pair for $SL_4$.

\end{enumerate}
Thus Proposition~\ref{prop-2ae6} follows.

\begin{coro}\label{cor:E6}
We have $N_{G_0}(\fc_{t_1,t_2})/Z_{G_0}(\fc_{t_1,t_2})\cong W(G_2)$, the Weyl group of type $G_2$. Moreover,~\autoref{theo:cuspidal-cartan} holds in this case.
\end{coro}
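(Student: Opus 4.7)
The plan is to identify $W' := N_{G_0}(\fc_{t_1,t_2})/Z_{G_0}(\fc_{t_1,t_2})$ with the relative Weyl group of $\fc_{t_1,t_2}$ inside the little Weyl group $W := W(G_0,\fc) \cong W(F_4)$, and then to identify this relative Weyl group as $W(G_2)$ using the structure of the $F_4$-root system. The identification $W' \cong N_W(\fc_{t_1,t_2})/C_W(\fc_{t_1,t_2})$ follows from the standard lifting of little Weyl group elements through $N_{G_0}(\fc)$, using that $Z_{G_0}(\fc)\subseteq Z_{G_0}(\fc_{t_1,t_2})$. Steinberg's theorem on parabolic stabilisers in real reflection groups identifies $C_W(\fc_{t_1,t_2}) = \langle t_1, t_2\rangle \cong W(\tilde A_2)$, the short-root $A_2$-parabolic of $W(F_4)$, so that $N_W(\fc_{t_1,t_2}) = N_W(\langle t_1,t_2\rangle)$.

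To compute $N_W(\langle t_1, t_2\rangle)/\langle t_1, t_2\rangle$, the key structural fact is that $F_4$ contains a sub-root system of type $A_2 \times \tilde A_2$ with the long $A_2$-factor orthogonal to the short $\tilde A_2$ generated by $\langle t_1, t_2\rangle$. The Weyl group of this orthogonal long $A_2$ commutes with $\langle t_1, t_2\rangle$ and acts faithfully by reflections on the $2$-dimensional subspace $\fc_{t_1,t_2}$, yielding a copy of $W(A_2)\cong S_3$ in the quotient. The longest element $w_0 \in W(F_4)$ acts as $-1$ on $\fc$, hence as $-1$ on $\fc_{t_1,t_2}$, and is not in the image of this $S_3$ since $-1\notin W(A_2)$ on a $2$-plane. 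The subgroup generated is the dihedral group of order $12$, which acts as $W(G_2)$ on a $2$-dimensional space. To confirm that no further elements appear in the quotient, I would compute the restricted root system $\{\alpha|_{\fc_{t_1,t_2}} : \alpha\in \Phi(F_4)\setminus \Phi(\tilde A_2)\}$: the $6$ long $F_4$-roots from the orthogonal $A_2$ restrict as the short $G_2$-roots, and the remaining $F_4$-roots restrict with multiplicity onto the $6$ long $G_2$-roots, giving a full $G_2$-system on $\fc_{t_1,t_2}$. By the Howlett--Steinberg theory, $N_W(\langle t_1,t_2\rangle)/\langle t_1,t_2\rangle$ is then exactly the reflection group of this restricted system, namely $W(G_2)$.

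Given the Weyl group identification, \autoref{theo:cuspidal-cartan} in this case follows: part~(1) (uniqueness of $L$ up to $G_0$-conjugacy) is \autoref{prop-2ae6} together with the fact that among the cases listed there, only case~(2) affords a bi-orbital cuspidal (as recorded in \autoref{thm:bisup-excep}); part~(2) uses that $W$ acts transitively on the set of $\fc$-strata of the conjugacy type corresponding to $L$, and that $\dim\fc_{t_1,t_2} = 2 = r = \dim\fg_1-\dim\fg_0$; part~(3) is immediate since $W(G_2)$ is a complex reflection group; part~(4) (the closed immersion $\fc_{t_1,t_2}/\!/W(G_2)\hookrightarrow \fg_1/\!/G_0$) factors through the standard closed immersion $\fc_{t_1,t_2}/\!/W(G_2)\hookrightarrow\fc/\!/W(F_4)\cong\fg_1/\!/G_0$ for parabolic quotients of reflection groups. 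The main technical input is the sub-root-system identification $A_2\times\tilde A_2\subseteq F_4$ and the verification that the restricted roots form $G_2$; once these are in place the remainder of the argument is formal.
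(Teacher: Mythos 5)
The paper's proof is a one-liner: it notes that the little Weyl group is $W(F_4)$, that $Z_K(\fc_{t_1,t_2})$ (or rather $Z_{\bar\fg}(\fc_{t_1,t_2})$) is of type $A_2$ consisting of short roots, and then cites a table in~\cite{ABDR18} for the relative Weyl group. Your argument reaches the same conclusion by a genuinely different, self-contained route: you realise $N_{G_0}(\fc_{t_1,t_2})/Z_{G_0}(\fc_{t_1,t_2})$ as $N_W(W_J)/W_J$ for the short-$A_2$ parabolic $W_J = \langle t_1,t_2\rangle$ of $W(F_4)$ (using the standard lifting argument together with conjugacy of Cartan subspaces of $Z_\fg(\fc_{t_1,t_2})_1$ under $Z_{G_0}(\fc_{t_1,t_2})^\circ$), produce a dihedral group of order $12$ inside the quotient from the orthogonal long $A_2$ in the Borel--de~Siebenthal subsystem $A_2\times\tilde A_2$ together with $w_0 = -1$, and then bound from above via the restricted root system. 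What the paper's approach buys is brevity and a clean delegation to a reference; what yours buys is transparency about where the $W(G_2)$ actually comes from. Your treatment of the ``Moreover'' clause is also more explicit than the paper's, which leaves the verification of \autoref{theo:cuspidal-cartan} to the surrounding discussion in~\S\,8.2 (the explicit $\fsl_2$-triple $\phi$, identification of $(G^\phi)^0$ with $G_2$, and uniqueness of the supporting nilpotent orbit).

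A few points you would need to tighten for a complete proof. First, you must say why the $\tilde A_2$-factor of the Borel--de~Siebenthal subsystem $A_2\times\tilde A_2\subset F_4$ may be taken to coincide with the standard parabolic $\langle t_1,t_2\rangle$; this follows from $W(F_4)$-conjugacy of all short-$A_2$ subsystems (equivalently, of all rank-$2$ parabolics of short type), but should be recorded. Second, the appeal to ``Howlett--Steinberg theory'' for the upper bound is the weakest link: in general the identification of $N_W(W_J)/W_J$ with the reflection group of the restricted roots is not a formality and is usually verified case-by-case (indeed this is precisely what~\cite{ABDR18} tabulates). A cleaner way to close the gap once you have the $D_{12}$: since $W$ acts orthogonally on a real form of $\fc$ and $\fc_{t_1,t_2}$ is a real $2$-plane, the quotient embeds in $\operatorname{O}_2(\R)$, hence is dihedral; its order is $|N_W(W_J)|/6$, and one checks directly (e.g.\ by counting the $16$ conjugates of $W_J$, or by the multiplicity count $48 = 6 + 6 + 36$ you sketch with the $36$ remaining roots restricting with multiplicity $6$) that this order is $12$, not $24$.
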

\begin{proof}
The Weyl group of $(G_0,\Lg_1)$ is $W(F_4)$, the Weyl group of type $F_4$. Note that $Z_K(\fc_{t_1,t_2})$ is of type $A_2$. The corollary follows from~\cite[Table 2]{ABDR18}.
\end{proof}

\begin{rema}
We note that in case (2), we have $$Z(H)^\theta=Z(G)=\langle\check\alpha_1(\zeta_3)\check\alpha_3(\zeta_3^2)\check\alpha_5(\zeta_3)\check\alpha_6(\zeta_3^2)\rangle\cong\mu_3$$ and it acts non-trivially on the biorbital cuspidal character sheaves.
\end{rema}

Let $\Lh=Z_{\Lg}(\fc_{t_1,t_2})_{\on{der}}$. Let $x_0=X_{\alpha_1}-c_{\alpha_1}X_{-\alpha_6}+X_{\alpha_3}-c_{\alpha_3}X_{-\alpha_5}\in\Lh_1^{\on{nil}}$. Then $H_0.x_0$ is open dense in $\Lh_1^{\on{nil}}$. We see that
\beqn
\fc_{t_1,t_2}+x_0\subset Z_{\Lg_1}(x_0).
\eeqn
Let $\cS$ be the supporting stratum of a cuspidal character sheaf $\cF$. Then we conclude that
\beqn
\cS=\widecheck\cO_{x_0}.
\eeqn

Let $y_0=2(X_{-\alpha_1}-c_{-\alpha_1}X_{\alpha_6}+X_{-\alpha_3}-c_{-\alpha_5}X_{\alpha_5})\in\Lh_1$.
We have $[x_0,y_0]=2(h_{{\alpha}_1}-h_{\alpha_6}+h_{\alpha_3}-h_{\alpha_5}):=h_0$, and $\phi=(x_0,y_0,h_0)$ is a normal $\mathfrak{sl}_2$-triple. 

Let $\Lg^\phi=Z_\Lg(h_0)\cap Z_\Lg(x_0)$. We can check that $\Lg^\phi$ is of type $G_2$ by showing that $h_0$ is $G$-conjugate to $4h_1+4h_2+6h_3+8h_4+6h_5+4h_6$,  where $h_i=h_{\alpha_i}$, so that the weighted Dynkin diagram is $\begin{array}{ccccc}2&0&0&0&2\\&&0&&\end{array}$ (see, for example, tables in~\cite[\S13.1]{C93}). Since $\fc_{t_1,t_2}\subset\Lg^\phi$ has dimension 2, we conclude that $\theta|_{\Lg^\phi}$ is the GIT stable $\bZ/2\bZ$ grading of $G_2$ (cf~\cite[Table IX]{D}). 
The fact that $\cO_{x_0}$ is the unique nilpotent orbit such that $\Lg^\phi\cong G_2$ follows from loc.cit.

\subsection{Grading \texorpdfstring{$3_a$}{3a} of type \texorpdfstring{$E_7$}{E7}}Let $\theta$ be an automorphism of $E_7$ that induces the grading $3_a$.

\begin{prop}
Suppose that $\fc_t\subset \fc$ is a stratum such that $\dim f(\fc_t)=2$. We have the following cases
\begin{enumerate}

\item  $Z_\Lg(\fc_t)_{\on{der}}$ is of type $A_1\times A_1\times A_1$, and $\theta|_{Z_\Lg(\fc_t)_{\on{der}}}$ permutes the three factors of $A_1$.
\item $Z_\Lg(\fc_t)_{\on{der}}$ is of type $A_2$, and $\theta|_{Z_\Lg(\fc_t)_{\on{der}}}$ is the GIT stable grading of order $3$.
\end{enumerate}
\end{prop}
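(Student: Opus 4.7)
The plan is to follow the strategy of the proof of~\autoref{prop-2ae6}. First, realize $\theta$ as $\theta = \on{Int}(n_w)$ for a suitable element $w \in W(E_7)$ of order $3$, following~\cite[Tables 30--31]{RLYG}. Using computations with the Chevalley basis as in~\autoref{example-e7}, one determines the action of $w$ on $\ft$ and on $\Phi$ explicitly, obtaining the Cartan subspace $\fc = \ft_1$ together with the full $w$-orbit decomposition of the roots.

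Next, identify the little Weyl group $W := N_{G_0}(\fc)/Z_{G_0}(\fc)$ and the associated restricted root structure on $\fc$. The $\fc$-strata $\fc_t$ with $\dim f(\fc_t) = 2$ are in bijection with $W$-conjugacy classes of rank-two parabolic subgroups of $W$. For each such class, pick a representative $\fc_t$ and compute $Z_\Lg(\fc_t)$ by determining which roots of $\Lg$ vanish on $\fc_t$; applying~\autoref{lem-root spaces} then yields the root system of $\fh := Z_\Lg(\fc_t)_{\on{der}}$. I expect this enumeration to produce exactly two classes, with $\fh$ of type $A_1^3$ or $A_2$ respectively, as stated.

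It remains to determine $\theta|_\fh$ in each case. Since $\on{Out}(E_7) = 1$, $\theta|_\fh$ is an inner automorphism of $\fh$ of order dividing $3$. In case (1), the $w$-orbit decomposition of the six roots contributing to $\fh$ should show that the three $A_1$ subsystems form a single $w$-orbit of length $3$, whence $\theta$ permutes them cyclically; the a priori alternative (three $w$-stable $A_1$'s, each acted upon by an order-$3$ inner automorphism) is to be excluded by verifying that the resulting $\fh_0$ is the diagonal $\fsl_2$ rather than a $3$-dimensional abelian subalgebra, via a direct computation from the $w$-action on root vectors. In case (2), the structure constants $c_\alpha$ are pinned down, as in~\autoref{example-e7}, by comparing $\dim \fh_i$ with $\dim \Lg_i$ read off from the Kac diagram of the grading $3_a$. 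Among the three Kac diagrams for $A_2^{(1)}$ of coordinate sum $3$, only the principal diagram $(1,1,1)$, which gives the GIT-stable order-$3$ grading of $\fsl_3$, is compatible with the restricted dimensions.

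The main obstacle will be the combinatorial bookkeeping of the $w$-orbits on $\Phi$ and of the structure constants $c_\alpha$, together with verifying that the enumeration of rank-two $W$-orbits of $\fc$-strata yields exactly the two classes listed and no extraneous ones; the analysis is entirely parallel to the template set by~\autoref{example-e7} and~\autoref{prop-2ae6}.
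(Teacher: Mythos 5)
Your plan follows the paper's proof, which mirrors \autoref{example-e7} and \autoref{prop-2ae6}: realize $\theta=\on{Int}n_w$ for $w$ of type $3A_2$, compute $\fc=\ft_1$ and the $w$-orbit decomposition of roots, enumerate $\fc$-strata of dimension $2$ via conjugacy classes of reflecting hyperplanes of $W\cong G_{26}$, compute the two centralisers and read $\theta$-action from $w$. Two corrections are needed. First, ``since $\Out(E_7)=1$, $\theta|_\fh$ is an inner automorphism of $\fh$'' is false: $\theta$ being inner in $\Aut(\Lg)$ says nothing about $\theta|_\fh\in\Aut(\fh)$, and indeed in case (1) $\theta|_\fh$ is \emph{outer}, cyclically permuting the three $A_1$ factors. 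You do not actually use this claim in what follows, but it should be removed. Second, the relevant parabolics of $W$ here are of rank \emph{one}, not two: $\dim\fc=3$ and $\dim\fc_t=2$ means the pointwise stabiliser is generated by a single pseudo-reflection; ``rank two'' is correct only for the $E_6$ grading $2_a$, where $\dim\fc=4$. Finally, a simplification: since $w$ acts on all $126$ roots of $E_7$ with $42$ orbits of full length $m=3$, the structure constants $c_\alpha$ and a comparison with $\dim\Lg_i$ are not needed in this case---\autoref{lem-root spaces}(2) together with the action of $w$ on the Cartan of $\fh$ already determines $\dim\Lh_i$---and in case (1) the cyclic permutation of the three $A_1$'s is read off directly from $\theta(1111000)=0011000$, $\theta(0011000)=-1224321$, which is more direct than your proposed check on the isomorphism type of $\fh_0$.
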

\begin{coro}Let $\cF$ be a cuspidal character sheaf on $\Lg_1$ and $\cS$ the supporting stratum of $\cF$. Let $x_s+x_n\in\cS$. Then $Z_\Lg(x_s)_{\on{der}}:=\fh_{x_s}$ is of type $A_1\times A_1\times A_1$ and $\theta|_{Z_\Lg(x_s)_{\on{der}}}$ permutes the three factors of $A_1$. Moreover, $x_n\in (\fh_{x_s})_1^{\on{nil}}$ lies in the open dense $H_0$-orbit, where $H_0=Z_G(x_s)_{\on{der}}^\theta$.
\end{coro}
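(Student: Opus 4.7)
My plan is to deduce the corollary from the preceding proposition by excluding case (2) via the bi-orbital necessary condition recalled in~\autoref{ssec:strategy}. Let $\cF$ be a cuspidal character sheaf on $\Lg_1$ with supporting stratum $\cS$, and write $x = x_s + x_n \in \cS$, so that $\fh_{x_s} = Z_\fg(x_s)_{\on{der}} = Z_\fg(\fc_t)_{\on{der}}$ where $\fc_t \subset \fc$ is the stratum containing $x_s$. The general identity $\dim f(\cS) = \dim \Lg_1 - \dim \Lg_0 = r$ from~\autoref{ssec:strategy}, together with the value $r = 2$ for the grading $3_a$ of $E_7$ (consistent with~\autoref{table 1} and with the hypothesis $\dim f(\fc_t) = 2$ of the preceding proposition), forces $(\fh_{x_s}, \theta|_{\fh_{x_s}})$ to fall in one of its two listed cases.

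Next I would rule out case (2). There $\fh_{x_s}$ is of type $A_2$ with the order-$3$ GIT-stable (principal/Coxeter) grading; an explicit inspection shows $\dim (\fh_{x_s})_0 = 2$ (the Cartan subalgebra) while $\dim (\fh_{x_s})_1 = 3$ (spanned by the two simple-root vectors together with the lowest-root vector). Since a bi-orbital cuspidal character sheaf requires $\dim \Lg_0 = \dim \Lg_1$ by~\autoref{ssec:strategy}, no such sheaf exists on $(\fh_{x_s})_1$, so the Levi condition~\eqref{eqn-levi} is not satisfied and this stratum carries no cuspidal character sheaf on $\Lg_1$.

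Hence we are in case (1): $\fh_{x_s} \cong \mathfrak{sl}_2^{\oplus 3}$ with $\theta$ cyclically permuting the three summands, $H_0 \cong \SL_2$ (the diagonal), and $(\fh_{x_s})_0 \cong (\fh_{x_s})_1 \cong \mathfrak{sl}_2$ (identifying $(\fh_{x_s})_1$ with the $\zeta_3$-eigenspace $\{(X, \zeta_3 X, \zeta_3^2 X) \mid X \in \mathfrak{sl}_2\}$), both of dimension $3$, with $H_0$ acting on $(\fh_{x_s})_1$ by the adjoint representation. Applying the spiral-induction recipe of~\autoref{ssec:method} with $\cO_L$ the regular nilpotent orbit of $\SL_2^3$ (whose intersection with $(\fh_{x_s})_1$ is the unique open nilpotent $H_0$-orbit in $(\fh_{x_s})_1^{\on{nil}}$), one checks that there is a bi-orbital cuspidal character sheaf supported on this open orbit; consequently $x_n$ lies there, yielding both assertions of the corollary. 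The main obstacle I anticipate is precisely this last existence check for case (1): although the calculation is of rank one, one must treat the cyclic permutation carefully, and the cleanest execution is to identify the datum $(\SL_2^3, \theta_{\mathrm{cyc}})$ with a cyclic-quiver datum in the family $\bA$ and invoke~\autoref{thm:bisup-classical}.
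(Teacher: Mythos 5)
Your proof follows the same strategy as the paper's (implicit) argument for this corollary: as in the analogous $E_6$ corollary, one observes that of the two cases in the preceding proposition, only case (1) yields a Levi whose derived graded algebra affords a bi-orbital cuspidal character sheaf, and the general constraint $\dim f(\cS) = r = 2$ from~\autoref{ssec:strategy} forces $\dim f(\fc_t) = 2$, placing one in that proposition. Your elimination of case (2) by the count $\dim (\fh_{x_s})_0 = 2 \neq 3 = \dim (\fh_{x_s})_1$ is exactly the right reason. The one minor misstep is the closing suggestion to realise $(\SL_2^{\times 3}, \theta_{\mathrm{cyc}})$ as a cyclic-quiver datum of family $\bA$ and invoke~\autoref{thm:bisup-classical}: that theorem treats almost simple $G$ (hence $\SL(V)$, not $\SL_2^{\times 3}$) and requires $m \geq 2$, so it does not directly apply. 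The correct justification for case (1) is that, since $\theta^3 = \id$ on $\fh$, the eigenspace identification reduces $(\fh_0 \curvearrowright \fh_1)$ to the adjoint action of $\SL_2$ on $\mathfrak{sl}_2$, and the required input for the supercuspidal recipe of~\autoref{ssec:method} is Lusztig's classical cuspidal pair on the spiral Levi $\fl \cong \mathfrak{sl}_2$ (regular nilpotent with the sign local system); this yields the unique bi-orbital cuspidal character sheaf supported on the open dense nilpotent $H_0$-orbit, as claimed.
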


We have $\dim\Lg_0=43$, and $\dim\Lg_i=45$, $i=1,2$. Let $\theta=\on{Int}n_w$, where 
$$w=s_{\alpha_0}s_{\alpha_2}s_{\alpha_6}s_{\alpha_1}s_{\alpha_4}s_{\alpha_7} \text{ (of type $3A_2$)}.$$
We have
\bern
w:&&\alpha_1\mapsto1234321,\alpha_2\mapsto\alpha_4,\ \alpha_3\mapsto -1123321,\ \alpha_4\mapsto-0101000,\\ 
&&\alpha_5\mapsto0101110,\ \alpha_6\mapsto\alpha_7,\ \alpha_7\mapsto-0000011.
\eern
Moreover,
\bern
&&\Lt_1=\on{span}\{v_1=({
  {1, 0, 1-\zeta_3, 2, 1-\zeta_3, 0, 1+\zeta_3}
 }), v_2=({
  {0, 1, 0, -\zeta_3, 0, 0, 0}
 }), \\&&\hspace{2in}v_3=({
  {0, 0, 0, 0, 0, 1, -\zeta_3}
 })\}.
\eern
The roots form $42$ orbits of size $3$ under the action of $w$. 
We can find a set of generators for $W\cong G_{26}$ as follows
\beqn
w_1=s_{1111000}s_{0011000}s_{1224321}, \,w_2=s_{1223210}s_{1111100},\,w_3=s_{1122210}s_{1011100}
\eeqn
with $w_1^2=1$, $w_2^3=w_3^3=1$.

Suppose that $\fc_t\subset\fc$ is a stratum such that $\dim f(\fc_t)=2$. Then $Z_\Lg(\fc_t)$ is conjugate to either $Z_\Lg(\fc_{w_1})$ or $Z_\Lg(\fc_{w_2})$, where $\fc_{w_i}$ is the subspace fixed by $w_i$.

We check that
\bern
&&\fc_{w_1}=\on{span}\{v_1 -\zeta_3 v_2, v_3\},\ \fc_{w_2}=\on{span}\{ v_1-\zeta_3 v_3, v_2 + (1+\zeta_3) v_3\}
\eern
and
\bern
&&Z_\Lg(\fc_{w_1})=\Lt\oplus\on{span}\{X_{\pm1111000},X_{\pm0011000},X_{\pm1224321}\}\text{ (type $A_1\times A_1\times A_1$)}\\
&&Z_\Lg(\fc_{w_2})=\Lt\oplus\on{span}\{X_{\pm1223210},X_{\pm0112110},X_{\pm1111100}\}\text{ (type $A_2$)}
\eern
where
\bern
&&\theta({1111000})={0011000},\ \theta({0011000})={-1224321},\\
&&\theta(1111100)=0112110,\ \theta(0112100)=-1223210.
\eern
Thus we have
\begin{enumerate}
\item in the first case  $\theta$ permutes the three factors of $A_1$ with $\dim\Lh_i=3$, $i=0,1,2$, where $\Lh:=Z_\Lg(\fc_{w_1})_{\on{der}}$. 
\item in the second case $\theta|_{\Lh}$ as the order 3 GIT stable grading with $\dim\Lh_0=2$, $\dim\Lh_1=\dim\Lh_2=3$, where $\Lh:=Z_\Lg(\fc_{w_2})_{\on{der}}$.
\end{enumerate}

\begin{coro}\label{coro:E7}
We have $N_{G_0}(\fc_{w_1})/Z_{G_0}(\fc_{w_1})\cong G_5$. Moreover,~\autoref{theo:cuspidal-cartan} holds in this case.
\end{coro}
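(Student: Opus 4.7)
First I would identify $N_{G_0}(\fc_{w_1})/Z_{G_0}(\fc_{w_1})$ with a subquotient of the little Weyl group $W := W(G_0,\fc)\cong G_{26}$. Since $w_1$ is an involution on $\fc$ whose fixed subspace $\fc_{w_1}$ has codimension one, it acts as an honest order-$2$ complex reflection of $G_{26}$, and $\fc_{w_1}$ is therefore a reflecting hyperplane of $W$ lying in the $W$-orbit of the $9$ order-$2$ hyperplanes (as opposed to the $6$ order-$3$ hyperplanes of $G_{26}$). Given any $g\in N_{G_0}(\fc_{w_1})$, the subspace $g\fc g^{-1}$ is a Cartan subspace of the graded reductive Lie algebra $Z_\fg(\fc_{w_1})$; the conjugacy of Cartan subspaces under $Z_{G_0}(\fc_{w_1})$ allows one to adjust $g$ so that it lies in $N_{G_0}(\fc)$ without altering its coset. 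This yields the identification
\[
N_{G_0}(\fc_{w_1})\big/Z_{G_0}(\fc_{w_1})\cong \on{Stab}_W(\fc_{w_1})\big/\on{Fix}_W(\fc_{w_1}).
\]

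Next I would compute this subquotient. Steinberg's theorem applied to the complex reflection group $W=G_{26}$ identifies $\on{Fix}_W(\fc_{w_1})$ with the parabolic subgroup of $W$ generated by reflections fixing $\fc_{w_1}$ pointwise, which in our case is just $\langle w_1\rangle$. Consulting the tables of~\cite{ABDR18} one reads off that the setwise-stabiliser quotient of an order-$2$ reflecting hyperplane of $G_{26}$ is isomorphic to $G_5$ (order $72$, degrees $6, 12$). The one point requiring care is to distinguish this from the order-$3$ hyperplane case, which would produce a different quotient; this is ensured by the explicit verification above that $w_1$ acts on $\fc$ as an honest order-$2$ reflection rather than as an involution with a larger $-1$-eigenspace. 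This proves the first assertion of the corollary.

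Finally I would verify the four parts of~\autoref{theo:cuspidal-cartan} in this case. Part (1), uniqueness of $L$ up to $G_0$-conjugacy, is provided by the classification performed above: of the two codimension-one $\fc$-strata $\fc_{w_1}, \fc_{w_2}$, only the Levi $Z_\fg(\fc_{w_1})_{\on{der}}$ (of type $A_1\times A_1\times A_1$ with $\theta$ cyclically permuting the three factors) admits a bi-orbital cuspidal character sheaf, since the $A_2$-Levi attached to $\fc_{w_2}$ inherits the order-$3$ GIT-stable grading on $\fsl_3$ for which $\dim\fl_1>\dim\fl_0$, ruling out any bi-orbital cuspidal. Part (2) is immediate from $\dim\fc_{w_1}=2=\dim\fg_1-\dim\fg_0$ and the conjugacy of cuspidal Cartan subspaces furnished by (1). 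Part (3) combines the isomorphism $W_\fd\cong G_5$ just proved with the standard fact (Steinberg) that for any subspace $V'\subseteq V$ in the reflection representation of a complex reflection group $W$, the natural map $V'/\!/W_{V'}\hookrightarrow V/\!/W$ is a closed immersion; together with the Chevalley isomorphism $\fc/\!/W\cong \fg_1/\!/G_0$ this yields the desired closed immersion $\fd/\!/W_\fd\hookrightarrow \fg_1/\!/G_0$. Part (4) is the general description of the support of a cuspidal character sheaf from~\autoref{sec:overview}. The main delicate step is the second one: correctly locating the relevant entry in the tables for $G_{26}$.
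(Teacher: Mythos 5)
Your argument is correct and follows essentially the same route as the paper: both reduce the computation to the little Weyl group $W\cong G_{26}$, identify the pointwise stabiliser of the hyperplane $\fc_{w_1}$ as $\Z/2$ (the paper phrases this as ``$Z_K(\fc_{w_1})$ is of type $A_1$''), and then read off $W_\fd\cong G_5$ from Table~1 of~\cite{ABDR18}. One small numerical slip: $G_{26}$ (degrees $6,12,18$) has $12$, not $6$, reflecting hyperplanes of order $3$, since $\sum(d_i-1)=33=9\cdot 1+12\cdot 2$; this count plays no role in your argument, which only needs that $\fc_{w_1}$ lies in the order-$2$ orbit, and that is guaranteed by $w_1^2=1$ together with the fact that hyperplane stabilisers in $G_{26}$ have order $2$ or $3$.
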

\begin{proof}
The Weyl group of $(G_0,\Lg_1)$ is the complex reflection group $G_{26}$. Note that $Z_K(\fc_{w_1})$ is of type $A_1$.  The corollary follows from~\cite[Table 1]{ABDR18}.
\end{proof}

\begin{rema}
We note that in case (1), we have $$Z(H)^\theta=Z(G)=\langle\check\alpha_2(-1)\check\alpha_5(-1)\check\alpha_7(-1)\rangle\cong\mu_2$$ and it acts non-trivially on the biorbital cuspidal character sheaf.
\end{rema}
Let $\Lh=Z_\Lg(\fc_{w_1})_{\on{der}}$. Let $x_0=X_{\alpha}+\zeta_3c_\alpha X_{\theta\alpha}+\zeta_3^2 c_\alpha c_{\theta\alpha}X_{\theta^2\alpha}\in\Lh_{-1}^{\on{nil}}$, where $\alpha=1111000$. We see that
\beqn
\fc_{w_1}+X_{\alpha}+\zeta_3^2c_\alpha X_{\theta\alpha}+\zeta_3 c_\alpha c_{\theta\alpha}X_{\theta^2\alpha}\subset Z_{\Lg_1}(x_0).
\eeqn
We conclude that $\cS=\widecheck{\cO}_{x_0}$, where $\cS=\on{supp}\cF$ for cuspidal character sheaves $\cF$. Let $y_0=X_{-\alpha}+\zeta_3^2c_{-\alpha} X_{-\theta\alpha}+\zeta_3 c_{-\alpha} c_{-\theta\alpha}X_{-\theta^2\alpha}\in\Lh_{1}$. Then $[x_0,y_0]=h_\alpha+h_{\theta\alpha}+h_{\theta^2\alpha}=-h_2-2h_4-3h_5-2h_6-h_7:=h_0\in\Lh_0$, where $h_i=h_{\alpha_i}$, and $\phi=\{x_0,y_0,h_0\}$ is a normal $\mathfrak{sl}_2$-triple. 

Let $\Lg^\phi=Z_\Lg(x_0)\cap Z_\Lg(h_0)$. Then we can check that $\Lg^\phi$ is of type $F_4$ by showing that $h_0$ is $G$-conjugate to $2h_1+3h_2+4h_3+6h_4+5h_5+7h_6+3h_7$ , so that the weighted Dynkin diagram is $\begin{array}{cccccc}0&0&0&0&0&2\\&&0&&\end{array}$(see, for example, tables in~\cite[\S13.1]{C93}).
Since $\fc_{w_1}\subset\Lg^\phi$ has dimension 2, we conclude that $\theta|_{\Lg^\phi}$ is a GIT stable $\bZ/3\bZ$-grading.

The nilpotent orbits in $\Lg_1$ (equivalently, $\Lg_{-1}$) has been classified in~\cite{GT}. We can identify the orbit $\cO_{x_0}$ with the orbit \#71 in~\cite[Table 8]{GT} since $\Lg_0^\phi$ is of type $2A_2$. The uniqueness of such a nilpotent orbit then follows from loc. cit.

\printbibliography

\end{document}